\titleformat{\section}{\centering\large}{\thesection.\ }{0em}{\MakeUppercase}[\titlerule]
\titleformat{\subsection}{\bf\centering\normalsize}{\thesubsection.\ }{0em}{}
\titleformat{\subsubsection}{\bf\centering\small}{}{0em}{}
\tikzset{
	base/.style={draw, 
		align=center, minimum height=2ex},
	box/.style={base, rectangle, text width=2.5em, fill=red!50,font={\scriptsize}},
	upline/.style={line width=1.5pt}
}
\newcommand{\orbitthree}[9]{%
	\def\xstep{2}
	\def\x0{0}
	\def\y0{0}
	\coordinate (a1) at (\x0+#8*\xstep,\y0+#9*\xstep);
	\coordinate (a2) at (\x0+\xstep+#8*\xstep,\y0+#9*\xstep);
	\coordinate (a3) at (\x0+2*\xstep+#8*\xstep,\y0+#9*\xstep);
	\node[above left = 1em  and 3em of a1] (10) {$1_0$};
	\node[below left = 1em  and 3em of a1] (20) {$2_0$};
	\node[above right = 1em  and 3em of a3] (1inf) {$#1_\infty$};
	\node[below right = 1em  and 3em of a3] (2inf) {$#2_\infty$};
	
	\draw [upline](10)--(a1)node [pos=0.5, above =0.1] {$#3$};
	\draw (20)--(a1)node [pos=0.5, below=0.1] {$#4$};
	
	\draw (a1) .. controls (\x0+0.5*\xstep+#8*\xstep,\y0+0.25*\xstep+#9*\xstep)  .. (a2)
	(a2) .. controls (\x0+1.5*\xstep+#8*\xstep,\y0-0.25*\xstep+#9*\xstep)  .. (a3)
	(a3)--(1inf);
	
	\draw [upline]
	(a1) .. controls (\x0+0.5*\xstep+#8*\xstep,\y0-0.25*\xstep+#9*\xstep)  .. (a2)
	(a2) .. controls (\x0+1.5*\xstep+#8*\xstep,\y0+0.25*\xstep+#9*\xstep)  .. (a3)
	(a3)--(2inf);
	
	\draw node at (a1)[below=0.05]{$#5$};
	\draw node at (a2)[below=0.05]{$#6$};
	\draw node at (a3)[below=0.05]{$#7$};
}
\newtheorem{theorem}{Theorem}[section]
\newtheorem{lemma}[theorem]{Lemma}
\newtheorem{proposition}[theorem]{Proposition}
\newtheorem{corollary}[theorem]{Corollary}
\newtheorem{example}[theorem]{Example}
\newtheorem{remark}[theorem]{Remark}
\def\Z{\mathbb{Z}}
\def\Q{\mathbb{Q}}
\def\P{\mathbb{P}}
\def\E{\mathscr{E}}
\DeclareMathOperator{\Gal}{Gal}
\DeclareMathOperator{\myspan}{span}
\DeclareMathOperator{\diag}{diag}
\newcommand{\matr}[4]{\left(\begin{array}{cc}%
		#1 & #2 \\%
		#3 & #4 \\ %
	\end{array}\right)}
\def\RLL[#1]{R(\mathcal{L}(O_#1))}
\def\PO[#1]{\mathcal{P}(O_#1)}
\title{Geometry of the del Pezzo surface $y^2=x^3+Am^6+Bn^6$}
\author{Julie Desjardins}
\address{Mathematical and Computational Sciences\\ 
	University of Toronto Mississauga\\
	Deerfield Hall\\
	Mississauga, ON L5L 3E2\\
	Canada}
\email{julie.desjardins@utoronto.ca}
\author{Bartosz Naskręcki}
\address{Faculty of Mathematics and Computer Science\\
	Adam Mickiewicz University in Poznań\\
	ul. Uniwersytetu Poznańskiego 4 \\
	61-614, Poznań\\ 
	Poland}
\address{Mathematical Institute\\
	Polish Academy of Sciences\\ 
	ul. Śniadeckich 8\\
	00-656, Warszawa\\
	Poland}
\email[B. Naskręcki]{bartosz.naskrecki@amu.edu.pl}
\subjclass[2010]{14G05, 14J26, 14J27, 14D10, 11G05}
\keywords{del Pezzo surfaces; density of rational points; elliptic surfaces; Mordell-Weil groups}
\begin{document}
	
\tikzset{
	level distance=2cm,
	level 1/.style={sibling distance=1em,level distance =2cm},
	blank/.style={draw=none},
	edge from parent/.style=
	{->,-Latex, draw,edge from parent path={(\tikzparentnode) -- (\tikzchildnode)}}
}

\tikzset{
	base/.style={draw, 
		align=center, minimum height=2ex},
	proc/.style={base, rectangle, text width=2.5em, fill=red!50,font={\scriptsize}},
	test/.style={base, diamond, aspect=2, text width=7em, fill=blue!30,font={\scriptsize}},
	test2/.style={base, diamond, aspect=3.5, text width=6.5em,text height=0.5em, fill=blue!30,font={\tiny}},
	start/.style={proc, rounded corners,, fill=green!30},
	coord/.style={coordinate, on chain, on grid, node distance=6mm and 25mm},
	nmark/.style={draw, cyan, circle, font={\sffamily\bfseries}},
	norm/.style={->, draw, lcnorm},
	free/.style={->, draw, lcfree},
	cong/.style={->, draw, lccong},
	it/.style={font={\small\itshape}}
}

\begin{abstract}In this paper, we give an effective and efficient algorithm which on input takes non-zero integers $A$ and $B$ and on output produces the generators of the 
	Mordell-Weil group of the elliptic curve over $\Q(t)$ given by an equation of the form $y^2=x^3+At^6+B$. 
	Our method uses the correspondence between the 240 lines of a del Pezzo surface of degree 1 and the sections of minimal canonical height on the corresponding elliptic surface over $\overline{\Q}$.

For most rational elliptic surfaces, the density of the rational points is proven by various authors, but the results are partial in case when the surface has a minimal model that is a del Pezzo surface of degree 1. 
In particular, the ones given by the Weierstrass equation $y^2=x^3+At^6+B$, are among the few for which the question is unsolved, because the root number of the fibres can be constant. Our result proves the density of the rational points in many of these cases where it was previously unknown.
\end{abstract}
\maketitle

\setcounter{tocdepth}{2}
\tableofcontents

\section{Introduction}
\subsubsection*{Del Pezzo surfaces and rational elliptic surfaces}
A del Pezzo surface over a field $k$ is a smooth, projective, geometrically integral surface $X$ over $k$ with an ample anticanonical divisor $-K_X$. Del Pezzo surfaces are classified by their degree $d:=K_X^2$, an integer $1\leq d\leq9$. It has been proven that if $X$ is not geometrically $\overline{k}$-isomorphic to $\P^1\times\P^1$, then $X$ is $\overline{k}$-isomorphic to the blowup of $\P^2$ in $9-d$ points in ''general position'', cf. \cite[Chap. 8.1]{Dolgachev_geometry}.

An elliptic surface over $k$ with base $\P^1$ is a smooth projective surface $\E$ together with a map $\pi:\E\rightarrow\P^1$ such that the general fibre of $\pi$ is a smooth connected curve $E$ of genus one. 
We assume that $\pi$ has a section: $\E$ is thus a family of elliptic curves away from finitely many fibres, and admits a Weierstrass equation as an elliptic curve over $k(t)$. We say that $\E$ is rational if it is birational to $\P^2$. 

A rational elliptic surface is $\overline{k}$-isomorphic to $\P^2$ blown-up in the fundamental locus of a pencil of plane cubics: thus by choosing carefully $d$ points on a del Pezzo surface, one obtains a rational elliptic surface $\E$. 

In particular if $d=1$, then the blowup of the base point of the anticanonical linear system of $X$ allows to obtain a rational elliptic surface $\E$. In terms of equation: $X$ is isomorphic to a smooth sextic hypersurface in the weighted projective space $\P^1(1,1,2,3)$ defined by the equation $y^2=x^3+F(m,n)x+G(m,n)$, with $F,G$ homogeneous polynomials of degree respectively 4 and 6 (\cite[Theorem III.3.5]{Kollar}). The converse is also true.
The base point of the anticanonical system is $P:=[0,0,1,1]$: and the blow-up of $X$ at $P$ gives the surface $\E$ in $\P^3$ with equation $y^2=x^3+f(t)x+g(t)$ where $f(t)=F(t,1)$ and $g(t)=G(t,1)$. Then $\E$ is a rational elliptic surface (\cite{Miranda}).

A rational elliptic surface obtained through this process has only irreducible fibres (of Kodaira type $I_1$ or $II$). Moreover, it reaches the maximum of 8 independent rational sections over $\overline{k}$ of the Mordell-Weil lattice and the torsion subgroup of the generic fibre is trivial, cf. \cite{Oguiso_Shioda}.

In this paper, we are interested in certain rational elliptic surfaces $\E_{G}$ with a generic fibre of the form $$E_G:y^2=x^3+G(t),$$ where $G(t)\in\Z[t]$ is such that $1\leq\deg G(t)\leq 6$.
These elliptic surfaces have many particularities:
\begin{enumerate}
	\item if $G(t)$ is a squarefree polynomial of degree $\geq5$, then the contraction of the image of the zero section gives a del Pezzo surface of degree 1, cf. \cite[\S 8.8.3]{Dolgachev_geometry};
	\item $\E=\E_G$ is isotrivial, i.e. its fibres $\E_t$ are isomorphic to one another. Indeed, the $j$-invariant function $t\rightarrow j(\E_t)$ equals $0$. 
\end{enumerate}

\subsubsection*{Main theorem}

\begin{theorem}\label{thmdescription}
	Let $A,B\in\Z$ be non-zero rational integers and define $\E=\E_{A,B}$ to be the elliptic surface given by the equation \begin{equation}\label{eq:our_surfaces}
		E_{A,B}:y^2=x^3+At^6+B.
	\end{equation} 
	The rank of the generic fibre $E_{A,B}$ over $\overline{\mathbb{Q}}(t)$ is $8$ and the rank $r_{\mathcal{E}}$ of the group $E_{A,B}(\mathbb{Q}(t))$ is at most equal to $3$. There exists an effective and efficient algorithm which on input takes two non-zero integers $A$ and $B$ and produces on output the generators of the group $E_{A,B}(\mathbb{Q}(t))$.
\end{theorem}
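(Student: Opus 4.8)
The plan is to prove the three assertions separately, relying on the isotriviality and the special arithmetic of the curve $y^2=x^3+c$.

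\medskip

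First, the rank over $\overline{\Q}(t)$. The surface $\E_{A,B}$ is a rational elliptic surface (since $\deg(At^6+B)=6$), so by the Shioda--Tate formula the Mordell--Weil rank over $\overline{\Q}(t)$ equals $8$ minus the sum of the contributions $(m_v-1)$ of the reducible fibres. I would run Tate's algorithm on $y^2=x^3+At^6+B$: at the roots of $At^6+B$ the discriminant $-432(At^6+B)^2$ vanishes to order $2$ with $v(x\text{-coeff})=v(1)=0$, giving type $II$ (for a generic choice), and at $t=\infty$, after the change of variables $t\mapsto 1/s$, $x\mapsto x/s^2$, $y\mapsto y/s^3$ one gets $y^2=x^3+A+Bs^6$, again a fibre of type $II$. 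Hence all fibres are irreducible, the Shioda--Tate sum is zero, and the geometric rank is $8$. (This also recovers point (1) from the introduction.)

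\medskip

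Next, the bound $r_{\mathcal E}\le 3$ on the rank over $\Q(t)$. The key structural fact is that the base change $t\mapsto t$, combined with the automorphisms $x\mapsto \zeta_3 x$ and $(x,y)\mapsto (\zeta_3^{a} x, \pm y)$, makes $E_{A,B}/\overline{\Q}(t)$ a curve with complex multiplication by $\Z[\zeta_3]$, so the geometric Mordell--Weil lattice of rank $8$ is a module over $\Z[\zeta_3]$ of rank $4$, and it carries an action of $\mu_6$ coming from $(t,x,y)\mapsto(\zeta_6 t, x, y)$ (note $t^6$ is invariant). I would identify this lattice explicitly — it should be $E_8$ with its standard $\Z[\zeta_3]$-structure, and the order-$6$ symmetry acts on it — and then observe that $E_{A,B}(\Q(t))$ sits inside the sublattice fixed by a suitable twisted Galois action. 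A cleaner route: the three quadratic/cubic twist surfaces obtained by specializing the shape $y^2=x^3+A t^6+B$ (namely $y^2=x^3+B$, $y^2=x^3+A+B$ after $t=1$, and the behaviour at $t=\infty$) give at most rank-$1$ contributions each in a decomposition of $E_{A,B}(\Q(t))\otimes\Q$ into Galois-eigenspaces under $\mathrm{Gal}(\overline\Q/\Q)$ acting on the $E_8$-lattice through a finite quotient; counting invariants of this action on $E_8\otimes\Q$ compatible with the CM structure yields the bound $3$. I expect this eigenspace/representation-theoretic bookkeeping to be the main obstacle: one must pin down exactly how $\mathrm{Gal}(\overline\Q/\Q)$ permutes a chosen set of generators (equivalently, the $240$ lines of the del Pezzo surface) and check that the rational invariant subspace has dimension at most $3$, with the extreme cases controlled by when $A$, $B$, $-3$, $AB$, etc. are squares or cubes.

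\medskip

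Finally, the algorithm. Using the correspondence (announced in the abstract) between the $240$ lines on the degree-$1$ del Pezzo surface and the $240$ sections of minimal Shioda height $2$ on $\E_{A,B}$ over $\overline\Q$, I would write these $240$ sections down explicitly as points $(x(t),y(t))$ with $x,y$ of bounded degree — they are the solutions of an explicit system coming from requiring $x(t)^3+At^6+B$ to be a square in $\overline\Q[t]$ with $\deg x\le 2$ — parametrized by the cube roots and the arithmetic of $A,B$. The Galois group $\mathrm{Gal}(\overline\Q/\Q)$ acts on this finite explicit set; computing the orbits and the corresponding Galois-stable $\Q$-rational combinations produces a generating set for $E_{A,B}(\Q(t))$ up to finite index, and saturating (using the height pairing on the known $E_8$ lattice, whose Gram matrix is fixed) makes it a genuine generating set. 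Effectivity and efficiency follow because everything reduces to: factoring $A$ and $B$, testing square/cube conditions on a fixed finite list of monomials in $A,B$, and linear algebra over $\Q$ with the fixed $E_8$ Gram matrix; no search or height bound that grows with $A,B$ is needed. The hardest implementation point is ensuring the output is saturated — i.e. that one has the full group and not a finite-index subgroup — which the known lattice structure of the geometric Mordell--Weil group makes decidable.
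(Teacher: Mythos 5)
Your first step (geometric rank $8$) is essentially the paper's argument: rational elliptic surface, Tate's algorithm showing all singular fibres are irreducible of type $II$ and the fibre at infinity smooth, then Shioda--Tate and the Oguiso--Shioda classification giving the $E_8$ lattice with minimal height $2$ and no torsion. Your third step (write down the $240$ height-$2$ sections, let $\Gal(\overline{\Q}/\Q)$ act, extract rational combinations, then saturate with the height pairing) is also the paper's strategy in spirit, and you correctly flag saturation as the delicate point.

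The genuine gap is in the middle step, the bound $r_{\mathcal{E}}\leq 3$, which is the heart of the theorem. Neither of the two mechanisms you sketch actually produces it. The $\Z[\zeta_3]$-module structure on the geometric Mordell--Weil lattice only tells you the lattice has $\mathcal{O}$-rank $4$; since the CM is not defined over $\Q$, the naive invariant-counting for an element conjugating $\zeta_3$ gives a fixed subspace of dimension up to $4$, not $3$. Your alternative route via the ``three twist surfaces'' at $t=0$, $t=1$, $t=\infty$ is not a valid bound either: specializations give injections of $E_{A,B}(\Q(t))$ into fibres, not a decomposition into rank-$\leq 1$ pieces, and nothing forces those fibral groups to control the generic rank by $3$. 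What the paper actually does is decompose $E_{A,B}(\overline{\Q}(t))\otimes\Q$ into \emph{four} explicit $2$-dimensional Galois-stable planes $V_1,\dots,V_4$, spanned by $P,P^{\sigma}$; $Q,Q^{\sigma}$; $R+S,(R+S)^{\sigma}$; $R-S,(R-S)^{\sigma}$, where $P,Q,R,S$ are concrete height-$2$ sections from the three smallest orbits ($O_1$, $O_2$, $O_3$ of sizes $6,6,12$), whose span has finite index (here $81$) in the full lattice. On each plane the relevant order-$3$ substitution ($\sqrt[3]{B}\mapsto\zeta_3\sqrt[3]{B}$, $\sqrt[3]{A}\mapsto\zeta_3\sqrt[3]{A}$, $\sqrt[3]{4AB}\mapsto\zeta_3\sqrt[3]{4AB}$) acts with characteristic polynomial $1+x+x^2$, so $V_i$ meets $E_{A,B}(\Q(t))\otimes\Q$ nontrivially only if $B$, $A$, resp.\ $4AB$ is a cube, and then in dimension at most $1$ (with the square/$-3\times$square conditions on $A,B$ deciding which vector is rational). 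The bound $3$ rather than $4$ then comes from the elementary incompatibility that $A$, $B$ and $4AB$ cannot all be cubes (else $4$ would be a cube), so at most three of the four planes can contribute. Without this explicit decomposition and the cube-condition incompatibility, your sketch does not establish $r_{\mathcal{E}}\leq 3$, and the same explicit planes are what make the algorithm in your third step effective (the yes/no queries are exactly the cube and square tests on $A$, $B$, $4AB$).
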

\begin{remark}
	The algorithm of Theorem \ref{thmdescription} is effective and efficient in the following sense. For two integers $A,B$ on the input, in order to compute the rank of the group $E_{A,B}(\mathbb{Q}(t))$, the algorithm only requires the answer to the following questions
	\begin{enumerate}[(1)]
		\item Is $A$ (resp. $B$) a square or $-3$ times a square?
		\item Is $A$ (resp. $B$, resp. $4AB$) a cube?
	\end{enumerate}
	For each task in the list there is an algorithm which given the input integer $N$ returns the answer in $O(log(N)^{1+o(1)})$ steps, \cite{Bernstein_roots}.
\end{remark}

We denote by $r_{\mathcal{E}}$ the rank of the group $E_{A,B}(\mathbb{Q}(t))$ and call it the \textit{generic rank} of $\E$ over $\mathbb{Q}$.

\begin{figure}[htb]
	\centering
	\begin{tikzpicture}[sibling distance=2.2em,level distance=1.7cm]
		\Tree [.\node[start]{START};
		[.\node[test2]{$[\mathbb{Q}(\sqrt{A},\zeta_3):\mathbb{Q}]=4$}; 
		\edge node[auto=right]{YES};
		[.\node[proc]{$r\leq 2$};
		]
		\edge node[auto=left]{NO}; 
		[.\node[test2]{$[\mathbb{Q}(\sqrt{B},\zeta_3):\mathbb{Q}]=4$}; 
		\edge node[auto=right]{YES};  
		\node[proc]{$r\leq 2$};
		\edge node[auto=left]{NO};
		[.\node[test2]{$4AB$ is a cube?};
		\edge node[auto=right]{YES};  
		[.\node(Acub)[test2]{$A$ is a cube?};
		\edge node[auto=right]{YES};
		\node[proc]{$r=3$};
		\edge node[auto=left]{NO};  
		[.\node(Bcub)[test2]{$B$ is a cube?};
		\edge node[auto=right]{YES};  
		\node[proc]{$r=3$};
		\edge node[auto=left]{NO};  
		\node[proc]{$r=2$};
		]
		]	
		\edge node[auto=left]{NO};  
		\node[proc]{$r\leq 2$};	
		]
		] 
		]
		]
	\end{tikzpicture}
	\caption{Simplified version of the rank decision algorithm. Each diamond box is a query with possible yes or no answer. Red boxes explain what is the value of the generic rank $r=r_{\mathcal{E}}$ after each step.}\label{fig:Diagram_simp}
\end{figure}
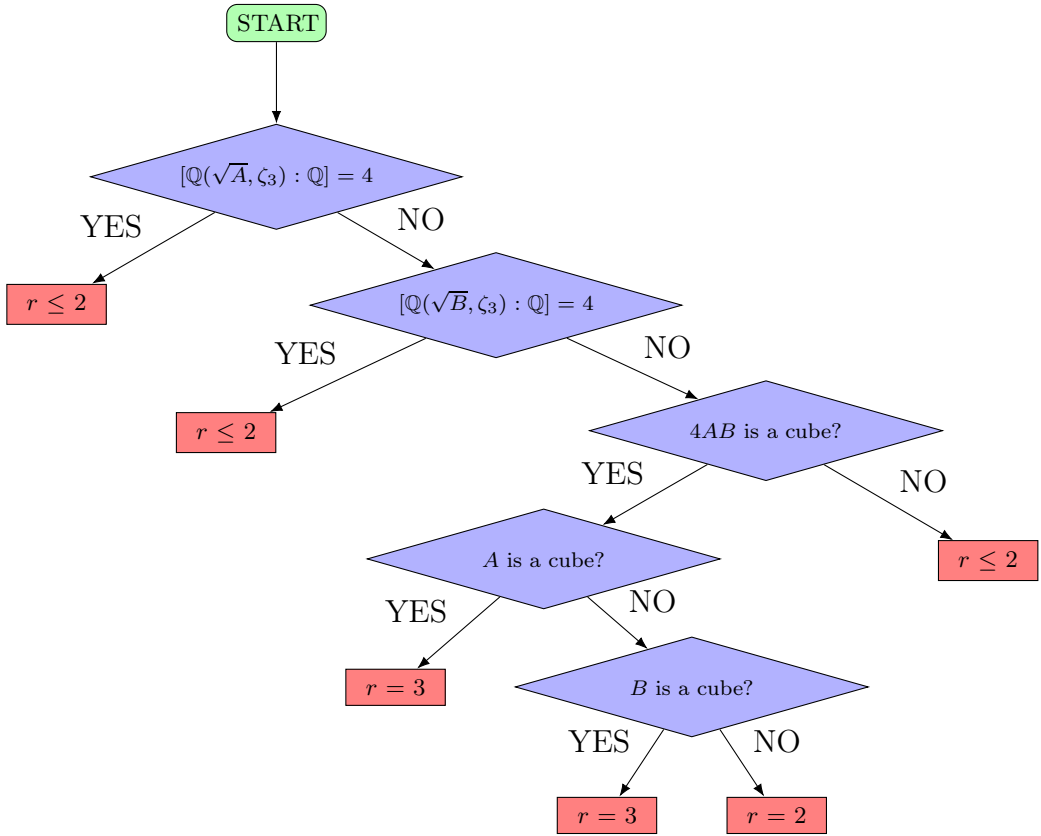

For instance, in order for the group $E_{A,B}(\mathbb{Q}(t))$ to have the maximum possible rank ($r_\E=3$), the coefficients $A$ and $B$ need both to be either a square or $-3$ times a square, $4AB$ needs to be a cube and either $A$ or $B$ needs to be a cube as well. That follows directly from the algorithm presented in Figure \ref{fig:Diagram_simp}. A detailed proof of the algorithm and the further steps which resolve questions about the generic rank $r\leq 2$ are described in Section \ref{sec:proof_main_theorem}.

\begin{theorem}
	Let $\E$ be the elliptic surface given by the equation $y^2=x^3+At^6+B$. Then
	$\E$ has generic rank $r_\E=3$ if there exists some $\alpha,\beta\in\Z$ such that one of the following holds:
	\begin{itemize}
		\item $A=\alpha^6$ and $B=2^4\beta^6$;
		\item $A=-3^3\alpha^6$ and $B=2^4\beta^6$;
		\item $A=-3^3\alpha^6$ and $B=-3^32^4\beta^6$;
		\item $A=\alpha^6$ and $B=-2^43^3\beta^6$;
		\item or one of the four previous cases, with roles of $A$ and $B$ interchanged.
	\end{itemize}
	Otherwise, $\E$ has rank $r_\E\leq2$.
\end{theorem}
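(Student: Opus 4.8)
The plan is to deduce this theorem from Theorem~\ref{thmdescription} and the rank decision algorithm of Figure~\ref{fig:Diagram_simp}. The algorithm shows that $r_\E = 3$ forces, in sequence: $[\Q(\sqrt A,\zeta_3):\Q]\neq 4$, $[\Q(\sqrt B,\zeta_3):\Q]\neq 4$, $4AB$ a cube, and ($A$ a cube or $B$ a cube). The first step is to unwind the field-degree conditions: $[\Q(\sqrt A,\zeta_3):\Q]\neq 4$ means $\Q(\sqrt A)\subseteq\Q(\zeta_3)=\Q(\sqrt{-3})$, i.e. $\sqrt A\in\{$rational, rational multiple of $\sqrt{-3}\}$, which says exactly that $A$ is a square or $-3$ times a square; likewise for $B$. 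So the three hypotheses "$A$ square or $-3\cdot$square'', "$B$ square or $-3\cdot$square'', "$4AB$ a cube'' are precisely the non-red-box conditions up to the last branching, and the claim to prove is that, \emph{together with} "$A$ or $B$ a cube'', they are equivalent to membership in the displayed list of eight parametrized families.

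The combinatorial core is then an elementary number-theoretic case analysis. Write $A=\varepsilon_A\,a$ and $B=\varepsilon_B\,b$ where $a,b$ are (positive) squares or $-3$ times squares, i.e. $A\in\{\square,\,-3\square\}$ and similarly $B$; here I use $\square$ to denote a perfect square. By symmetry assume $A$ is a cube (the $B$-cube case is the "roles interchanged'' part). An integer that is simultaneously a square and a cube is a sixth power, and an integer that is simultaneously $-3$ times a square and a cube must have the form $-3^3\alpha^6$ (write $A=c^3=-3d^2$; comparing the exponent of $3$ and of every other prime forces $c=-3\gamma^2$, whence $A=-27\gamma^6$). So $A\in\{\alpha^6,\,-3^3\alpha^6\}$. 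Now impose $4AB$ a cube: since $4=2^2$ and $A$ is already a cube, $4AB$ a cube $\iff$ $4B$ a cube $\iff$ $2^2B$ a cube. Combining "$2^2B$ a cube'' with "$B$ a square or $-3\cdot$square'': if $B=\square$, then $2^2B$ a cube and $B$ a square force $B=2^4\beta^6$ (the $2$-adic valuation must be $\equiv 0\pmod 6$ after multiplying by $2^2$, so $v_2(B)\equiv 4$, and every other prime has even valuation divisible by $3$, hence by $6$); if $B=-3\square$, the same argument on every prime other than $2,3$ gives sixth powers, $v_2$ forces the $2^4$, and $v_3(B)$ odd with $2^2B$ a cube forces $v_3(B)=3$, i.e. $B=-2^4 3^3\beta^6$. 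Matching the four resulting $(A,B)$ pairs against the list shows they are exactly the four displayed families; the remaining four come from swapping $A\leftrightarrow B$.

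The main obstacle — really the only place needing care — is checking that this list is \emph{exhaustive and that the algorithm indeed outputs $r=3$ on precisely these inputs}, i.e. that no family is lost in the passage between "$A$ or $B$ is a cube'' (a disjunction) and the explicit parametrizations, and that the "Otherwise'' clause is genuinely complementary. This amounts to verifying that the four cube-type conditions of the algorithm's bottom branch, intersected with the two square-type conditions higher up, partition into exactly the eight families, with every other $(A,B)$ failing at least one test and hence landing in an $r\leq 2$ box. This is a finite verification but must be done symmetrically and with attention to the overlap cases (e.g. $A$ and $B$ both cubes), which the list handles automatically since, say, $A=\alpha^6$, $B=2^4\beta^6$ already has $B$ a cube as well. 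One should also note the consistency check that in each listed family $4AB$ is visibly a cube (e.g. $4\cdot\alpha^6\cdot 2^4\beta^6 = 2^6(\alpha^2\beta^2)^3$ and $4\cdot(-3^3\alpha^6)\cdot(-3^3 2^4\beta^6)=2^6 3^6(\alpha^2\beta^2)^3$), confirming no family violates an earlier branch of the tree.
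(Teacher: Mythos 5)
Your proposal is correct and follows essentially the same route as the paper: the paper derives this statement directly from the rank decision algorithm (the conditions that $A$ and $B$ are each a square or $-3$ times a square, that $4AB$ is a cube, and that $A$ or $B$ is a cube), and your valuation-by-valuation translation of those conditions into the eight explicit families, together with the converse check, is exactly the intended combinatorial argument (mirroring the proof of Corollary~\ref{corollaryconstantrootnumber}).
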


\begin{remark}
	Theorem \ref{thmdescription} implies on an elliptic surface with generic rank $0$ that any minimal model of $\E$ is a del Pezzo surface of degree 1. This is due to the fact that Galois invariant part of the Picard group of $\E$ has rank $2$ (spanned by the image of the general fibre and the image of the zero section). The surface obtained by contraction of the zero section, has the Picard group over $\mathbb{Q}$ of rank $1$ and is minimal. This theory can be found in \cite{Manin}.\end{remark}

\begin{remark}
	In our main theorem, we choose $G(t)=C(AG_1(t)^2+BG_2(t)^2)$ with $G_1(t)=t^3$ and $G_2(t)=1$. An important remark to make is that our result will still hold for any $G_1(t)=L_1(t)^3$, $G_2(t)=L_2(t)^3$ for $L_1,L_2\in\Z[t]$ non proportional linear polynomials. The basis will be given by similar points, with a change of variable, cf. Corollary \ref{cor:lin_subs}. 
	However, given general polynomials $G_1(t),G_2(t)\in\Z[t]$, the situation differs. The Galois action on the Mordell-Weil lattice is maximal for a generic choice of $G_1$ and $G_2$. Points of the N\'{e}ron-Tate height $2$ form a big orbit of size $240$ in that case. We recall in Section \ref{sec:preliminaries_on_MW_groups} how to compute these heights \`{a} la Shioda.
	For particular choices of $G_1$ and $G_2$, it is possible to split this orbit further, for instance into three Galois orbits of size respectively $6$, $72$ and $162$ for most values of $A,B$. 
	These orbits can be decomposed even further by choosing appropriate values of $A$ and $B$ until finding a rational section. \end{remark}

A fractional linear change of coordinates $\phi:t\mapsto (at+b)/(ct+d)$ for $ad-bc\neq 0$ on the base $\mathbb{P}^{1}$ produces from the elliptic surface $\pi:\mathcal{E}_{G}\rightarrow\mathbb{P}^{1}$ a new elliptic surface $\pi':\mathcal{E}\rightarrow\mathbb{P}^{1}$ where $\pi'=\phi\circ \pi$. The map $\phi$ is an automorphism of $\mathbb{P}^{1}$ which induces the automorphism of elliptic surface $\mathcal{E}_{G}$ and $\mathcal{E}$. The effect of this change of coordinates on the generic fibre is visible by replacing the polynomial $G(t)$ with the polynomial $H(t)=(ct+d)^6 G\left(\frac{(at+b)}{(ct+d)}\right)$. Both elliptic surfaces have the same arithmetic properties which is visible in the following corollary.
\begin{corollary}\label{cor:lin_subs}
	Let $a,b,c,d\in\mathbb{Z}$ such that $ad-bc\neq 0$ and let $A,B\in\mathbb{Z}\setminus\{0\}$.
	Define $\E$ to be the elliptic surface given by the equation
	$$E:y^2=x^3+A(at+b)^6+B(ct+d)^6.$$
	The rank of the generic fibre $E$ over $\overline{\Q}(t)$ is 8 and the rank of the group $E(\Q(t))$ is at most equal to 3. The algorithm described in section \ref{sec:Decision_diagram} gives the rank of the group $E(\Q(t))$.
\end{corollary}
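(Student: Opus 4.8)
The plan is to deduce Corollary~\ref{cor:lin_subs} directly from Theorem~\ref{thmdescription} by the change of coordinate on the base $\mathbb{P}^1_t$ already announced in the preceding remark. The essential point is that the fractional linear substitution $u=\frac{at+b}{ct+d}$ is invertible precisely because $ad-bc\neq 0$, so that $\mathbb{Q}(u)=\mathbb{Q}(t)$, and under it the two binary sextics $(at+b)^6$ and $(ct+d)^6$ become proportional, up to a common factor in $\mathbb{Q}(t)^{\times}$, to $u^6$ and $1$.

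Concretely, I would first solve $u=\frac{at+b}{ct+d}$ for $t$ and substitute back, obtaining $at+b=\frac{(ad-bc)u}{a-cu}$ and $ct+d=\frac{ad-bc}{a-cu}$ (an overall sign is immaterial since all exponents are even), hence
\[
A(at+b)^6+B(ct+d)^6=\frac{(ad-bc)^6}{(a-cu)^6}\bigl(Au^6+B\bigr).
\]
Setting $\lambda=\frac{ad-bc}{a-cu}\in\mathbb{Q}(t)^{\times}$ and performing the admissible change of variables $x=\lambda^2X$, $y=\lambda^3Y$ in the defining equation of $E$, then dividing through by $\lambda^6$, transforms $E$ into $Y^2=X^3+Au^6+B$, that is, into the generic fibre $E_{A,B}$ of Theorem~\ref{thmdescription} for the \emph{same} pair $(A,B)$. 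Because $\lambda$ lies in $\mathbb{Q}(t)^{\times}$, this is an isomorphism of elliptic curves defined over $\mathbb{Q}(t)$ (and a fortiori over $\overline{\mathbb{Q}}(t)$); no quadratic or sextic twist obstruction appears.

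From this isomorphism the corollary is immediate. The induced group isomorphisms $E(\mathbb{Q}(t))\xrightarrow{\ \sim\ }E_{A,B}(\mathbb{Q}(t))$ and $E(\overline{\mathbb{Q}}(t))\xrightarrow{\ \sim\ }E_{A,B}(\overline{\mathbb{Q}}(t))$ transport the rank statements of Theorem~\ref{thmdescription}: the geometric Mordell--Weil rank is $8$ and $\rank E(\mathbb{Q}(t))\le 3$, with the trichotomy governed by the arithmetic of $A$ and $B$. Moreover the algorithm of Section~\ref{sec:Decision_diagram}, run on input $(A,B)$, returns the value of the generic rank together with explicit generators of $E_{A,B}(\mathbb{Q}(t))$; pulling these back along $X=\lambda^{-2}x$, $Y=\lambda^{-3}y$ and resubstituting $u=\frac{at+b}{ct+d}$ produces generators of $E(\mathbb{Q}(t))$, so the algorithm applies verbatim.

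I do not expect any real obstacle: the argument is a formal consequence of a function-field isomorphism. The only care needed is bookkeeping — verifying that the substitution and its inverse have coefficients in $\mathbb{Q}$, which is exactly where $ad-bc\neq 0$ is used; noting the displayed identity still holds when $a=0$ or $c=0$ (with $a-cu$ reading $-cu$ or $a$ respectively), so that a genuinely degenerate $(at+b)^6$ or $(ct+d)^6$ causes no trouble; and remarking that ``effective and efficient'' is inherited since it is literally the same algorithm on the unchanged coefficients, followed by one explicit rational coordinate change.
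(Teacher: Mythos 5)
Your proof is correct and is exactly the argument the paper intends (the paper offers no separate proof, only the remark that a change of variable reduces to the case $G_1=t^3$, $G_2=1$): the Möbius substitution $u=\frac{at+b}{ct+d}$, valid since $ad-bc\neq 0$, together with the rescaling $x=\lambda^2X$, $y=\lambda^3Y$ with $\lambda=\frac{ad-bc}{a-cu}$, gives a $\mathbb{Q}(t)$-isomorphism with $E_{A,B}$, from which all claims transfer. Your computation of $at+b$ and $ct+d$ in terms of $u$, the handling of the $a=0$ or $c=0$ cases, and the observation that no twist obstruction arises are all accurate.
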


\subsubsection*{Motivation and method}
The motivation for Theorem \ref{thmdescription} is Corollary \ref{corollaryconstantrootnumber} that lists the elliptic surfaces $\E_{3a^2,b^2}$ ($a,b\in\mathbb{Z}\setminus\{0\}$) and their generic rank. 
As explained in Section \ref{section:previousapproaches}, on those rational elliptic surfaces the Zariski density of the rational points (and $\Q$-unirationality) is uncertain: there exists no geometric proof, and the study of the variation of the root number on the fibres is indecisive: it always takes the value $+1$, so according to the parity conjecture (weak BSD) the rank is even - but possibly zero on all the fibres.

Our method uses the correspondence between the 240 lines of a del Pezzo surface of degree 1 and the sections of minimal canonical height ($=2$) on the corresponding elliptic surface over $\overline{\mathbb{Q}}$. After finding a basis of the Mordell-Weil lattice over $\overline{\Q}$ we study the Galois action on those lines and the submodules their orbits generate in order to determine a basis (and so the rank) of the Mordell-Weil lattice over the rationals.

\subsubsection*{Contents of the paper}
First, in Section 2, we discuss some previous approaches towards proving the density of points on del Pezzo surfaces. Next, we study when the root number of the family $\mathcal{E}_{G}$ is positive. 
In those cases we can apply the algorithm from further sections to find instances when the Mordell-Weil rank over $\mathbb{Q}(t)$ is positive, hence obtaining a density of rational points on the surface $\mathcal{E}_{G}$. 
We discuss in Section \ref{sec:new_examples} for which polynomials $G$ we obtain a new density result.

In Section \ref{sec:structure_of_the_orbits} we compute the structure of the group $\Lambda=E_{G}(\overline{\mathbb{Q}}(t))$ for $G=At^6+B$, $A,B\in\mathbb{Q}^{\times}$.
We obtain it by studying the explicit set of points in $\Lambda$ which have height $2$. From the setup, there are $240$ such points and under the natural action of the absolute Galois group $\Gal(\overline{\mathbb{Q}}/\mathbb{Q})$ they decompose into at least $8$ orbits which we explicitly classify.

In Section~\ref{sec:proof_main_theorem},  based on the results of the previous section, we determine the structure of $\Lambda$ as the natural Galois module with respect to the group $\Gal(\overline{\mathbb{Q}}/\mathbb{Q})$. Since the group $\Lambda$ has no non-zero torsion elements it forms with a height pairing a positive definite lattice. We find a sublattice $\Lambda'$ of index $81$ in $\Lambda$ which allows us to compute the subgroup $\Lambda_{\mathbb{Q}}$ of points defined over $\mathbb{Q}(t)$ for any choice of $A,B\in\mathbb{Z}\setminus\{0\}$. 
The rank of $\Lambda_{\mathbb{Q}}$ varies between $0$ and $3$ and for each choice of $A$ and $B$ we provide a complete answer, packaged in a decision diagram in Section \ref{sec:Decision_diagram}. 
Next, we compute in Section \ref{sec:rational_basis} the minimal height generators for the group $\Lambda_{\mathbb{Q}}$. 

Finally, in Section \ref{sec:density_gen_rank_0} we briefly discuss what are the possible extensions of our work to the cases when the rank of $\Lambda_{\mathbb{Q}}$ is $0$.

In each section of the paper we verify certain statements with MAGMA computer algebra system. The source code of our programs is available online on the website of one of the authors, \cite{Magma_code}
\section{Previous and new approaches to density of points}\label{section:previousapproaches}

\subsection{Geometric methods}

Let $\E$ be a rational elliptic surface defined over $\Q$.
A famous theorem of Iskovskikh \cite{Isk} then says that $\E$ has a minimal model over $\Q$, denoted $X$, that is either a conic bundle of degree $\geq1$, or a Del Pezzo surface.

We say that a surface $S$ is $k$-unirational if there is a dominant rational map $\mathbb{P}^2\dashrightarrow S$. Be aware that for $k$ an algebraically closed field, being $k$-unirational is equivalent to being rational. However, it is not clear that it is the case for other fields. Moreover, $k$-unirationality is \textit{a priori} a stronger property, implying the density of the rational points.

\begin{theorem}[\cite{KollMell, Manin, STVA}]
	Let $\E$ be a rational elliptic surface over a field $k$ whose generic rank is non-zero, then the surface is $k$-unirational.
\end{theorem}

\begin{proof}
	Koll\'ar and Mella \cite{KollMell} proves $k$-unirationality when $X$ is a conic bundle\footnote{In this case, the surface is $\Q$-unirational, i.e. it is dominated by the projective plane $\P^2\dashrightarrow X$} when $\rm{char} k\not=2$ ,
	by Segre and Manin \cite{Manin} when $X$ is a del Pezzo surface of degree $d\geq3$ with at least one rational point, by Salgado, Testa and V\'arilly-Alvarado \cite{STVA}, based on a work of Manin \cite[Thm 29.4]{Manin}, when $X$ is a del Pezzo surface of degree 2 provided that it contains a rational point that neither lies on four exceptional curves nor the ramification quartic curve. Thus in that case, $k$-unirationality holds on both surfaces. 
	
	In the case of a del Pezzo surface of degree 2 that is obtained by contracting an exceptional curve of a del Pezzo surface of degree 1 to a point $p$, \cite[Corollary 14]{STVA} guaranties that $p$ is rational and does not lie in an exceptional curve or the ramification curve. 
\end{proof}

\begin{corollary}
	For the surfaces in our paper, this means Theorem \ref{thmdescription} proves $\Q$-unirationality on $\E_{A,B}$ provided that the rank is $\geq1$.
\end{corollary}

If the minimal model $X$ is a del Pezzo surface of degree $1$, the surface $X$ has automatically a rational point: the base point of the anticanonical system. However, the results concerning density of rational points are still partial.

In papers of Ulas \cite{Ulas,Ulas2} and Jabara \cite{Jabara}, the density of rational points on certain families of isotrivial rational elliptic surfaces with $j$-invariant $0$ and $1728$ is proved by constructing a multisection with infinitely many rational points on those families. 
An article of Salgado and van Luijk \cite{SVL} improves this construction, and proves the Zariski density of the set of rational points of a del Pezzo surface of degree 1 satisfying certain conditions. 
However, those conditions were hard to check and moreover, the multisection they constructed sometimes failed to have infinitely many rational points - and so it did not prove the Zariski density.

Recently, Bulthuis and van Luijk \cite{BulthuisVanLuijk} proved that given a point on a del Pezzo surface of degree 1 which is of finite order in the fibre, there exists an pencil of elliptic curves through this point: each of the elliptic curves of the fibration is a multisection of the surface. 
In order to prove the density of points one has to prove that one of these elliptic curves has positive Mordell-Weil rank over $\mathbb{Q}$. However, this last step is not always an easy task. 

In an article by Winter and the first author \cite{DW}, one proves the density of the rational points on elliptic surfaces of the form (\ref{eq:our_surfaces}) via the construction of an ''elliptic multisection'' passing through a non-torsion point, cf. Section \ref{sec:density_gen_rank_0}.

\subsection{Root number method}

To prove that the set of rational points $\E(\Q)$ of an elliptic surface $\E$  is Zariski dense, it suffices to show that for infinitely many $t\in\P^1(\Q)$ the fibre $\E_t$ is an elliptic curve with positive Mordell-Weil rank, cf. \cite[Lem. 7.4]{SVL}.
For this reason, it is useful to study the \emph{root number}, denoted by $W(E)\in\{\pm1\}$. 
The root number $W(E)$ is conjecturally equal to the parity $(-1)^{rk(E)}$ of the Mordell-Weil rank\footnote{The parity conjecture $W(E)=(-1)^{rkE(\Q)}$ is equivalent to the congruence modulo $2$ of the algebraic rank with the analytic rank (the order of annulation of the $L$-function at $s=1$), thus it is a weakening of the Birch and Swinnerton-Dyer conjecture which predicts the equality of the two ranks, cf. \cite{DokchitserParity}.}.
Under this Parity Conjecture, it is thus sufficient to find infinitely many fibres with negative root number in order to prove the Zariski density.

Building on ideas of \cite{Manduchi}, \cite{Helfgott} and \cite{VA}, the first author \cite{Desjardins1} confirms (conditionally\footnote{This result depends on the parity conjecture and two more analytic number theory conjectures}) that non-isotrivial elliptic surfaces have a dense set of rational point through the study of the variation of the root number. 
For isotrivial elliptic surfaces, it can happen that a family has a constant root number, but \cite{Desjardins2} proves (unconditionally) that they have a dense set of rational points given that the surface has $j$-invariant $j\not=0$. If $W(E_t)=+1$ for all $t\in\P^1(\Q)$ and that the $j$-invariant is $0$, then it is not yet known whether the rational points are Zariski dense. 

A rational elliptic surface with $j=0$ has a Weierstrass equation $y^2=x^3+G(t)$ for some $G\in\mathbb{Q}[t]$ with $\deg G\leq6$; the difficult case correspond to a Del Pezzo surface of degree 1, i.e. when $\deg G = 5$ or $6$. For these V\'arilly-Alvarado \cite[Theorem 2.1]{VA} proved that the root number varies, provided that $G$ has a irreducible factor $G_i$ such that $\sqrt{-3}\not\in\mathbb{Q}[t]/(G_i)$. It is therefore natural to look at polynomials not satisfying this condition. 

The following proposition holds true: clearly, many of the elliptic surfaces covered in Theorem \ref{thmdescription} fall in this pattern. 
\begin{proposition}\label{prop:poly_embed}
	A square free polynomial $G(t)=c\prod_i{f_i}$, $f_i\in\mathbb{Z}[t]$ irreducible, satisfies the condition $$\mu_3\subset\Q[t]/f_i(t)\quad \text{for all }i,$$ (where $\mu_3$ is the group of third roots of unity) if and only if there exist non-zero polynomials $G_1,G_2\in\Z[t]$ and a constant $C\in\Z$ such that:
	$$G(t)=C(3G_1(t)^2+G_2(t)^2).$$
\end{proposition}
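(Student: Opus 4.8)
The plan is to characterize, for each irreducible factor $f_i$, when $\mu_3 \subset \Q[t]/f_i(t)$ in terms of a quadratic form representation, and then assemble the local information at each factor into a global representation $G = C(3G_1^2 + G_2^2)$. The key observation is that for a number field $K = \Q[t]/f_i(t)$, the condition $\mu_3 \subset K$ is equivalent to $-3$ being a square in $K$, since $\mu_3 = \{1, \zeta_3, \zeta_3^2\}$ and $\Q(\zeta_3) = \Q(\sqrt{-3})$. So the hypothesis is: $-3$ is a square modulo $f_i(t)$ for every $i$.

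First I would prove the ``if'' direction, which is the easy one. If $G(t) = C(3G_1(t)^2 + G_2(t)^2)$, then modulo any irreducible factor $f_i$ of $G$ we have $3G_1^2 + G_2^2 \equiv 0$. Since $G$ is squarefree, $f_i$ divides $G$ exactly once, and I would argue that $f_i \nmid G_1$ and $f_i \nmid G_2$ (if $f_i$ divided one of them it would divide the other, hence $f_i^2 \mid G$, contradiction — modulo the case $C \equiv 0$, which is excluded since we may assume $\gcd$ considerations make $C$ a nonzero constant whose prime factors can be absorbed; one should be slightly careful here and perhaps note that $f_i$ has positive degree so $f_i \nmid C$). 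Then in $K = \Q[t]/f_i(t)$, the image $\bar G_2$ is a unit and $(\bar G_1/\bar G_2)^2 = -1/3$, so $-3 = (3\bar G_1/\bar G_2)^{-2}\cdot(-9)$... more cleanly, $-3 = (3\bar G_1 \bar G_2^{-1})^2 \cdot (\text{something})$; the cleanest statement is that $-1/3$, hence $-3$, is a square in $K$, so $\mu_3 \subset K$.

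For the ``only if'' direction, suppose $-3$ is a square in each $K_i = \Q[t]/f_i(t)$. For each $i$ pick $h_i \in \Q[t]$ with $h_i^2 \equiv -3 \pmod{f_i}$. By the Chinese Remainder Theorem over $\Q[t]$ (the $f_i$ are distinct irreducibles since $G$ is squarefree), I would find a single polynomial $h \in \Q[t]$ with $h \equiv h_i \pmod{f_i}$ for all $i$, so that $h^2 + 3 \equiv 0 \pmod{f_i}$ for every $i$, hence $\prod_i f_i \mid h^2 + 3$ in $\Q[t]$, i.e. $F := \prod_i f_i$ divides $h^2 + 3$. Write $h^2 + 3 = F \cdot q(t)$ for some $q \in \Q[t]$. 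Now the goal is to turn this into the shape $3G_1^2 + G_2^2$ up to a constant. The identity I want is a norm/Brahmagupta–type composition: $(3a^2 + b^2)(3c^2 + d^2) = 3(ad+bc)^2 + (bd - 3ac)^2$, reflecting that $3x^2 + y^2$ is the norm form of $\Z[\sqrt{-3}]$. Since $h^2 + 3 = 3\cdot 1^2 + h^2$ is already of the form $3G_1^2 + G_2^2$ with $(G_1, G_2) = (1, h)$, I would then need to divide by $q(t)$ and clear denominators. The delicate point is that $q(t)$ itself need not be of the form $3(\cdot)^2 + (\cdot)^2$; one has to use that $F$ squarefree forces control on $q$, or argue by induction on $\deg G$ peeling off one irreducible factor at a time: show each individual $f_i$ is (a rational constant times) a value of the form $3G_1^2+G_2^2$ evaluated appropriately, then multiply using the composition identity.

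The main obstacle I anticipate is exactly this passage from ``$F \mid h^2+3$'' to an honest representation $G = C(3G_1^2+G_2^2)$ with $G_1, G_2 \in \Z[t]$ and $C \in \Z$: managing the denominators and the constant $C$, and handling $f_i$ of odd versus even degree (a degree-one factor $t - \alpha$ with $-3$ a square in $\Q$ means $\alpha$ is arbitrary but $-3 \in (\Q^\times)^2$ is false, so in fact $\mu_3 \subset \Q[t]/f_i$ can only happen for $\deg f_i \geq 2$ unless $f_i$ is linear and... no — $\Q$ does not contain $\mu_3$, so all $f_i$ must have even degree, which is itself a useful structural constraint worth recording). I would handle this by working prime-by-prime and factor-by-factor: reduce to the case $G = f$ irreducible with $\mu_3 \subset \Q[t]/f$, show directly that $f$ divides some $3u(t)^2 + v(t)^2$ with $u,v$ coprime, and then invoke that $3u^2 + v^2$ and $f$ have the same squarefree part together with a degree/leading-coefficient count to pin down the constant $C$, before reassembling the general squarefree $G$ via the composition law. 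Throughout, the ring-theoretic engine is that $3x^2+y^2$ is multiplicative as the norm form of $\Q(\sqrt{-3})$, and $\mu_3 \subset \Q[t]/f_i$ is precisely the condition that $f_i$ splits (or rather, that $\Q(\sqrt{-3}) \hookrightarrow \Q[t]/f_i$), which is what makes the local representations exist.
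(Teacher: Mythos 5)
Your ``if'' direction is fine and coincides with the paper's argument. The gap is in the ``only if'' direction, at exactly the step you flag as the ``main obstacle'' and then do not actually resolve. From $F\mid h^2+3$ (via CRT) the cofactor $q$ has degree up to $\deg F-2$ and is completely uncontrolled, and your fallback --- reduce to a single irreducible $f$, find coprime $u,v$ with $f\mid 3u^2+v^2$, and ``invoke that $3u^2+v^2$ and $f$ have the same squarefree part together with a degree/leading-coefficient count'' --- is not a valid step: divisibility with $u,v$ coprime does not force proportionality or even equality of squarefree parts. Concretely, take $f=t^2+3$, $u=1$, $v=t^3+4t$; then $3u^2+v^2=(t^2+3)(t^4+5t^2+1)$, and the cofactor $t^4+5t^2+1$ is a new squarefree factor having nothing to do with $f$. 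So as written, neither the global CRT route nor the factor-by-factor route gets you from ``$f$ divides some value of the form'' to ``$f$ \emph{is} a rational constant times a value of the form,'' which is the whole content of the direction.

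The missing idea is a \emph{degree-bounded} representation of $\sqrt{-3}$ in $K=\Q[t]/f$, which is how the paper closes this gap. Writing $\deg f=2d$ (your parity observation is correct and is used here), one shows by linear algebra that there exist $P,Q\in\Z[t]$, not both zero, with $\deg P\le d$, $\deg Q\le d-1$ and $\sqrt{-3}=P(\alpha)/Q(\alpha)$: multiplying a general $\beta=\sum_{k=0}^{d-1}b_k\alpha^k$ by $\sqrt{-3}$ and forcing the coefficients of $\alpha^{d+1},\dots,\alpha^{2d-1}$ to vanish gives $d-1$ homogeneous linear equations in the $d$ unknowns $b_k$, hence a nontrivial solution. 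Then $P^2+3Q^2$ is a nonzero polynomial of degree at most $2d=\deg f$ divisible by $f$, so it is a rational multiple of $f$; this is the proportionality your sketch needs and cannot get from coprimality alone. Once each $f_i=c_i(P_i^2+3Q_i^2)$ with $c_i\in\Q$, your multiplicativity of the norm form of $\Q(\sqrt{-3})$ (the Brahmagupta-type identity) and clearing of denominators finish the argument exactly as you intend; that part of your plan agrees with the paper.
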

\begin{proof}
	Suppose there exist two polynomials $G_1,G_2\in\Z[t]$ and a constant $C\in\Z$  such that $G(t)=C(3G_1(t)^2+G_2(t)^2)$. Let $\alpha\in\overline{\mathbb{Q}}$ denote a root of a factor $f=f_i$. Then $G_1(\alpha)\neq 0$, otherwise it would follow that $f|G_2$ and $f^2|G$, contradicting the assumption on $G$. 
	Hence $-3=\left(\frac{G_2(\alpha)}{G_1(\alpha)}\right)^2$ and $\mu_3\subset \mathbb{Q}(\alpha)$.
	
	Let $G(t)=c\prod_{i}f_i$, where $f_i$ are irreducible polynomials such that the fields $K_i=\mathbb{Q}[t]/f_i$ contain $\mu_3$. In particular, fields $K_i$ have even degree $2d$ over $\mathbb{Q}$ and $\sqrt{-3}\in K_i$. We fix one index $i$ for now. Let $f=f_i$ and $K=K_i$. We denote by $\alpha$ a certain root $\alpha_i$ of $f_i$. 
	
	\begin{lemma}\label{lem:PQ_lemma}
		There exist two polynomials $P$ and $Q$ with integer coefficients such that $\deg P\leq d$ and $\deg Q\leq d-1$ and  
		\begin{equation}\label{eq:sq_m3}
			\sqrt{-3}=\frac{P(\alpha)}{Q(\alpha)}.
		\end{equation}
	\end{lemma}
	
	\begin{proof}[Proof of Lemma \ref{lem:PQ_lemma}]
		Suppose we have an element $\delta=\sum_{k=0}^{2d-1}c_k \alpha^k$ in a number field $K=\mathbb{Q}(\alpha)$ of even degree $2d$. Let $\beta=\sum_{k=0}^{d-1} b_k \alpha^k$. Then $$\delta\beta = \sum_{k=0}^{2d-1}\ell_{k}(b_0,\ldots,b_{d-1})\alpha^k$$ where the expressions $\ell_{k}$ are linear forms in $b_0,\ldots, b_{d-1}$ with coefficients in $\mathbb{Z}$ which depend on $\{c_k\}$.
		The linear system $$\ell_{k}(b_0,\ldots,b_{d-1})=0,\quad d+1\leq k\leq 2d-1$$
		has $d$ variables and $d-1$ equations, so has a nontrivial solution, which provides the coefficients $a_k=\ell_{k}(b_0,\ldots, b_{d-1})$. 
	\end{proof}
	It follows from Lemma \ref{lem:PQ_lemma} that $P(\alpha)^2+3Q(\alpha)^2=0$ and thus $f$ divides the polynomial $P^2+3Q^2$, so in fact they are proportional due to degree conditions.
	Hence, for each $i$ we have $f_i=c_i (P_i^2+3Q_i^2)$ and that concludes the theorem.
	
\end{proof}

\begin{remark}
	The polynomials such that $\delta=P(\alpha)/Q(\alpha)$ with $\deg P\leq d$, $\deg Q\leq d-1$ are not unique but the fraction $P/Q$ is. Indeed, if $P(\alpha)/Q(\alpha)=\delta=R(\alpha)/S(\alpha)$, then $(PS-RQ)(\alpha)=0$ and since the degree of $PS-RQ$ is smaller then $\deg f$, it follows that $PS-RQ=0$, hence $P/Q=R/S\in\mathbb{Q}(t)$.
\end{remark}

Family $\E_{A,B}$ coincides with $\E_{G}$ up to linear change of variables if and only if $G_1$ and $G_2$ are coprime polynomials with $\max(\deg(G_1),\deg(G_1))=3$ and $G_1,G_2$ are constant times a cube and $A=3c\cdot a^2$ and $B=c\cdot b^2$ ($a,b,c\in\Z$ constants such that $a$ and $b$ are coprime). 
In that case the family forms a set of sextic twists for which it is not guaranteed that the root number of a fibre $\E_t$ varies when $t$ varies through $t\in\P(\Q)$. In a previous paper \cite[Theorem 6.1]{Desjardins2}, the first author gives the precise conditions on $a$, $b$ and $c$ for which the root number takes the same values on every fibre.

We apply the algorithm from Theorem \ref{thmdescription} to the elliptic surface with equation $\E:y^2=x^3+3ca^2t^6+cb^2$, where $a,b,c\in\Z\setminus\{0\}$ and $gcd(a,b)=1$. According to our decision algorithm: 
\begin{itemize}
	\item the generic rank $r_{\E}$ is $2$ if $c$ or $-3c$ (resp. $3c$ or $-c$) is a square and $4AB$ and $A$ (resp. $4AB$ and $B$) are cubes;
	\item the rank $r_{\E}$ is $1$ if $c$ or $-3c$ (resp. $3c$ or $-c$) is a square and either $4AB$ or $A$ (resp. $4AB$ or $B$) are cubes;
	\item the rank $r_{\E}$ is $0$ otherwise.
\end{itemize}

This leads to the following result:

\begin{corollary}\label{corollaryconstantrootnumber}
	Let $\E$ be the elliptic surface given by the equation $y^2=x^3+At^6+B$ where $A=3ca^2$ and $B=cb^2$ for some $a,b,c\in\Z\setminus\{0\}$ with $gcd(a,b)=1$. Then $\E$ has generic rank $r_\E=2$ if there exist some $\alpha,\beta\in\Z$ satisfying one of the following:
	\begin{itemize}
		\item $A=3^3\alpha^6$ and $B=2^4\beta^6$ (or $A$ and $B$ switched);
		\item $A=-\alpha^6$ and $B=-2^43^3\beta^6$ (or $A$ and $B$ switched);
	\end{itemize}
	Suppose that it is not the case. Then $\E$ has generic rank $r_\E=1$ if there exist some non-zero $p,q\in\Z$ coprime and not divisible by 2 and 3, such that $A$ and $B$, written up to sixth power representative and possibly switched in the equation \footnote{Switched means that we look at $y^2=x^3+Bt^6+A$.}, are among
	\begin{itemize}
		\item $A=3^3$, $B=p^2$;
		\item $A=-1$, $B= -3p^2$
		\item $A$ and $B$ have the same sign and appear in Table \ref{listrank1surfaces}
	\end{itemize}
	
	\begin{table}
		\begin{center}
			\begin{tabular}{c|c|c}
				$\vert A\vert$ & $\vert B\vert$ \\
				\hline
				$3p^2q^4$ &  $2^43^2p^4q^2$ \\
				$3^3p^2q^4$ &  $2^4p^4q^2$ \\
				$3^5p^2q^4$ &  $2^43^4p^4q^2$  \\
				$2^23p^2q^4$ &  $2^23^2p^4q^2$ \\
				$2^23^3p^2q^4$ &  $2^2p^4q^2$ \\
				$2^23^5p^2q^4$ &  $2^23^4p^4q^2$ \\
				$2^43p^2q^4$ &  $3^2p^4q^2$ \\			
				$2^43^3p^2q^4$ &  $p^4q^2$ \\
				$2^43^5p^2q^4$ &  $3^4p^4q^2$ \\
			\end{tabular}
		\end{center}
		\caption{Possibilities for $\vert A\vert$ and $\vert B\vert$ written up to sixth power representatives. We assume $p,q$ are cube-free, coprime and coprime to $6$.}\label{listrank1surfaces}
	\end{table}
	
	In all the other cases of $A$ and $B$, the generic rank is $r_\E=0$.
\end{corollary}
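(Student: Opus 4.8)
The plan is to obtain the statement by specialising the decision procedure of Theorem~\ref{thmdescription} to the one-parameter family $A=3ca^2$, $B=cb^2$ with $\gcd(a,b)=1$, and then rewriting the output in terms of sixth-power-free representatives of $A$ and $B$. Recall that the refined diagram of Section~\ref{sec:Decision_diagram} returns the generic rank $r_\E\in\{0,1,2,3\}$ as an explicit Boolean function of the predicates ``$A$ (resp.\ $B$) is a square'', ``$A$ (resp.\ $B$) is $-3$ times a square'' and ``$A$ (resp.\ $B$, resp.\ $4AB$) is a cube''. Every one of these depends only on the class of its argument in $\Q^\times/(\Q^\times)^6$, hence only on the sixth-power-free parts $A_0$, $B_0$; so it suffices to enumerate the admissible pairs $(A_0,B_0)$.

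First I would translate the predicates under $A=3ca^2$, $B=cb^2$. Because $a^2$, $b^2$ are squares one gets ``$A$ is a square'' $\Leftrightarrow$ ``$3c$ is a square'', ``$-3A$ is a square'' $\Leftrightarrow$ ``$-c$ is a square'', ``$B$ is a square'' $\Leftrightarrow$ ``$c$ is a square'', ``$-3B$ is a square'' $\Leftrightarrow$ ``$-3c$ is a square'' --- exactly the list recorded just above the corollary --- so the four square-type predicates involve $c$ alone. For the cubes one uses $4AB=12\,a^2b^2c^2=3(2abc)^2$: ``$4AB$ is a cube'' becomes ``$v_3(2abc)\equiv 1 \pmod 3$ and the prime-to-$3$ part of $2abc$ is a cube'', while ``$A$ is a cube'' and ``$B$ is a cube'' become explicit congruences on the $v_\ell(a),v_\ell(b),v_\ell(c)$. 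Plugging these into the diagram reproduces the three bullet points stated above the corollary: $r_\E=2$ iff one of the square-type predicates holds together with ``$4AB$ is a cube'' and the corresponding one of ``$A$ is a cube'', ``$B$ is a cube''; $r_\E=1$ iff one of the square-type predicates holds together with ``$4AB$ is a cube'' \emph{or} the corresponding cube predicate, but one is not in the rank-$2$ situation; and $r_\E=0$ otherwise.

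It then remains to convert these conditions into the displayed normal forms. From $AB=3(abc)^2$ one sees that $A_0B_0$ equals $3$ times a square (in particular $A$ and $B$ have the same sign). At any prime $\ell\neq 2,3$, a square-type predicate forces $v_\ell(A_0),v_\ell(B_0)$ to be even, hence in $\{0,2,4\}$, and ``$4AB$ is a cube'' forces $v_\ell(A_0)+v_\ell(B_0)\equiv 0\pmod 3$; with $\gcd(a,b)=1$ this leaves only $(v_\ell(A_0),v_\ell(B_0))\in\{(0,0),(2,4),(4,2)\}$, which is the source of the factors $p^2q^4$ and $p^4q^2$ in Table~\ref{listrank1surfaces}. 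The same two constraints leave $(v_2(A_0),v_2(B_0))\in\{(0,4),(2,2),(4,0)\}$ at $\ell=2$, explaining the $2^2$ and $2^4$ prefixes, and --- taking into account the fixed extra factor $3$ in $A=3ca^2$, which shifts the relevant parity --- $(v_3(A_0),v_3(B_0))\in\{(1,2),(3,0),(5,4)\}$ at $\ell=3$, explaining the powers $3,3^3,3^5$ and $3^0,3^2,3^4$. Running through this finite list of local possibilities and sorting by which of the three bullet points is met yields exactly the two $r_\E=2$ families, the two exceptional $r_\E=1$ families $A\equiv 3^3$, $B\equiv p^2$ and $A\equiv -1$, $B\equiv -3p^2$ (the branch in which $A$ is a cube but $4AB$ is not), Table~\ref{listrank1surfaces} for the remaining $r_\E=1$ cases, and $r_\E=0$ in all the others.

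The only real work is in this last step: one must simultaneously meet a square-type condition (an evenness condition at each prime, refined by a sign and a $2$-adic and a $3$-adic condition) and a cube-type condition (a condition on valuations modulo $3$), under $\gcd(a,b)=1$ and the forced factor $3$ in $A$; the resulting bookkeeping at the primes $2$ and $3$ is intricate but entirely finite. I would then verify that the surviving cases are exactly those tabulated --- with no omissions and no spurious entries --- and that each is compatible with the classification of constant root number in \cite[Theorem~6.1]{Desjardins2}, these being precisely the surfaces for which the root-number method is inconclusive; a short MAGMA enumeration over sixth-power-free $A_0,B_0$ supported on $\{2,3\}$ together with two auxiliary primes provides an independent check.
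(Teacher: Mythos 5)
Your proposal is correct and follows essentially the same route as the paper: the paper's proof is exactly a "purely combinatorial" specialisation of the decision algorithm to $A=3ca^2$, $B=cb^2$, translating the square-type tests into conditions on $c$ and then enumerating the admissible valuations separately at $2$, $3$ and $p\geq 5$, which is what you do via the sixth-power-free parts $A_0,B_0$. The only difference is presentational (you phrase the bookkeeping through local constraints on $v_\ell(A_0),v_\ell(B_0)$ and handle the $c=-3\gamma^2$ and subroutine-2 branches by switching $A$ and $B$, while the paper walks the branches of Figures \ref{fig:Subroutine_1}--\ref{fig:Subroutine_2} directly), so no substantive comparison is needed.
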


\begin{proof}
	The proof is based on the decision algorithm and is purely combinatorial. 
	
	Suppose either that $c$ or $-3c$ is a square, say $c=\gamma^2$. Then $A=3(\gamma a)^2$ and $B=(\gamma b)^2$. If $c=-3\gamma^2$, then $A=-(3\gamma a)^2$ and $B=-3(\gamma b)^2$. In both cases, we take the path $YES$ , $NO$ in Figure \ref{fig:Diagram_0}, the initial point of the procedure. This leads us to continue the decision algorithm on Figure \ref{fig:Subroutine_1}. 
	
	Suppose that $c=\gamma^2$. If $4AB$ is a cube, then $4AB=2^2 3\gamma^4 a^2b^2=\delta^3$. 
	In that case, we take the branch $YES$, and so :
	\begin{itemize}
		\item If $A$ if a cube, we take the branch $YES$ and thus $r_\E=2$.
		Then we obtain that $A=3^3\alpha^6$ and $B=2^4\beta^6$. Those computations are very explicit and we emphasise that we do them separately for valuations at primes $2$, $3$ and $p\geq 5$.
		
		\item If $A$ is not a cube, we take the branch $NO$ and thus $r_\E=1$. 
		Then $A$ can take any value among $2^{2e}3^{2f+1}p^2q^4\alpha^6$ with $e,f\in\{0,1,2\}$ as soon as it is not $e=0$, $f=1$ and $p=q$ - in which case it is a cube. 
		These values are listed in Table \ref{listrank1surfaces}. (We have $B=2^{4-2e}3^{2(1+2f)}p^4 q^2\beta^6$.)
	\end{itemize}
	Now, if $4AB$ is not a cube, we follow the path $NO$ in the first step of Figure \ref{fig:Subroutine_1}.
	\begin{itemize}
		\item If $A$ is a cube, we take the branch $YES$ and thus $r_\E=1$. 
		This happens if $A=3^{3}\alpha^6$.
		The possibilities for $B$ will be among those such that
		$B=2^{2e}3^{2f}p^2\beta^6$ for all choices of $e,f\in\{0,1,2\}$ and $p\in\Z$ except the case $e=0$, $f=1$ and $p=\alpha^3$ - because in that case $4AB$ is a cube and hence we are not anymore on the right branch of the diagram: we rather have $r_\E=2$.
		\item If $A$ is not a cube, we take the branch $NO$ and thus $r_\E=0$. 
	\end{itemize}
	We find the possibility for $A$ and $B$ when $c=-3\gamma^2$ in a very similar way, using the same path in the routine.
	
	The cases for $-c$ square or $3c$ square can be obtained in a similar way - this time we take the path $NO$, $YES$ in Figure \ref{fig:Diagram_0} and we run through the different path of Figure \ref{fig:Subroutine_2}.
\end{proof}

\subsection{New examples proved with the Main theorem}\label{sec:new_examples}

\begin{theorem}\label{thm:new_examples}
	Every elliptic surface given by an equation of the form $\E:y^2=x^3+c(3a^2t^6+b^2)$ with generic rank 2 has constant root number on their fibres: $W(\E_t)=+1$ for all $t\in\Q$.
	
	The complete list of elliptic surfaces with a generic fibre of the form $y^2=x^3+At^6+B$ with constant root number on its fibres and generic rank 1 is given by Table \ref{listeconstantrootnumberrank1}. The table also states when $W(\E_t)=-1$ for all $t\in\Q$ and when $W(\E_t)=+1$ for all $t\in\Q$.
\end{theorem}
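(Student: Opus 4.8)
The plan is to reduce the whole statement to the one-parameter analysis already carried out by the first author in \cite[Theorem 6.1]{Desjardins2} and then to feed in the rank classification of Corollary \ref{corollaryconstantrootnumber}. A smooth fibre $\E_t$, $t=p'/q'\in\Q$ in lowest terms, is an elliptic curve $y^2=x^3+D$ where $D$ is the sixth-power-free part of $Ap'^6+Bq'^6$ (sixth powers do not affect the root number), and its global root number factors as $W(\E_t)=-\prod_\ell W_\ell(\E_t)$, with $W_\ell=1$ at good primes. For the bad primes I would use the explicit local root number formulae for curves of $j$-invariant $0$ (cf. \cite{VA} and the references therein): for $\ell\ge 5$, $W_\ell(\E_t)$ depends only on $v_\ell(D)\bmod 6$ and a Legendre symbol $\left(\tfrac{-3}{\ell}\right)$ or $\left(\tfrac{-1}{\ell}\right)$, whereas for $\ell\in\{2,3\}$, where every fibre has additive reduction, $W_\ell(\E_t)$ is read off the refined tables in terms of $v_\ell(D)$ and congruences of $D$ modulo a small power of $\ell$. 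The first reduction is this: if $W(\E_t)$ is constant, then $W_+(\E)$ and $W_-(\E)$ cannot both be infinite, so by the contrapositive of the theorem of V\'arilly-Alvarado (\cite[Theorem 2.1]{VA}) every irreducible factor of $At^6+B$ contains $\mu_3$; by Proposition \ref{prop:poly_embed} and the discussion following it, this forces $3AB$ to be a perfect square (possibly after interchanging $A$ and $B$), i.e. $\E$ lies in the sextic-twist family $y^2=x^3+c(3a^2t^6+b^2)$ with $a,b,c\in\Z\setminus\{0\}$ and $\gcd(a,b)=1$. Hence it suffices to analyse this family.

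For the rank $2$ statement, the theorem preceding Corollary \ref{corollaryconstantrootnumber} tells us that a surface $y^2=x^3+c(3a^2t^6+b^2)$ has generic rank $2$ exactly for the finitely many shapes of $(|A|,|B|)$, up to sixth powers and interchange, namely $(3^3,2^4)$ and $(1,2^43^3)$, with the sign patterns listed there. For each such shape I would check, using \cite[Theorem 6.1]{Desjardins2} (or directly, treating $\ell=2$, $\ell=3$ and $\ell\ge 5$ separately), that the conditions on $(a,b,c)$ guaranteeing constancy of the root number are met, and that the resulting constant value of $-\prod_\ell W_\ell(\E_t)$ equals $+1$. This is a finite explicit verification which we also perform in MAGMA.

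For the rank $1$ statement, Corollary \ref{corollaryconstantrootnumber} lists the generic-rank-$1$ members of the family: up to sixth powers and interchange they are $A=3^3$, $B=p^2$; $A=-1$, $B=-3p^2$; and the nine two-parameter shapes of Table \ref{listrank1surfaces} with $p,q$ cube-free, coprime and coprime to $6$. For each shape, \cite[Theorem 6.1]{Desjardins2} decides whether $W(\E_t)$ is genuinely constant in $t$ --- some shapes are discarded here, the generic rank being $1$ while the root number still varies --- and, for the survivors, whether $W(\E_t)=+1$ or $W(\E_t)=-1$ for all $t$. Since the parameters $p,q$ enter only through fixed quadratic symbols that do not depend on the fibre parameter $t$, this analysis is uniform in $p,q$, and collecting the surviving shapes together with their signs produces exactly Table \ref{listeconstantrootnumberrank1}.

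The conceptual content is light: the reduction to the sextic-twist family via Proposition \ref{prop:poly_embed} and the theorem of V\'arilly-Alvarado, and then the appeal to \cite[Theorem 6.1]{Desjardins2}. The main obstacle is the bulk and delicacy of the local computation at $\ell=2$ and $\ell=3$: every fibre has additive reduction there, so each $W_2(\E_t)$ and $W_3(\E_t)$ must be read from the refined local tables according to the $2$-adic and $3$-adic valuations of $At^6+B$ and finer congruences, and this has to be combined with careful bookkeeping of sixth-power representatives, of the interchange of $A$ and $B$, and of the sign conditions. A secondary subtlety is to verify that every shape of Table \ref{listrank1surfaces} whose root number fails to be constant is correctly left out of Table \ref{listeconstantrootnumberrank1}; we confirm the entire case analysis and the sign determination with MAGMA.
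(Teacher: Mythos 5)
Your proposal is correct and follows essentially the same route as the paper: it reduces to the sextic-twist family $y^2=x^3+c(3a^2t^6+b^2)$ (the paper does this via the discussion around Proposition \ref{prop:poly_embed} and V\'arilly-Alvarado's theorem), then combines the rank classification of Corollary \ref{corollaryconstantrootnumber} with the constancy criteria of \cite[Theorem 6.1]{Desjardins2} — i.e.\ the local analysis at $2$ and $3$, the primes $p\ge5$ contributing only the constant sign $(-1)^{\sigma}$ — and finishes by an explicit finite case check. The paper's proof is exactly this, illustrated on one representative case rather than written out exhaustively.
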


\begin{table}
	\begin{center}
		\begin{tabular}{c|c|c|c}
			$A$ & $B$ & Additional condition &Root number of the fibres\\ \hline
			$3^3p^2q^4$&$2^4p^4q^2$&$p^2q^4\equiv4\mod9$&$(-1)^{\sigma+1}$\\
			&			&$p^4q^2\equiv1\mod9$&$(-1)^{\sigma}$\\ \hline
			$3^3$&$2^43^2p^2$ & $p^2\equiv7\mod9$&$+1$\\ \hline
			$-2^43^3p^2q^4$&$-p^4q^2$ &$p^2q^4\equiv1\mod9$&$(-1)^{\sigma+1}$ \\ 
			&&$p^4q^2\equiv4\mod9$&$(-1)^\sigma$\\ \hline
			$-2^43^3p^2$&$-1$&$p^2\equiv1\mod9$&$+1$\\ \hline
			$-2^43^5p^2$&$-1$&any $p,q$&$+1$\\ \hline
		\end{tabular}
	\end{center}
	\caption{List of elliptic surfaces of the form $y^2=x^3+At^6+B$ with constant root number and of generic rank 1. $A$ and $B$ written up to sixth power representatives (and up to switching their role in the equation), and $\gcd(p,q)=1$. Integer $\sigma$ denotes the cardinality of the set $\{\text{prime factors $p_i$ of $pq$: $v_{p_i}(p^2q^4) \equiv 2,4\mod6$, $p_i\equiv2\mod3$}\}$.}
	\label{listeconstantrootnumberrank1}
\end{table}

\begin{remark}
	For those of the surfaces listed above with $W(\E_t)=+1$ for all $t\in\Q$, Corollary \ref{corollaryconstantrootnumber}  the first unconditional proof of the Zariski density of the rational points.
\end{remark}

\begin{remark} Unfortunately, there are many surfaces for which neither our result nor the root number method is decisive. These cases are those with simultaneously 
	\begin{enumerate}
		\item all root numbers are equal to $+1$ and
		\item $rk \E_{A,B}(\Q(t))=0$.
	\end{enumerate}
	We refer to Section \ref{sec:density_gen_rank_0} for a longer discussion on these cases. Moreover, we prove the density on some examples with constant root number with different techniques: \cite[Example 5.1]{VA} (also covered by our Theorem \ref{thmdescription}) and by first author and R. Winter \cite{DW}.
\end{remark}

\begin{proof}[Proof of Theorem \ref{thm:new_examples}]
	This is a consequence of Corollary \ref{corollaryconstantrootnumber} combined with \cite[Theorem 6.1.]{Desjardins2}.
	
	Let $\E$ be an elliptic surface given by the equation $y^2=x^3+c(3a^2t^6+b^2)$.  Write $t=\frac{m}{n}$, where $m,n\in\Z\times\Z_{>0}$ are coprime integers. Then the root number of a fibre is obtained from the formula (\cite[Prop. 4.8]{VA}): 
	\begin{equation}\label{formulern}W(\E_t)=-R(t)\prod_{p^2\mid F(m,n)\atop p\geq5}\begin{cases}
			1&\text{if }v_p(F(m,n))\equiv0,1,3,5\mod6\\
			\left(\frac{-3}{p}\right)&\text{if }v_p(F(m,n))\equiv2,4\mod6\end{cases}
	\end{equation}
	where
	$$R(t)=W_2(\E_t)\left(\frac{-1}{F(m,n)_{(2)}}\right)W_3(\E_t)(-1)^{v_3(F(m,n))},$$
	where $F(m,n)=c(3a^2m^6+b^2n^6)$. For a positive integer $\alpha$, we denote by $\alpha_{(p)}$ the integer such that $\alpha=p^{v_p(\alpha)}\alpha_{(p)}$. The product over $p^2\mid F(m,n)$ equals $(-1)^\sigma$, where $\sigma=\#\{p\mid c \ : \ p\equiv2\mod3\}$ since our choice of $F(m,n)$ has the property that whenever $p\mid F(m,n)$ and $p\nmid c$ for a $p\geq5$, we have
	$$F(m,n)=c(3a^2m^6+b^2n^6)\equiv0\mod p$$ 
	and thus $$\left(\frac{bn^3}{am^3}\right)^2\equiv-3\mod p,$$
	forcing $\left(\frac{-3}{p}\right)=+1$.
	Let us denote the function $\omega_2(t)=W_2(\E_t)\left(\frac{-1}{F(m,n)_{(2)}}\right)$ and $\omega_3(t)=W_3(\E_t)(-1)^{v_3(F(m,n))}$. Observe that we have:
	$$W(\E_t)=(-1)^{\sigma+1}\omega_2(t)\omega_3(t).$$
	The values of $A$ and $B$ for which the functions $\omega_2(t)$ and $\omega_3(t)$ are constant are listed in \cite[Lemma A.1, Lemma A.2]{Desjardins2}. In the first case, this depends on the quantities $v_2(a),v_2(b),v_2(c)$ and $c_{(2)}\mod 4$. In the second case, this depends on the quantities $v_3(a),v_3(b),v_3(c) \mod 6$ and $c_{(3)},a_{(3)}^2,b_{(3)}^2 \mod 9$.
	Let us illustrate the computation that needs to be done with a specific example (doing all cases exhaustively would be too long, and not necessary). 
	
	Let $\alpha,\beta\in\Z$, we study the surface given by the equation $y^2=x^3+3^3\alpha^6t^6+2^4\beta^6$.
	
	We have $v_2(a)=0$, $v_2(b)=2$ and $v_2(c)=0$. Moreover $c_{(2)}\equiv1\mod4$. It follows that $\omega_2(t)=+1$ for all $t\in\Q$ according to \cite[Table 3]{Desjardins2}.
	We have $v_3(a)=1$, $v_3(b)=0$ and $v_3(c)=0$. Moreover $c_{(3)}a_{(3)}^2\equiv1\mod9$. Thus, $\omega_3(t)=-1$ for all $t\in\Q$ according to \cite[Table 2]{Desjardins2}. Hence, for any $t\in\Q$ such that $\E_t$ is non-singular, the root number of this fibre is $$W(\E_t)=-(-1)(+1)=+1.$$
\end{proof}

\section{The structure of the orbits}\label{sec:structure_of_the_orbits}

\subsection{A basis of the Mordell-Weil group: the eight generators}\label{sec:preliminaries_on_MW_groups}

Let $k$ be a field of characteristic different from $2$ and $3$ and let $A,B\in k$ be two non-zero elements of $k$. 
Let $E_{A,B}:y^2 z=x^3+(At^6+B)z^3$ denote an elliptic curve over a field of rational functions $k(t)$.

Let $\mathcal{E}_{A,B}$ denote an elliptic surface attached to $E_{A,B}$. It is a smooth projective surface with a morphism $\pi:\mathcal{E}_{A,B}\rightarrow\mathbb{P}^{1}$ which is the natural projection $(x,y,z,t)\mapsto t$. We denote by $\mathcal{O}$ the zero section $\mathcal{O}:\mathbb{P}^{1}\rightarrow \mathcal{E}_{A,B}$ obtained by extending the zero point on $E_{A,B}$.

To each point $P\in E_{A,B}(\overline{k}(t))$ we associate a section $\sigma_{P}:\mathbb{P}^{1}\rightarrow\mathcal{E}_{A,B}$ and denote by $\overline{P}$ the $(-1)$-curve which is the image of $\sigma_{P}$. We denote by $\overline{P}.\overline{Q}$ the intersection number of $\overline{P}$ against $\overline{Q}$ on $\mathcal{E}_{A,B}$.
The intersection $\overline{P}.\overline{Q}$ (for $P\neq Q$) is computed as a sum of local intersection numbers (\cite[III \S 7, \S 9]{Silvermand_advanced})
\[\overline{P}.\overline{Q} = \sum_{t\in \mathbb{P}^1}(P,Q)_{t}.\]

In fact, the translation by point map on the generic fibre $E_{A,B}$ extends to an automorphism of $\mathcal{E}_{A,B}$, \cite[III Prop. 9.1]{Silvermand_advanced}, hence $\overline{P}.\overline{Q}=\overline{P-Q}.\overline{O}$. 

The intersection number $\overline{P}.\overline{O}$ is computed by the formula
\[\overline{P}.\overline{O}=\deg(q(t))+\delta\]
where $x(P)=\frac{p(t)}{q(t)^2}$ is the $x$-coordinate of $P$ given in terms of two coprime polynomials $p,q$. A non-negative integer $\delta$ is computed in the integral model of $E_{A,B}$ at $\infty$ where $t=1/s$ and the local model is $y^2=x^3+A+Bs^6$. Number $\delta$ satisfies the condition $\frac{p(1/s)}{q(1/s)^2}s^2=\frac{\tilde{p}(s)}{\tilde{q}(s)^2 s^{2\delta}}$ where $\tilde{p}$ and $\tilde{q}$ are coprime to $s$ and $\delta=\max\{0,\deg p/2-\deg q-1\}$.

Group $E_{A,B}(\overline{k}(t))$ is equipped with the height pairing (\cite[III \S 4, \S 9]{Silvermand_advanced}) defined for two given points $P$, $Q$ by the formula (\cite{Shioda_MW})
\[\langle P,Q\rangle = 1+\overline{P}.\overline{O}+\overline{Q}.\overline{O}-\overline{P}.\overline{Q}.\]
We denote by $\langle P,P\rangle$ the height of the point $P$ and have the simplified formula $\langle P,P\rangle = 2+2\overline{P}.\overline{O}\geq 0$. In particular, it implies that we have no non-trivial torsion points and $\langle P,P\rangle=2$ if and only if $\overline{P}.\overline{O}=0.$ The latter condition means that the $x$-coordinate of $P$ is a polynomial in $t$ of degree at most $2$.
Group $E_{A,B}(\overline{k}(t)$ with the pairing $\langle\cdot,\cdot\rangle$ forms a positive definite lattice. 
This is a special case of the general theory of the Mordell-Weil lattices, cf. \cite{Shioda_MW}.

We use throughout the rest of the paper the symbols of standard lattices $A_n$, $D_n$ and $E_n$ which correspond to Dynkin diagrams with the same notation, cf. \cite[Chap. 6]{Bourbaki_Lie}.
\begin{proposition}\label{prop:pt_structure_on_ell_surf}
	Let $A,B\in k\setminus\{0\}$. The group $E_{A,B}(\overline{k}(t))$ has rank $8$ and no non-trivial torsion elements. The generators are contained in the set of points $P$ of the form
	\[P=(a t^2+bt+c,a_1 t^3+a_2 t^2+a_3 t+a_4),\quad a,b,c,a_1,a_2,a_3,a_4\in\overline{k}.\]
\end{proposition}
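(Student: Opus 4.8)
The plan is to establish the three assertions in turn: the rank is $8$, the group is torsion-free, and every minimal generator has the claimed polynomial shape.

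\medskip

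\textbf{Rank and torsion.} Since $\mathcal{E}_{A,B}$ is a rational elliptic surface with Weierstrass equation $y^2=x^3+(At^6+B)$ and $At^6+B$ is squarefree of degree $6$ for generic (indeed all nonzero) $A,B$, all singular fibres are irreducible: I would first locate them, namely fibres of type $II$ over the six roots of $At^6+B$ and (after moving to $s=1/t$, where the equation becomes $y^2=x^3+A+Bs^6$) a fibre of type $II$ at $s=0$ as well, checking that the local equation there is smooth enough that this is again type $II$. Actually, more carefully, I would verify the Euler number adds up to $12$: seven fibres of type $II$ contribute $7\cdot 2=14$, which is too many, so the fibre at infinity must be smooth and what we really have is six $II$ fibres plus six $I_1$-type contributions, or simply cite that $y^2=x^3+\text{(squarefree deg }6)$ has trivial $j$-invariant and the configuration is known. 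In any case, by Shioda--Tate for a rational elliptic surface, $\rank E_{A,B}(\overline{k}(t)) = 8 - \sum_v (m_v-1)$ where $m_v$ is the number of components of the fibre over $v$; since all fibres are irreducible, $m_v=1$ for all $v$ and the rank is exactly $8$. For torsion: the torsion subgroup injects into the group of components of each fibre, which is trivial for all irreducible fibres, so $E_{A,B}(\overline{k}(t))_{\mathrm{tors}}=0$. Alternatively, since the Mordell--Weil lattice is then the full $E_8$ lattice, which is unimodular, and torsion would make the narrow lattice a proper sublattice, one gets torsion-freeness directly. This step I expect to be routine; the only care needed is the behaviour of the fibre at infinity.

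\medskip

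\textbf{Shape of the generators.} A set of generators can be chosen among the sections of minimal height. By the simplified height formula $\langle P,P\rangle = 2 + 2\,\overline{P}.\overline{O}$ and positivity of intersection numbers, the minimal nonzero height is $2$, attained exactly when $\overline{P}.\overline{O}=0$. I would argue that the $240$ vectors of norm $2$ in the $E_8$ lattice do generate $E_8$ (indeed the roots generate any root lattice), so it suffices to show every point $P$ of height $2$ has the stated form. Write $x(P)=p(t)/q(t)^2$ with $p,q$ coprime. The condition $\overline{P}.\overline{O}=0$ forces both $\deg q = 0$ (so $q$ is constant, i.e. $x(P)$ is a polynomial) and $\delta=0$ at infinity, the latter meaning that after the substitution $t=1/s$ the quantity $x(P)\cdot s^2$ is regular and nonvanishing at $s=0$, i.e. $\deg_t x(P) = 2$ exactly. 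Hence $x(P)=at^2+bt+c$. Substituting into the Weierstrass equation, $y(P)^2 = (at^2+bt+c)^3 + At^6+B$ is a polynomial of degree $\le 6$ in $t$ which is a square in $\overline{k}(t)$; since $\overline{k}[t]$ is a UFD and the polynomial is visibly coprime-to-square-factor generically, $y(P)$ itself must be a polynomial, necessarily of degree $3$: $y(P)=a_1t^3+a_2t^2+a_3t+a_4$. This gives the asserted form.

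\medskip

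The main obstacle is the bookkeeping at the fibre $t=\infty$: one must be careful that "minimal height $2$" really does correspond to $x$-coordinate being a quadratic polynomial and $y$-coordinate a cubic, and in particular that the local contribution $\delta$ vanishes precisely under the degree condition $\deg_t x(P)=2$ rather than $\le 2$. Everything else — the Shioda--Tate rank count, triviality of torsion via irreducible fibres, and the fact that norm-$2$ vectors generate $E_8$ — is standard Mordell--Weil lattice theory as in \cite{Shioda_MW,Oguiso_Shioda}. I would also remark that the converse (that such $(a,b,c,a_1,\dots,a_4)$ give genuine height-$2$ points, cutting out an explicit affine variety) is what the subsequent sections exploit, but for the proposition only the inclusion "generators $\subseteq$ this set" is needed.
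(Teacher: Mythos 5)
Your route is the same as the paper's (rationality of the surface, irreducibility of all fibres, Shioda--Tate for rank $8$, torsion-freeness from irreducibility, reduction to height-$2$ points, then degree bounds on the coordinates), but the one step you yourself single out as the crux is stated incorrectly. You claim that $\delta=0$ forces $x(P)\cdot s^2$ to be regular \emph{and nonvanishing} at $s=0$, i.e.\ $\deg_t x(P)=2$ exactly (and $\deg_t y(P)=3$ exactly). In fact $\delta=0$ only requires $x(P)\,s^2$ to have no pole at $s=0$, i.e.\ $\deg_t x(P)\le 2$; vanishing there just means the section meets the fibre at infinity away from the zero section, which costs nothing. Height-$2$ points with smaller degree genuinely occur: the points $(-\zeta_3^i\sqrt[3]{B},\pm\sqrt{A}\,t^3)$ of the orbit $O_1$ (two of which are among the eight generators used later in the paper) have \emph{constant} $x$-coordinate, and $R,S$ in $O_3$ have linear $x$-coordinate, so with your ``exactly'' you would wrongly conclude these do not have height $2$ and would exclude some generators. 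Since the proposition only asserts containment in the set of points $(at^2+bt+c,\,a_1t^3+a_2t^2+a_3t+a_4)$ with the coefficients allowed to vanish, the correct weaker bounds $\deg_t x\le 2$, $\deg_t y\le 3$ are all that is needed, so the argument is repaired by replacing ``$=$ exactly'' with ``$\le$''; but as written the implication is false.

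Two smaller points. In the Euler-number bookkeeping, six type $II$ fibres already contribute $6\cdot 2=12=e(\mathcal{E}_{A,B})$, so the configuration is six type $II$ fibres over the roots of $At^6+B$ and a \emph{smooth} fibre at infinity; there is no room for (and, since $j\equiv 0$, no possibility of) extra $I_1$ fibres, so ``six $II$ plus six $I_1$-type contributions'' is wrong -- the paper avoids this by directly checking that the model $y^2=x^3+A+Bs^6$ is smooth at $s=0$ for $A\neq 0$. Also, torsion does not inject into the component group of a single fibre in general; the statement you want is that a torsion section meeting the identity component of every fibre is trivial (equivalently, with all fibres irreducible the height formula gives $\langle P,P\rangle=2+2\,\overline{P}.\overline{O}>0$ for $P\neq O$), which suffices here since all component groups are trivial. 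Apart from these corrections, your argument (including the observation that the $240$ roots generate $E_8$, where the paper instead invokes the Oguiso--Shioda classification) matches the paper's proof.
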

\begin{proof}
	The equation $E_{A,B}$ defines a smooth cubic over $\overline{k}(t)$ for $At^6+B$ non-zero. When both $A,B$ are non-zero the cubic equation $E_{A,B}$ is not isomorphic to a constant  cubic defined over $k$. 
	From \cite[10.14]{Shioda_MW} it follows that $\mathcal{E}_{A,B}$ is a rational elliptic surface.
	The discriminant of $E_{A,B}$ equals $-432 \left(A t^6+B\right)^2$. For $A,B$ non-zero the polynomial $A t^6+B$ is separable, 
	hence from the Tate algorithm \cite{Tate_algorithm}, \cite[IV.9.4]{Silvermand_advanced} it follows that the equation is minimal at all finite places of $\overline{k}(t)$ and at the places corresponding to the solutions of $At^6+B=0$ the reduction is of type $II$.
	To analyse the model at infinity $1/t$ we apply the change of coordinates $s=1/t$ and compute the integral model $y^2=x^3+A+Bs^6$ which is smooth at $s=0$ for $A\neq 0$.
	
	From the Shioda-Tate formula \cite[Thm. 7.4]{Shioda_MW} and \cite[Lem. 10.1]{Shioda_MW} it follows that the rank of $E_{A,B}(\overline{k}(t))$ is $8$. Classification of rational elliptic surfaces by Oguiso-Shioda \cite{Oguiso_Shioda} implies that the generators of the group $E_{A,B}(\overline{k}(t))$ have height $2$ and there is no non-zero torsion point. The group $E_{A,B}(\overline{k}(t))$ with the height pairing $\langle\cdot,\cdot\rangle$ forms an integral lattice of type $E_{8}$. Since the height of each generator $P$ is $2$, we obtain that in the model $E_{A,B}$ the coordinates of the point $P$ are polynomials in $\overline{k}[t]$ of degrees $2$ and $3$, respectively.
\end{proof}

\begin{proposition}\label{proposition:orbits}
	Let $A,B\in\mathbb{Q}^{\times}$. The subset of $E_{A,B}(\overline{\mathbb{Q}}(t))$ of elements of height $2$ decomposes into $8$ disjoint subsets $O_{i}$ for $i=1,\ldots,8$. Each set $O_{i}$ is stable under the action of the absolute Galois group $G_{\mathbb{Q}}=\Gal(\overline{\mathbb{Q}}/\mathbb{Q})$ on the coordinates of the points. 
	
	For $A,B$ sufficiently generic the orbits $O_{i}$ do not decompose into smaller $G_{\mathbb{Q}}$-stable subsets.
\end{proposition}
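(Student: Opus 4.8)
\emph{Proof proposal.} The plan is to write down the $240$ points of height $2$ explicitly and to read off both claims from their coordinates. From the structure of the Mordell--Weil lattice established above, every such point has the shape $P=(x(t),y(t))$ with $x(t)=at^{2}+bt+c$ and $y(t)=a_{1}t^{3}+a_{2}t^{2}+a_{3}t+a_{4}$, so ``$P\in E_{A,B}$'' is equivalent to the seven polynomial equations obtained by equating the coefficients of $t^{0},\dots,t^{6}$ on the two sides of $y(t)^{2}-x(t)^{3}=At^{6}+B$; two of them read $a_{1}^{2}-a^{3}=A$ and $a_{4}^{2}-c^{3}=B$. First I would solve this system over $\overline{\mathbb{Q}}$ by branching on which of the leading coefficients $a,b,a_{1},a_{2}$ vanish. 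If $a=0$ the equations force $a_{2}=a_{3}=0$, and one gets two families: the $6$ points $\bigl((-B)^{1/3},\pm\sqrt{A}\,t^{3}\bigr)$, and the $12$ points $\bigl(bt,\,a_{1}t^{3}+a_{4}\bigr)$ with $a_{1}^{2}=A$, $a_{4}^{2}=B$, $b^{3}=2a_{1}a_{4}$. If $a\neq0$ and $b=0$ there are three families, according as $(a_{1}=a_{2}=0)$, $(a_{1}=0\neq a_{2})$, or $(a_{1}\neq0)$: the ``dual'' $6$ points $\bigl((-A)^{1/3}t^{2},\pm\sqrt{B}\bigr)$; an $18$-point family with $y$ even in $t$, cut out by $a^{3}=-A$ and $a_{2}^{6}=-108A^{2}B$; and an $18$-point family with $y$ odd, cut out by $a^{3}=-4A$ and $c^{3}=-B$. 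The remaining stratum $a\neq0\neq b$ forces $a_{1},a_{2}\neq0$, so that $a_{2}=3a^{2}b/(2a_{1})$, and it carries the other $240-60=180$ points; eliminating $b,c,a_{3},a_{4}$ leaves a single polynomial equation over $\mathbb{Q}(A,B)$ whose factorisation displays three further families. Altogether this gives eight families $O_{1},\dots,O_{8}$. Each is defined by conditions --- which coefficients vanish, plus polynomial relations with coefficients in $\mathbb{Z}[A,B]$ --- that are visibly stable under $\Gal(\overline{\mathbb{Q}}/\mathbb{Q})$, and a length count case by case shows the $O_{i}$ are disjoint, nonempty, and exhaust all $240$ points for every $A,B\in\mathbb{Q}^{\times}$; this is the first assertion.

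For the genericity statement I would show that, for $(A,B)$ outside a thin subset of $(\mathbb{Q}^{\times})^{2}$, the number field $\mathbb{Q}(P)$ generated by the coordinates of a point $P\in O_{i}$ has degree exactly $\#O_{i}$ over $\mathbb{Q}$. Since the $G_{\mathbb{Q}}$-orbit of $P$ has cardinality $[\mathbb{Q}(P):\mathbb{Q}]$ and is contained in the finite set $O_{i}$, this equality forces $G_{\mathbb{Q}}\cdot P=O_{i}$, so $O_{i}$ does not break up further. For the five ``small'' families the degree is immediate: $\mathbb{Q}(P)$ is an explicit compositum of radical extensions attached to polynomials such as $x^{2}-A$, $x^{2}-B$, $x^{3}+A$, $x^{3}+4A$, $x^{6}-4AB$, $x^{6}+108A$, $x^{6}+108B$, and each of the finitely many conditions making these irreducible and pairwise linearly disjoint (for instance that none of $A,B,AB,-3A,-3B$ is a square and none of $-A,-B,-4A,-4B,4AB$ is a cube) excludes only a thin set, so their union is still thin. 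For the three families in the $a\neq0\neq b$ stratum one argues the same way with the resolvents coming from the elimination. Alternatively, one can avoid the explicit conditions and invoke Hilbert's irreducibility theorem for the splitting field of the $240$ points over the rational function field $\mathbb{Q}(A,B)$: a generic specialisation preserves the Galois group, hence the partition into orbits.

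The main obstacle is the stratum $a\neq0\neq b$. In the other strata solving the polynomial system is a one- or two-step hand computation, whereas here the elimination is genuinely heavy, and one must verify both that it yields exactly three $\mathbb{Q}$-rational families which are always disjoint and nonempty --- so that the unconditional half of the proposition holds for all $A,B$ and not just generically --- and the Galois groups of the three resulting resolvents for the genericity half. This is exactly where the MAGMA verification mentioned throughout the paper does most of the work; the rest is elementary Galois theory on explicit radicals.
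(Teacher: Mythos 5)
Your proposal is correct and follows essentially the same route as the paper: both set up the seven coefficient equations in $a,b,c,a_1,\dots,a_4$ and decompose the resulting zero-dimensional scheme over $\mathbb{Q}(A,B)$ into eight components of degrees $6,6,12,18,18,36,36,108$, whose definability over $\mathbb{Q}(A,B)$ gives Galois stability and whose generic irreducibility gives the second claim --- the paper simply delegates the entire decomposition to a Magma elimination/primary-decomposition computation, whereas you recover the five small strata by hand (correctly, matching $O_1$--$O_5$ and their sizes) and defer the $a\neq 0\neq b$ stratum to the same machine computation. Your degree-count/Hilbert-irreducibility justification of ``sufficiently generic'' is a more explicit account of what the paper only indicates by pointing to the decision algorithm of Section \ref{sec:proof_main_theorem}.
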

\begin{remark}
	The term ''sufficiently generic'' refers to the detailed decision algorithm described in Section \ref{sec:proof_main_theorem}.
\end{remark}
\begin{proof}
	Steps of the algorithm:
	\begin{itemize}
		\item[1.] Form an ideal $I=(c_i)_{i=0}^{6}$ in the polynomial ring $$S[a,b,c,a_1,a_2,a_3,a_4]$$
		where $S=\mathbb{Q}(A,B)$, $x=x(P)$, $y=y(P)$ and $$y^2-(x^3+At^6+B)=\sum_{i=0}^{6}c_i t^i.$$ We have
		\begin{align*}
			I&=(a_4^2-B-c^3,2 a_3 a_4-3 b c^2,-3 a c^2+a_3^2+2 a_2 a_4-3 b^2 c,\\
			&-6 a b c+2 a_2 a_3+2 a_1 a_4-b^3,\\
			&-3 a^2 c-3 a b^2+a_2^2+2 a_1 a_3,2 a_1 a_2-3 a^2 b,-a^3+a_1^2-A).
		\end{align*}		
		\item[2.] Compute the elimination ideal $J$ in $S[a,b,c]$ with respect to $a,b,c$.
		\item[3.] Compute the primary decomposition $\bigcap_{i} J_{i}$ of the ideal $J$.
		\item[4.] Compute the Groebner basis $\{b_i\}$ with respect to the standard lexicographical order of the ideals spanned by $J_i\cup I$.
		\item[5.] Form a scheme $O_i$ which is the vanishing locus of the system $\{b_i\}$.
	\end{itemize}
	On the output we obtain the defining equations of the components $O_{i}$ computed in the step $5$. There are exactly eight of the them, so $S_{red}=\amalg_{i=1}^{8}O_i$ where each scheme $O_i$ is of dimension $0$, reduced and irreducible over $\mathbb{Q}(A,B)$. For $1\leq i\leq 8$ the degree of the scheme $O_i$ equals $6, 6, 12,$ $18, 18,$ $36, 36,$ $108$, respectively.
\end{proof}
In the following paragraphs we discuss the structure of the smallest orbits $O_{1}$, $O_{2}$ and $O_{3}$. The main result of this section is a proof that there exist $8$ linearly independent points in those three orbits. Therefore it is enough to study those $8$ points in order to determine the rank of the group $E_{A,B}(\mathbb{Q}(t))$ (since there are no torsion points).

We denote by $\mathcal{P}(O)$ the set of points in $E(\overline{\mathbb{Q}}(t))$ which correspond to the $O(\overline{\mathbb{Q}})$-points of the scheme $O$ in $S_{red}$. By abuse of notation we also say that the set $O$ contains a point $P$, if $P\in \mathcal{P}(O)$.
Let $\mathcal{L}(O)$ denote the $\mathbb{Z}$-span of the points in the set $O$.

\begin{remark}
	To verify that for each choice of non-zero elements $A,B$ the statements below hold (there is no degeneration) we have used a computer and the package Magma. The computation essentially reduces to a calculation similar to the one presented in the proof of Proposition \ref{prop:Finite_index_sublattice}. 
	In fact, for each proof below we verify the relations between points over the function field extension $\overline{\mathbb{Q}}(A^{1/6},B^{1/6})$ of $\overline{\mathbb{Q}}(A,B)$ and check that the lattice produced from the points in the orbit (and the Gram matrix of a given basis) are the same for every choice of nonzero $A,B$. The verification code is available on the website of one of the authors \cite{Magma_code}.
\end{remark}

\begin{remark}
	To simplify our notation we have adopted the following convention. For a given non-zero rational number $A\in\mathbb{Q}$, we find a polynomial factorization $x^6-A=\prod_{i}p_i(x)$, where $p_i(x)\in\mathbb{Q}[x]$ are monic, irreducible over $\mathbb{Q}$ and ordered by the degree: $\deg p_1\leq \deg p_2\leq \cdots $. We denote by $A^{1/6}$ any fixed root in $\overline{\mathbb{Q}}$ of the polynomial $p_1(x)$. We define in analogy the symbol $B^{1/6}$ for any non-zero rational number $B\in\mathbb{Q}$. 
	
	We denote by $\zeta_3$ an arbitrarily fixed root in $\overline{\mathbb{Q}}$ of the polynomial $x^2+x+1$ and we define $\sqrt{-3}$ to be $1+2\zeta_3$. 
	
	We denote by $2^{1/3}$ an arbitrarily fixed root in $\overline{\mathbb{Q}}$ of the polynomial $x^3-2$. If either $(A^{1/6})^2$ or  $(B^{1/6})^2$ is a root of $x^3-2$, we choose $2^{1/3}$ to be equal to one of these values.
	
	In the sections below, we use expressions of the form $\sqrt[k]{c A^{e}B^{f}}$ with $k\in \{2,3,6\}$ and $e,f\in \{0,1\}$. We define $\sqrt[k]{c A^{e}B^{f}}$ as $\sqrt[k]{c}\cdot (A^{1/6})^{6e/k}\cdot (B^{1/6})^{6f/k}$. We also fix that $\sqrt[3]{-1}=-1$.
\end{remark}
\subsubsection{Orbit $O_1$} 
The orbit $O_1$ is defined by the following conditions
\begin{align*}
	a_1^2&=A,\ c^3=-B,\\
	a_2&=a_3=a_4=a=b=0
\end{align*}
and determines a list of the following points
\[\mathcal{P}(O_1) = \{(-\zeta_3^i \sqrt[3]{B},\pm \sqrt{A} t^3): i \in \{0,1,2\} \}.\]
The set $\mathcal{P}(O_1)$ consists of $6$ points, each of height $2$ (under assumption that $AB\neq 0$). A configuration of the curves in the orbit $O_{1}$ is described in Figure~\ref{fig:orbit_1}. 

\definecolor{dtsfsf}{rgb}{0.8274509803921568,0.1843137254901961,0.1843137254901961}
\definecolor{rvwvcq}{rgb}{0.08235294117647059,0.396078431372549,0.7529411764705882}
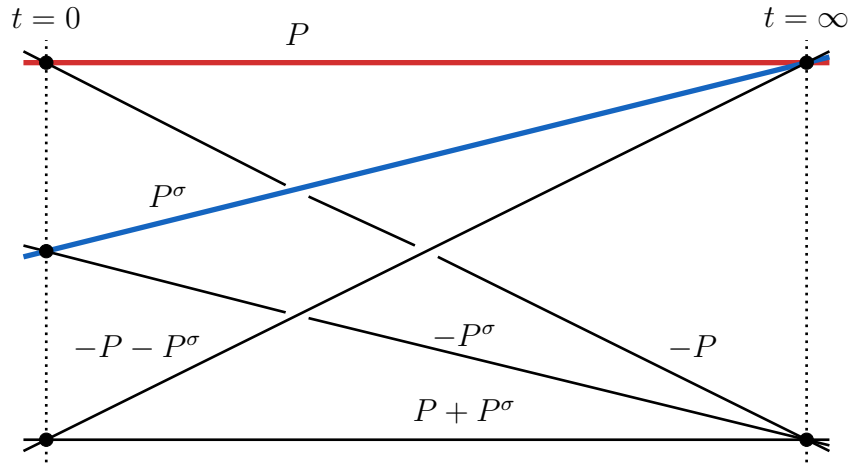
\begin{figure}
	\centering
	\begin{tikzpicture}[x=1cm,y=1cm]
		\newcommand{\yheight}{5.0}
		\newcommand{\xwidth}{10.0}
		\def\ratio{\yheight/\xwidth}
		\def\eps{0.15}
		\coordinate (LB) at (0,0); 
		\coordinate (RB) at (\xwidth,0); 
		\coordinate (LM) at (0,0.5*\yheight); 
		\coordinate (LT) at (0,\yheight); 
		\coordinate (RT) at (\xwidth,\yheight); 
		
		\draw [line width=2pt, color=dtsfsf](0-2*\eps,\yheight)--(\xwidth+2*\eps,\yheight); 
		\draw [line width=1pt](0-2*\eps,0)--(\xwidth+2*\eps,0); 
		
		\draw [line width=1pt, dotted](0,0-2*\eps)--(0,\yheight+2*\eps);
		\draw [line width=1pt, dotted](\xwidth,0-2*\eps)--(\xwidth,\yheight+2*\eps); 
		
		\path [line width=1pt,color=black] 
		(0-2*\eps,0.5*\yheight+\ratio*0.5*2*\eps) edge (0.33*\xwidth-\eps,0.33*\yheight+\ratio*0.5*\eps)
		(0.33*\xwidth-\eps,0.33*\yheight+\ratio*0.5*\eps) edge[white] (0.33*\xwidth+\eps,0.33*\yheight-\ratio*0.5*\eps)
		(0.33*\xwidth+\eps,0.33*\yheight-\ratio*0.5*\eps) edge (\xwidth+2*\eps,0-\ratio*0.5*2*\eps); 
		
		\path [line width=1pt,color=black] 
		(0-2*\eps,\yheight+\ratio*2*\eps) edge (0.33*\xwidth-\eps,0.66*\yheight+\ratio*\eps)
		(0.33*\xwidth-\eps,0.66*\yheight+\ratio*\eps) edge[white] (0.33*\xwidth+\eps,0.66*\yheight-\ratio*\eps)
		(0.33*\xwidth+\eps,0.66*\yheight-\ratio*\eps) edge (0.5*\xwidth-\eps,0.5*\yheight+\ratio*\eps)
		(0.5*\xwidth-\eps,0.5*\yheight+\ratio*\eps) edge[white] (0.5*\xwidth+\eps,0.5*\yheight-\ratio*\eps)
		(0.5*\xwidth+\eps,0.5*\yheight-\ratio*\eps) edge (\xwidth+2*\eps,0-\ratio*2*\eps); 
		
		\draw [line width=2pt,color=rvwvcq] (0-2*\eps,0.5*\yheight- \ratio*0.5*2*\eps)--(\xwidth+2*\eps,\yheight+\ratio*0.5*2*\eps); 
		
		\draw [line width=1pt,color=black] (0-2*\eps,-\ratio*2*\eps)--(\xwidth+2*\eps,\yheight+\ratio*2*\eps); 
		
		\draw [fill=black] (LM) circle (2.5pt); 
		\draw [fill=black] (RT) circle (2.5pt); 
		\draw [fill=black] (RB) circle (2.5pt); 
		\draw [fill=black] (LB) circle (2.5pt); 
		\draw [fill=black] (LT) circle (2.5pt); 
		
		\draw node at (0,\yheight+8*\ratio*\eps){$t=0$};
		\draw node at (\xwidth,\yheight+8*\ratio*\eps){$t=\infty$};
		\draw node at (0.33*\xwidth,\yheight+5*\ratio*\eps){$P$};
		\draw node at (0.16*\xwidth,0.57*\yheight+5*\ratio*\eps){$P^{\sigma}$};
		\draw node at (0.12*\xwidth,0.17*\yheight+5*\ratio*\eps){$-P-P^{\sigma}$};
		\draw node at (0.85*\xwidth,0.17*\yheight+5*\ratio*\eps){$-P$};
		\draw node at (0.55*\xwidth,0.20*\yheight+5*\ratio*\eps){$-P^{\sigma}$};
		\draw node at (0.55*\xwidth,0*\yheight+5*\ratio*\eps){$P+P^{\sigma}$};
		
	\end{tikzpicture}
	\caption{\label{fig:orbit_1} Configuration of the lines in $\mathcal{P}(O_1)$ on the surface.}
\end{figure}

Let $K$ denote the field $\mathbb{Q}(\sqrt{A},\sqrt[3]{B},\zeta_{3})$, $\zeta_3^3=1$ and $\zeta_3\neq 1$. Consider a map $\sigma_{K}: \sqrt[3]{B}\mapsto \zeta_{3}\sqrt[3]{B}$ which fixes the other generators of $K$. If it extends to an automorphism of $K$, it fixes the subfield $\mathbb{Q}(\sqrt{A},\zeta_3)$. 
\begin{remark}
	In the case when $A$ and $B$ are such that the corresponding automorphism does not exist we denote by $T^\sigma$ the point obtained from $T$ by substitution of $\zeta_{3}\sqrt[3]{\alpha}$ in place of $\sqrt[3]{\alpha}$.
\end{remark}

\begin{proposition}\label{prop:Point_P_properties}
	Let $A,B$ be non-zero elements in $\mathbb{Q}$. Then a point $P=(-\sqrt[3]{B},\sqrt{A} t^3)$ lies on the elliptic curve $E=E_{A,B}$ and
	\[P+P^{\sigma_{K}}+P^{\sigma_{K}^{2}}=0.\]
	Moreover, the height $\langle P,P\rangle$ equals $2$ and the height pairing matrix of the points $P,P^{\sigma_{K}}$ is 
	\[M:=\left(\begin{array}{cc}
		2 & -1 \\
		-1 & 2 \\
		
	\end{array}\right).\]
	In particular, the points $P,P^{\sigma_{K}}$ are linearly independent in $E(\overline{\mathbb{Q}}(t))$. Moreover, the $\mathbb{Z}$-linear span $\mathcal{L}(O_1)$ has rank $2$ and basis $P,P^{\sigma_{K}}$.
\end{proposition}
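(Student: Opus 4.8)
The plan is to verify each assertion by direct computation, using that the three conjugates $P, P^{\sigma_K}, P^{\sigma_K^2}$ all share the $y$-coordinate $\sqrt{A}\,t^3$. Membership of $P$ on $E_{A,B}$ is immediate: substituting $x=-\sqrt[3]{B}$, $y=\sqrt{A}\,t^3$ gives $A t^6 = (-\sqrt[3]{B})^3 + At^6+B = At^6$, and the same identity holds for the two conjugates since $(-\zeta_3^i\sqrt[3]{B})^3=-B$ for all $i$. For the relation $P+P^{\sigma_K}+P^{\sigma_K^2}=0$, I would observe that the three points $(-\zeta_3^i\sqrt[3]{B},\sqrt{A}\,t^3)$ lie on the line $y=\sqrt{A}\,t^3$, and that setting $y=\sqrt{A}\,t^3$ in the Weierstrass equation reduces it to $x^3+B=0$, whose roots are precisely $-\zeta_3^i\sqrt[3]{B}$; thus these are the three intersection points of a line with $E$, and by the group law (identity at infinity) their sum is zero. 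I would emphasise that this is an identity of points in $E(\overline{\mathbb{Q}}(t))$, so it is valid whether or not $\sigma_K$ extends to an automorphism of $K$ — one simply reads $P^{\sigma_K}$, $P^{\sigma_K^2}$ as the results of the substitution, as in the Remark preceding the statement.

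Next I compute the height of $P$ from $\langle P,P\rangle = 2+2\,\overline{P}.\overline{O}$, so it suffices to show $\overline{P}.\overline{O}=0$. Since $x(P)=-\sqrt[3]{B}$ is constant, in the notation $x(P)=p(t)/q(t)^2$ we may take $q=1$, so $\deg q=0$ and there is no contribution at the finite places. At $t=\infty$ I pass to $s=1/t$ and the integral model $Y^2=X^3+A+Bs^6$ with $X=s^2x$, $Y=s^3y$; then $X(P)=-\sqrt[3]{B}\,s^2$ and $Y(P)=\sqrt{A}$, so over $s=0$ the section $\overline{P}$ meets the fibre in the affine point $(0,\sqrt{A})$, which is disjoint from the zero section $[0:1:0]$. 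Hence $\delta=0$, $\overline{P}.\overline{O}=0$, and $\langle P,P\rangle=2$; by the identical argument $\langle P^{\sigma_K},P^{\sigma_K}\rangle=2$.

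For the off-diagonal entry I use $P^{\sigma_K^2}=-(P+P^{\sigma_K})$ together with the fact that the height is a quadratic form:
\[
2=\langle P^{\sigma_K^2},P^{\sigma_K^2}\rangle=\langle P+P^{\sigma_K},P+P^{\sigma_K}\rangle=\langle P,P\rangle+2\langle P,P^{\sigma_K}\rangle+\langle P^{\sigma_K},P^{\sigma_K}\rangle=4+2\langle P,P^{\sigma_K}\rangle,
\]
so $\langle P,P^{\sigma_K}\rangle=-1$ and the pairing matrix is $M$. Since $\det M=3\neq 0$, the form is non-degenerate, and as $E(\overline{\mathbb{Q}}(t))$ is torsion-free this forces $P,P^{\sigma_K}$ to be $\mathbb{Z}$-linearly independent. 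Finally, the six points of $\mathcal{P}(O_1)$ are $\pm P,\pm P^{\sigma_K},\pm P^{\sigma_K^2}$ (negation in a short Weierstrass model changes only the sign of $y$), and $P^{\sigma_K^2}=-(P+P^{\sigma_K})$, so $\mathcal{L}(O_1)=\mathbb{Z}P+\mathbb{Z}P^{\sigma_K}$ is of rank $2$ with basis $P,P^{\sigma_K}$.

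All the steps are elementary; the only one requiring some care is the local analysis at $t=\infty$ showing $\delta=0$ (i.e. that $\overline{P}$ avoids the zero section over $\infty$), and — more as a matter of bookkeeping than difficulty — ensuring every relation is stated at the level of points in $E(\overline{\mathbb{Q}}(t))$ so that the argument is insensitive to whether $\sigma_K$ is a genuine field automorphism.
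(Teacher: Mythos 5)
Your proof is correct and follows essentially the same route as the paper: verify membership directly, compute $\langle P,P\rangle=2$ via the intersection formula (with $\deg q=0$ and $\delta=0$ at $t=\infty$), get $\langle P,P^{\sigma_K}\rangle=-1$ from the height of $P+P^{\sigma_K}=-P^{\sigma_K^2}$ by bilinearity, and conclude rank $2$ from $\det M\neq 0$ together with the observation that $\mathcal{P}(O_1)=\{\pm P,\pm P^{\sigma_K},\pm(P+P^{\sigma_K})\}$. The only differences are cosmetic: you make explicit the collinearity argument for $P+P^{\sigma_K}+P^{\sigma_K^2}=0$ and the local analysis at $s=0$, which the paper leaves as a direct computation.
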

\begin{proof}
	The first equality follows from a direct computation. The height $\langle P,P\rangle$ of $P$ and the height $\langle P^{\sigma_{K}},P^{\sigma_{K}}\rangle$ of $P^{\sigma_{K}}$ is computed with the formula described in Section \ref{sec:preliminaries_on_MW_groups}.
	
	To compute the height pairing matrix $M$ we need only to check that the height of $P+P^{\sigma_{K}}$ is $2$ and the equality $\langle P,P^{\sigma_{K}}\rangle =-1$ follows from bilinearity of the pairing.
	The determinant of $M$ is non-zero, hence the points $P$ and $P^{\sigma_{K}}$ are linearly independent over $\mathbb{Z}$.
	Finally, observe that $\mathcal{P}(O_1)$ has $6$ elements which coincide with the elements of the set $\{\pm P,\pm P^{\sigma_{K}},\pm P \pm P^{\sigma_{K}}   \}.$
\end{proof}

\subsubsection{Orbit $O_2$} 
The orbit $O_2$ is defined by the following equations
\begin{align*}
	a_4^2&=B,a^3=-A,\\
	a_1&=a_2=a_3=b=c=0.
\end{align*}
Configuration of the curves in the orbit $O_{2}$ is is very similar to Figure \ref{fig:orbit_1}.
Let $K'$ denote the field $\mathbb{Q}(\sqrt{B},\sqrt[3]{A},\zeta_3)$ and let $\sigma_{K'}:\sqrt[3]{A}\mapsto \zeta_3\sqrt[3]{A}$, which fixes the other generators of $K'$, denote a map which if extended to an automorphism of $K'$ fixes the subfield $\mathbb{Q}(\sqrt{B},\zeta_3)$. We consider a point $Q=(-\sqrt[3]{A}t^2,\sqrt{B})$ that corresponds to a point on the orbit $O_2$. 
\begin{proposition}\label{prop:Point_Q_properties}
	Let $A, B$ be non-zero elements from $\mathbb{Q}$. Then the height pairing matrix of the points $Q, Q^{\sigma_{K'}}$ is $M$ from Proposition \ref{prop:Point_P_properties}. The following equality 
	\[Q+Q^{\sigma_{K'}}+Q^{\sigma_{K'}^{2}}=0\]
	holds and $\mathcal{L}(O_2)$ is of rank $2$ with basis $Q,Q^{\sigma_{K'}}$.
\end{proposition}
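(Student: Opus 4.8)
The plan is to follow verbatim the proof of Proposition~\ref{prop:Point_P_properties}, transporting each step to the orbit $O_2$ and its distinguished point $Q=(-\sqrt[3]{A}t^2,\sqrt{B})$. First I would check by direct substitution that $Q$ satisfies $E_{A,B}$, which is immediate since $(\sqrt{B})^2=(-\sqrt[3]{A}t^2)^3+At^6+B$ reduces to $B=B$. Then I would establish the relation $Q+Q^{\sigma_{K'}}+Q^{\sigma_{K'}^2}=0$ geometrically: the three conjugates $Q,Q^{\sigma_{K'}},Q^{\sigma_{K'}^2}$ are precisely the points of $\mathcal{P}(O_2)$ lying on the horizontal line $y=\sqrt{B}$, equivalently the three solutions of $x^3=-At^6$ over $\overline{\mathbb{Q}}(t)$; being collinear in the plane model they sum to zero with respect to the Weierstrass origin at infinity.

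Next I would compute heights using the recipe of Section~\ref{sec:preliminaries_on_MW_groups}. Since $x(Q)=-\sqrt[3]{A}t^2$ is a polynomial, the denominator contributes $\deg q=0$; at the place $t=\infty$ the integral chart $Y^2=X^3+A+Bs^6$ gives $X(Q)=s^2x(Q)=-\sqrt[3]{A}$ and $Y(Q)=s^3\sqrt{B}$, so $\overline{Q}$ specialises there to the finite point $(-\sqrt[3]{A},0)$ and does not meet $\overline{O}$, forcing $\delta=0$ and hence $\overline{Q}.\overline{O}=0$. Therefore $\langle Q,Q\rangle=2+2\,\overline{Q}.\overline{O}=2$, and the identical computation applied to the conjugates gives $\langle Q^{\sigma_{K'}},Q^{\sigma_{K'}}\rangle=\langle Q^{\sigma_{K'}^2},Q^{\sigma_{K'}^2}\rangle=2$. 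For the off-diagonal entry I would avoid an explicit group-law computation and instead use the collinearity relation: $Q+Q^{\sigma_{K'}}=-Q^{\sigma_{K'}^2}$ has height $2$, so expanding $\langle Q+Q^{\sigma_{K'}},Q+Q^{\sigma_{K'}}\rangle=\langle Q,Q\rangle+2\langle Q,Q^{\sigma_{K'}}\rangle+\langle Q^{\sigma_{K'}},Q^{\sigma_{K'}}\rangle$ and invoking bilinearity yields $4+2\langle Q,Q^{\sigma_{K'}}\rangle=2$, whence $\langle Q,Q^{\sigma_{K'}}\rangle=-1$ and the Gram matrix of $Q,Q^{\sigma_{K'}}$ is $M$.

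Finally, $\det M=3\neq0$ shows $Q,Q^{\sigma_{K'}}$ are $\mathbb{Z}$-linearly independent. For the last assertion I would read off $\mathcal{P}(O_2)$ from the defining equations $a^3=-A$, $a_4^2=B$ with all remaining coordinates zero: it consists of the six points $(at^2,\pm\sqrt{B})$ with $a$ ranging over the cube roots of $-A$, that is, exactly $\{\pm Q,\pm Q^{\sigma_{K'}},\pm Q^{\sigma_{K'}^2}\}$; rewriting $Q^{\sigma_{K'}^2}=-(Q+Q^{\sigma_{K'}})$ this set equals $\{\pm Q,\pm Q^{\sigma_{K'}},\pm(Q+Q^{\sigma_{K'}})\}\subset\mathbb{Z}Q+\mathbb{Z}Q^{\sigma_{K'}}$, so $\mathcal{L}(O_2)$ has rank $2$ with basis $Q,Q^{\sigma_{K'}}$.

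I do not anticipate a genuine obstacle: the configuration of $O_2$ is the mirror image of $O_1$ under interchanging the roles of the $x$- and $y$-coordinates (a degree-$2$ polynomial $x$-coordinate with constant $y$-coordinate, in place of a constant $x$-coordinate with degree-$3$ $y$-coordinate), and all relations can be confirmed on a computer exactly as noted in the Magma remark earlier in this section. The only step deserving a little care is the local analysis at $t=\infty$, where one must use the chart $Y^2=X^3+A+Bs^6$ to see that $\overline{Q}$ stays disjoint from $\overline{O}$, so that $\delta=0$ and the height computation goes through unchanged.
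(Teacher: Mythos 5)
Your proof is correct and follows essentially the same route as the paper, which simply declares the argument "almost identical" to that of Proposition \ref{prop:Point_P_properties}: verify the point lies on the curve, get the relation $Q+Q^{\sigma_{K'}}+Q^{\sigma_{K'}^{2}}=0$ from the three conjugates, compute the heights via the $\deg q+\delta$ formula (with the chart at $t=\infty$), deduce $\langle Q,Q^{\sigma_{K'}}\rangle=-1$ by bilinearity, and identify $\mathcal{P}(O_2)$ with $\{\pm Q,\pm Q^{\sigma_{K'}},\pm(Q+Q^{\sigma_{K'}})\}$. Your write-up just makes explicit the details the paper leaves implicit.
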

\begin{proof}
	We omit the proof since it is analogous to that of Proposition \ref{prop:Point_P_properties}.
\end{proof}

\subsubsection{Equations of $O_3$}
The orbit $O_3$ is defined by the following equation
\begin{align*}
	a_ 1^2 & = A,\ a_4^2 = B,\ 2 a_ 1 a_ 4 = b^3,\\
	a &= c = a_2 = a_3 = 0.
\end{align*}
Note that $b^6- 4 A B$ belongs to the ideal of $O_3$. A configuration of the points in the orbit $O_3$ is depicted in the Figure \ref{fig:orbit_3} with extra data in Table \ref{tab:table_orbit_3}.

Let $L$ denote the field $\mathbb{Q}(\sqrt{A},\sqrt{B},\zeta_3,\sqrt[3]{4AB})$. Let $\sigma_{L}$ be a map which maps $\sqrt[3]{4AB}\mapsto \zeta_3 \sqrt[3]{4AB}$ and fixes the other generators of $L$. If it extends to an automorphism of $L$, then it fixes the subfield $\mathbb{Q}(\sqrt{A},\sqrt{B},\zeta_3)$. Let $\tau_{L}$ denote a map such that $\tau_{L}(\sqrt{B})=-\sqrt{B}$. If it extends to an automorphism of $L$, then it fixes the field $\mathbb{Q}(\sqrt{A},\zeta_3,s)$.
\begin{remark}
	In the case when $A$ and $B$ are such that the corresponding automorphism does not exist we denote by $T^\tau$ the point obtained from $T$ by substitution of $-\sqrt{\alpha}$ in place of $\sqrt{\alpha}$.
\end{remark}
We consider the points 
\begin{align*}
	R&=(\frac{2\sqrt{A}\sqrt{B}}{\sqrt[3]{4AB}}t,\sqrt{A}t^3+\sqrt{B}),\\
	S&=(-\frac{2\sqrt{A}\sqrt{B}}{\sqrt[3]{4AB}}t,\sqrt{A}t^3-\sqrt{B})
\end{align*}
contained in $E_{A,B}(\overline{\mathbb{Q}}(t))$.

\begin{proposition}\label{prop:Point_RS_properties}
	Let $A, B$ be non-zero elements from $\mathbb{Q}$.
	Points $R$ and $S$ have height $2$ and they satisfy the following identities
	\[R+R^{\sigma_{L}}+R^{\sigma_{L}^2}=0\]
	and 
	\[S+S^{\sigma_{L}}+S^{\sigma_{L}^2}=0.\]
	The Gram matrix of the pairs $R,R^{\sigma_{L}}$ and $S,S^{\sigma_{L}}$ is $M$ from Proposition \ref{prop:Point_P_properties}.
	The linear span $\mathcal{L}(O_3)$ has rank $4$ and basis $R,R^{\sigma_{L}},S,S^{\sigma_{L}}$.
\end{proposition}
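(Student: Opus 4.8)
The plan is to mirror the proof of Proposition~\ref{prop:Point_P_properties}, adding one extra layer to account for $\mathcal{P}(O_3)$ being twice the size of $\mathcal{P}(O_1)$. First I would check by direct substitution that $R$ and $S$ lie on $E_{A,B}(\overline{\mathbb{Q}}(t))$. To obtain $\langle R,R\rangle=\langle S,S\rangle=2$ I would apply the height recipe of Section~\ref{sec:preliminaries_on_MW_groups}: the $x$-coordinate $x(R)=\tfrac{2\sqrt{A}\sqrt{B}}{s}\,t$ is a polynomial, so its denominator $q$ is constant, and in the chart at infinity ($t=1/w$, $(X,Y)=(w^2x,w^3y)$) the section $R$ specialises at $w=0$ to the affine point $(0,\sqrt{A})$, which is not the zero section; hence $\delta=0$, $\overline{R}.\overline{O}=0$, and the height is $2+2\cdot0=2$. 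The identical computation applies to $S$.

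Next I would prove the two three-term relations. Because $\sigma_{L}$ fixes both $\sqrt{A}$ and $\sqrt{B}$, the points $R$, $R^{\sigma_{L}}$, $R^{\sigma_{L}^2}$ share the same $y$-coordinate $y_0=\sqrt{A}t^3+\sqrt{B}$, so they are the three points cut out on $E_{A,B}$ by the horizontal line $y=y_0$, whose $x$-coordinates are the roots of $x^3=y_0^2-(At^6+B)=2\sqrt{A}\sqrt{B}\,t^3$; since $s^3=4AB$ these roots are exactly the $x$-coordinates of $R$, $R^{\sigma_{L}}$, $R^{\sigma_{L}^2}$. As three collinear points on a plane cubic sum to $0$ in the group law, we get $R+R^{\sigma_{L}}+R^{\sigma_{L}^2}=0$, and the same argument with $y_0=\sqrt{A}t^3-\sqrt{B}$ yields the relation for $S$. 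The Gram matrix $M$ then drops out exactly as before: $\langle R,R\rangle=\langle R^{\sigma_{L}},R^{\sigma_{L}}\rangle=2$ since conjugate points have equal height, $R+R^{\sigma_{L}}=-R^{\sigma_{L}^2}$ is again of height $2$, and bilinearity forces $\langle R,R^{\sigma_{L}}\rangle=-1$; likewise for $S$.

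For the rank statement I would first describe $\mathcal{P}(O_3)$ explicitly: the defining ideal gives $x=bt$, $y=a_1t^3+a_4$ with $a_1=\pm\sqrt{A}$, $a_4=\pm\sqrt{B}$, $b^3=2a_1a_4$, hence exactly $2\cdot2\cdot3=12$ points, matching the degree of the scheme. Since $\tau_{L}$ fixes $\sqrt{A}$ and $s$ and negates $\sqrt{B}$, one has $\tau_{L}(R)=S$; therefore the twelve points are $\pm R,\pm R^{\sigma_{L}},\pm R^{\sigma_{L}^2},\pm S,\pm S^{\sigma_{L}},\pm S^{\sigma_{L}^2}$, and using $R^{\sigma_{L}^2}=-R-R^{\sigma_{L}}$ and $S^{\sigma_{L}^2}=-S-S^{\sigma_{L}}$ each of them already lies in the $\mathbb{Z}$-span of $R,R^{\sigma_{L}},S,S^{\sigma_{L}}$, so $\mathcal{L}(O_3)$ equals that span. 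It remains to see the span has rank $4$, for which I would assemble the full $4\times4$ Gram matrix: its two diagonal $2\times2$ blocks are $M$, and the four cross entries $\langle R,S\rangle$, $\langle R,S^{\sigma_{L}}\rangle$, $\langle R^{\sigma_{L}},S\rangle$, $\langle R^{\sigma_{L}},S^{\sigma_{L}}\rangle$ are read off from $\langle P,Q\rangle=1+\overline{P}.\overline{O}+\overline{Q}.\overline{O}-\overline{P}.\overline{Q}$ after locating the handful of fibres — namely $t=0$ and the fibre at infinity — where the relevant sections can meet, and computing the local intersection multiplicities there. Checking that the resulting matrix is nonsingular (it is positive definite, being the Gram matrix of a sublattice of the $E_8$-lattice $E_{A,B}(\overline{\mathbb{Q}}(t))$) simultaneously yields the independence of $R,R^{\sigma_{L}},S,S^{\sigma_{L}}$ and that they form a basis.

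The genuinely computational step is the evaluation of those cross intersection numbers $\overline{R}.\overline{S}$, $\overline{R}.\overline{S^{\sigma_{L}}}$, and so on, together with their local contributions at $t=0$ and in the fibre at infinity, where $\overline{R}$ and $\overline{S}$ meet tangentially rather than transversely (both sections pass through $(0,\sqrt{A})$ at infinity). This is precisely the sort of identity the authors confirm with MAGMA in the remark preceding the discussion of $O_1$, and I would invoke that verification to guarantee at the same time that for every nonzero pair $A,B$ the scheme $O_3$ stays reduced of degree $12$, its twelve points remain pairwise distinct, and the $4\times4$ Gram matrix does not degenerate.
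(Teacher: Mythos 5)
Your proposal is correct and follows essentially the same route as the paper, whose own proof is simply ``almost identical to Proposition \ref{prop:Point_P_properties}'' together with the computer verification mentioned in the preceding remark: heights via the formula of Section \ref{sec:preliminaries_on_MW_groups}, the three-term relations (your collinearity argument along the line $y=\sqrt{A}t^3\pm\sqrt{B}$ is a clean way to phrase the paper's ``direct computation''), the off-diagonal entry $-1$ from bilinearity, and the enumeration of the $12$ points of $\mathcal{P}(O_3)$ via $\tau_L(R)=S$, which reduces everything to the nonsingularity of the $4\times 4$ Gram matrix; the vanishing of the cross pairings is exactly what the paper records in Proposition \ref{prop:Finite_index_sublattice}. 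One factual slip: $\overline{R}$ and $\overline{S}$ do \emph{not} meet tangentially in the fibre at infinity. In the chart $w=1/t$, $X=xw^2$, $Y=yw^3$, both sections pass through $(X,Y)=(0,\sqrt{A})$, but in the local coordinates $(X,w)$ their difference is $X_R-X_S=\frac{4\sqrt{A}\sqrt{B}}{s}\,w$, which vanishes to order $1$; so the intersection is transversal, $\overline{R}.\overline{S}=1$, and $\langle R,S\rangle=1+0+0-1=0$ (and the same transversality gives the other cross pairings $0$ and, combined with the extra transversal meeting of $\overline{R}$ and $\overline{R^{\sigma_L}}$ at $t=0$, the entry $-1$ inside each block), in agreement with the Gram matrix $I_4\otimes M$. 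Also note that your parenthetical ``it is positive definite, being the Gram matrix of a sublattice of the $E_8$-lattice'' is circular as a justification---positive definiteness of the Gram matrix is equivalent to the independence you are trying to prove---but this is harmless since your plan is to compute the entries and check nonsingularity directly, which is what the paper's Magma verification does.
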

\begin{proof}
	We omit the proof since it is analogous to that of Proposition \ref{prop:Point_P_properties}.
\end{proof}

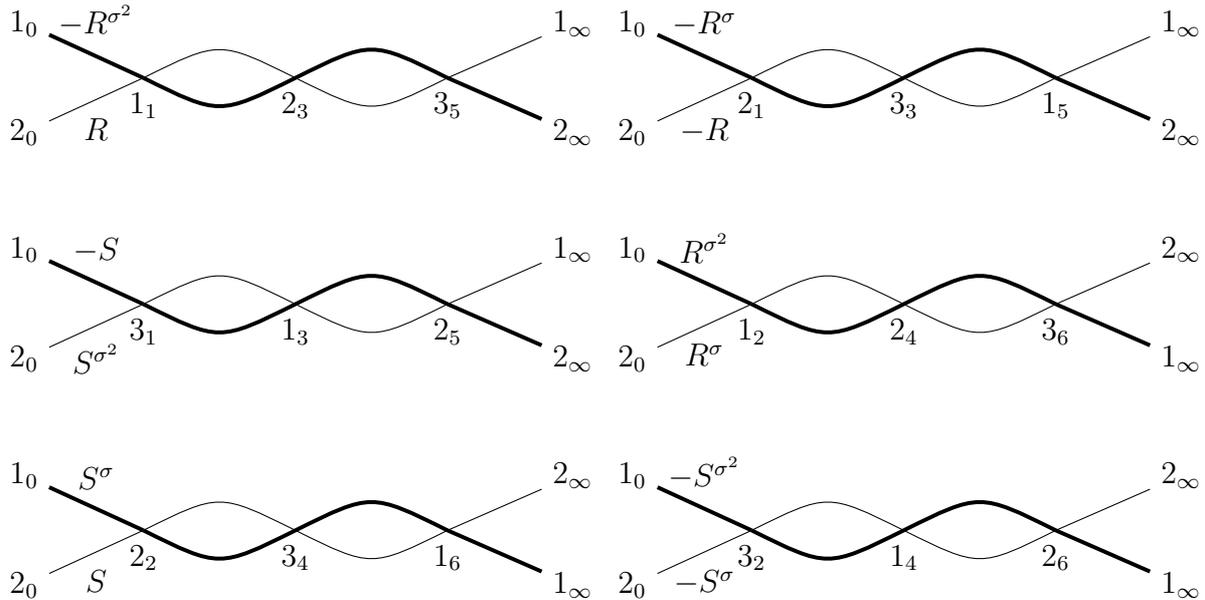
\begin{figure}[htb]
	\centering
	\begin{tikzpicture}[x=1cm,y=1cm]
		\orbitthree{1}{2}{-R^{\sigma^2}}{R}{1_1}{2_3}{3_5}{0}{0}
		
		\orbitthree{1}{2}{-R^{\sigma}}{-R}{2_1}{3_3}{1_5}{4}{0}
		
		\orbitthree{1}{2}{-S}{S^{\sigma^2}}{3_1}{1_3}{2_5}{0}{-1.5}
		
		\orbitthree{2}{1}{R^{\sigma^2}}{R^{\sigma}}{1_2}{2_4}{3_6}{4}{-1.5}
		
		\orbitthree{2}{1}{S^{\sigma}}{S}{2_2}{3_4}{1_6}{0}{-3}
		
		\orbitthree{2}{1}{-S^{\sigma^2}}{-S^{\sigma}}{3_2}{1_4}{2_6}{4}{-3}
	\end{tikzpicture}
	\caption{\label{fig:orbit_3} Configuration of the lines in $\mathcal{P}(O_3)$ on the surface.}
\end{figure}

\begin{table}[htb]
	\centering
	\begin{tabular}{c|c}
		Point label & description\\
		\hline
		$1_0 $ & $(0,\sqrt{B})$ at $t=0$\\
		$2_0 $ & $(0,-\sqrt{B})$ at $t=0$\\
		$1_\infty $ & $(0,\sqrt{A})$ at $t=\infty$\\
		$2_\infty $ & $(0,-\sqrt{A})$ at $t=\infty$\\
		$m_i$ where $0<i<\infty$ & $(\zeta_6^{2m-1}(2B)^{1/3},0)$ at $t=\zeta_6^{i-1}(\frac{A}{B})^{1/6}$
	\end{tabular}
	\caption{Description of the labels in Figure \ref{fig:orbit_3}}\label{tab:table_orbit_3}
\end{table}

\begin{proposition}\label{prop:Finite_index_sublattice}
	For $A$ and $B$ non-zero rational numbers a subgroup in $E(\overline{\mathbb{Q}}(t))$ spanned by the points $\{P,P^{\sigma_{K}}, Q, Q^{\sigma_{K'}}, R, R^{\sigma_{L},} S, S^{\sigma_{L}}\}$ has rank $8$ and the Gram matrix equal to the Kronecker product $I_{4}\otimes M$ where $I_4$ is the $4$ by $4$ identity matrix.
\end{proposition}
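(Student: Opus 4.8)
The plan is to reduce the proposition to one explicit computation: that the $8\times 8$ Gram matrix of the height pairing on the ordered family $P, P^{\sigma_K}, Q, Q^{\sigma_{K'}}, R, R^{\sigma_L}, S, S^{\sigma_L}$ equals $I_4\otimes M$, with $M$ as in Proposition \ref{prop:Point_P_properties}. Once this is known the rank assertion is immediate, since $\det(I_4\otimes M)=(\det M)^4=3^4=81\neq 0$, so the eight generators are $\mathbb{Z}$-linearly independent; as $E(\overline{\mathbb{Q}}(t))$ itself has rank $8$, the group $G$ is then a finite-index subgroup. By Proposition \ref{prop:Point_P_properties} and the analogous statements for the orbits $O_2$ and $O_3$, the four diagonal $2\times 2$ blocks of this Gram matrix — those indexed by the pairs $\{P,P^{\sigma_K}\}$, $\{Q,Q^{\sigma_{K'}}\}$, $\{R,R^{\sigma_L}\}$, $\{S,S^{\sigma_L}\}$ — are each equal to $M$. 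Hence all that remains is to show that $\langle X,Y\rangle=0$ whenever $X$ and $Y$ are generators taken from two different pairs; there are six such pairs of pairs, so $24$ pairings to verify, among them the four between $\{R,R^{\sigma_L}\}$ and $\{S,S^{\sigma_L}\}$ which the $O_3$-proposition leaves undetermined.

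To verify a given one of these $24$ pairings I would first use that all eight points have height $2$, so $\overline{X}.\overline{O}=\overline{Y}.\overline{O}=0$ by the simplified height formula recalled in Section \ref{sec:preliminaries_on_MW_groups}; hence $\langle X,Y\rangle=1-\overline{X}.\overline{Y}$, and it suffices to show the two corresponding sections meet in exactly one fibre, transversally. Equivalently, by bilinearity $\langle X+Y,X+Y\rangle = 4+2\langle X,Y\rangle$, so one only has to check that the sum $X+Y$ has height $4$. Concretely: form $X+Y\in E_{A,B}(\overline{\mathbb{Q}}(t))$ from the Weierstrass group law, put its $x$-coordinate in lowest terms as $p(t)/q(t)^2$, and compute $\overline{X+Y}.\overline{O}=\deg q+\delta$ with the correction $\delta$ read off from the integral model $y^2=x^3+A+Bs^6$ at $s=1/t=0$, as in Section \ref{sec:preliminaries_on_MW_groups}. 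Since the coordinates of $P,Q,R,S$ and their conjugates all lie in $\overline{\mathbb{Q}}(A^{1/6},B^{1/6})(t)$, this is a single symbolic computation over the function field $\overline{\mathbb{Q}}(A^{1/6},B^{1/6})$, valid uniformly in $A$ and $B$ — precisely the Magma verification indicated in the remark preceding this subsection; the same computation checks that for every nonzero $A,B$ the eight points stay pairwise distinct up to sign and of height $2$ (the fibres over the roots of $At^6+B$ remain of type $II$, those over $t=0,\infty$ remain smooth), so no specialization degenerates the answer. As an independent check, the same $24$ intersection numbers can be read directly from the explicit coordinates together with the fibre configurations recorded in Figures \ref{fig:orbit_1} and \ref{fig:orbit_3} and Table \ref{tab:table_orbit_3}.

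The main obstacle is exactly this last computation. The off-block pairings are the only data not already supplied by the orbit-wise propositions, and computing them forces one to add points whose coordinates involve $\sqrt{A}$, $\sqrt{B}$, $\sqrt[3]{A}$, $\sqrt[3]{B}$, $\zeta_3$ and $\sqrt[3]{4AB}$ all at once, so one genuinely works over a degree-$36$ extension of the base function field, and one must handle the correction term $\delta$ in the model at $t=\infty$ with care, since several of these sections pass through non-identity points of the special fibre there. Once the Gram matrix is confirmed to be $I_4\otimes M$ for all nonzero $A,B$, the group $G$ is a positive definite lattice of discriminant $81$ sitting inside the unimodular $E_8$-lattice $E(\overline{\mathbb{Q}}(t))$, and the proposition follows.
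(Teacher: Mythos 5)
Your proposal is correct and takes essentially the same route as the paper: the diagonal $2\times 2$ blocks come from the orbit propositions, and each cross-block pairing $\langle B_i,B_j\rangle$ is shown to vanish via bilinearity by checking that $B_i+B_j$ has height $4$, i.e.\ $\overline{B_i+B_j}.\overline{O}=1$, verified by a symbolic (Magma) computation of the $x$-coordinate that is uniform in nonzero $A,B$. The rank claim then follows from $\det(I_4\otimes M)=81\neq 0$ inside the rank-$8$ group $E(\overline{\mathbb{Q}}(t))$, exactly as in the paper's argument.
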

\begin{proof}
	Let $B_i$ denote the $i$-th element of the ordered tuple \[(P,P^{\sigma_{K}}, Q, Q^{\sigma_{K'}}, R, R^{\sigma_{L},} S, S^{\sigma_{L}}).\] We need to show that $\langle B_{i},B_{j}\rangle = 0$ for $i<j$ and $i+1\neq j$, $i,j \in \{1,\ldots,8\}$. From the bilinearity of the height pairing this is equivalent to showing $\langle B_{i}+B_{j},B_{i}+B_{j}\rangle =4$ or equivalently $\overline{B_{i}+B_{j}}.\overline{O}=1$. 
	To show that the curve $\overline{B_{i}+B_{j}}$ intersects the image of the zero section $\overline{O}$ exactly once we look at the $x$-coordinate of $\overline{B_{i}+B_{j}}$. For $t=\infty$ there is no intersection with
	$\overline{O}$ and for each pair of $(i,j)$ defined above there is exactly one intersection if $A,B$ are both non-zero. This last condition is verified in Magma code attached to this paper.
\end{proof}

\subsection{Further orbits}
Although it is not strictly necessary for our arguments to compute the precise structure of the orbits $O_i$ for $i\geq 4$ we include it for completeness. 
Some of those orbits could be used to provide an alternative proof of the theorem of Section \ref{sec:proof_main_theorem}. 
For each orbit $O_i$ we compute the rank and a basis of the linear span $\mathcal{L}(O_i)$. We also use some of the points from orbits $O_4, O_5,O_6$ and $O_7$ to conveniently describe bases of points in $E_{A,B}(\mathbb{Q}(t))$, cf. Section \ref{sec:rational_basis}.

\subsubsection{Orbits of size 18}
The orbit $O_4$ has size $18$ and is defined by the following equations
\begin{align*}
	a^3 &=-4A,\  c^3=-B,\ a_3^2=3ac^2,\\
	a_1 &=-\frac{1}{2B}a_3 a c^2,\ a_2=a_4=b=0.
\end{align*}
Let $U_{j,k,m}$ denote a point
\[\left(-\zeta _3^j \left(\sqrt[3]{4A} t^2+\sqrt[3]{B} \zeta _3^k\right),\frac{(-1)^{m+1}\sqrt[6]{A} \sqrt{-3} \left( \sqrt[3]{4A} t^2+2 \sqrt[3]{B} \zeta _3^k\right)t}{2^{2/3}}\right)\]
where $j,k$ belong to the set $\{0,1,2\}$ and $m$ equals $1$ or $2$.
\begin{proposition}\label{prop:Orbit_4_structure}
	Let $A, B$ be non-zero elements from $\mathbb{Q}$. It follows that 
	\[\mathcal{P}(O_4)=\{U_{j,k,m}: j,k \in \{0,1,2\}, m\in\{1,2\} \}.\]
	The linear span $\mathcal{L}(O_4)$ has rank $4$ and is generated by the points $U_{0,0,1},$ $U_{0,1,1},$ $U_{1,0,1},$ $U_{1,1,1}$. The lattice $(\mathcal{L}(O_4),\langle\cdot,\cdot\rangle)$ is isometric to the $D_4$ lattice.
\end{proposition}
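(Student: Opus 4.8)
The plan is to follow the template of the proof of Proposition~\ref{prop:Point_P_properties}, adding the bookkeeping made necessary by the larger orbit. First I would substitute the coordinates of $U_{j,k,m}$ into the equation of $E_{A,B}$ and into the seven generators of the ideal of $O_4$ listed above, and check by a direct computation—one which is uniform in $A,B$ because every expression involved is explicit—that each $U_{j,k,m}$ is a point of $E_{A,B}(\overline{\mathbb{Q}}(t))$ supported on the scheme $O_4$. For the height, note that $x(U_{j,k,m})=-\zeta_3^j\bigl(\sqrt[3]{4A}\,t^2+\sqrt[3]{B}\,\zeta_3^k\bigr)$ is a polynomial of degree $2$ in $t$ (so the denominator $q$ in Section~\ref{sec:preliminaries_on_MW_groups} is $1$), and after $s=1/t$ the quantity $x\cdot s^2$ specialises at $s=0$ to $-\zeta_3^j\sqrt[3]{4A}\neq 0$, so $\delta=0$ and $\overline{U_{j,k,m}}.\overline{O}=0$; hence $\langle U_{j,k,m},U_{j,k,m}\rangle=2$. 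Counting gives $3\cdot 3\cdot 2=18$ points, and comparing the $\zeta_3^j$, $\zeta_3^k$ and sign data shows they are pairwise distinct; since by Proposition~\ref{proposition:orbits} the scheme $O_4$ is reduced of degree $18$, we conclude $\mathcal{P}(O_4)=\{U_{j,k,m}: j,k\in\{0,1,2\},\ m\in\{1,2\}\}$.

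Next I would record the linear relations among these $18$ points. Because $x(U_{j,k,m})$ is independent of $m$ while $y(U_{j,k,m})$ carries the factor $(-1)^{m+1}$, one has $U_{j,k,2}=-U_{j,k,1}$. Moreover, for fixed $(k,m)$ the three $x$-coordinates are $-x_0,-\zeta_3 x_0,-\zeta_3^2 x_0$ with $x_0=\sqrt[3]{4A}\,t^2+\sqrt[3]{B}\,\zeta_3^k$, and—fibrewise—these are exactly the three roots of $X^3=y_0^2-(At^6+B)$, where $y_0=y(U_{j,k,m})$; hence the three points $U_{0,k,m},U_{1,k,m},U_{2,k,m}$ are the three intersection points of the line $y=y_0$ with the cubic, so $U_{0,k,m}+U_{1,k,m}+U_{2,k,m}=\mathcal{O}$. (Equivalently, the automorphism $(x,y)\mapsto(\zeta_3 x,y)$ of $E_{A,B}$ fixes $\mathcal{O}$ and cyclically permutes the three points.) These identities reduce $\mathcal{L}(O_4)$ to the $\mathbb{Z}$-span of the six points $U_{0,0,1},U_{1,0,1},U_{0,1,1},U_{1,1,1},U_{0,2,1},U_{1,2,1}$.

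Then I would compute the Gram matrix of $U_{0,0,1},U_{0,1,1},U_{1,0,1},U_{1,1,1}$. Each has height $2$; the collinear-triple relation together with the automorphism symmetry forces $\langle U_{0,0,1},U_{1,0,1}\rangle=\langle U_{0,1,1},U_{1,1,1}\rangle=-1$; the remaining three pairings are read off from the height formula of Section~\ref{sec:preliminaries_on_MW_groups} by inspecting the $x$-coordinate of the relevant sum $B_i+B_j$, exactly as in the proof of Proposition~\ref{prop:Finite_index_sublattice}. The outcome is a Gram matrix which, after an integral change of basis, is the Cartan-type matrix of $D_4$ (the central basis vector pairing to $-1$ with the other three, which are mutually orthogonal), of determinant $4$; this shows the four points are $\mathbb{Z}$-independent, so $\operatorname{rank}\mathcal{L}(O_4)\geq 4$. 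Finally, checking that $U_{0,2,1}$ and $U_{1,2,1}$ are integral combinations of the four generators—they must be, being roots of the $D_4$ lattice these four span, and this is part of the associated Magma verification—yields $\operatorname{rank}\mathcal{L}(O_4)=4$, that the four listed points generate it, and that $(\mathcal{L}(O_4),\langle\cdot,\cdot\rangle)\cong D_4$.

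The routine part is the explicit evaluation of the three non-forced height pairings and of the two remaining dependencies. The only genuinely delicate point is structural: $\mathcal{P}(O_4)$ realises only $18$ of the $24$ roots of $D_4$, so one cannot simply assert that the orbit \emph{is} a root system; one must single out the correct four roots and verify spanning directly. As in the rest of Section~\ref{sec:structure_of_the_orbits}, all of this is to be verified once and for all over $\overline{\mathbb{Q}}(A^{1/6},B^{1/6})$, so that the lattice and its Gram matrix are seen to be independent of the choice of non-zero rationals $A,B$.
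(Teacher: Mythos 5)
Your proposal is correct and follows essentially the same route as the paper: the paper likewise lists the relations among the $18$ points ($U_{j,k,1}=-U_{j,k,2}$ together with collinearity relations) and then checks by direct computation that $U_{0,0,1},U_{0,1,1},U_{1,0,1},U_{1,1,1}$ are independent with Gram matrix conjugate to that of $D_4$. The one thing you miss is the second family of collinearity relations $U_{j,0,1}+U_{j,1,1}+U_{j,2,1}=0$ (for fixed $j$ the three points are affine-linear in $\zeta_3^k$, hence lie on a line), which the paper uses to dispose of $U_{0,2,1}$ and $U_{1,2,1}$ at once; note that your parenthetical ``they must be, being roots of the $D_4$ lattice these four span'' is not by itself an argument (a height-$2$ point of the ambient lattice need not lie in a chosen rank-$4$ sublattice), so the explicit verification of those two dependencies that you defer to the computation is indeed required unless you add this relation.
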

\begin{proof}
	The points in the set $\mathcal{P}(O_4)$ satisfy the following relations
	\[U_{j,k,1}=-U_{j,k,2}\quad\textrm{for any $j,k$,}\]
	\[U_{j,0,1}+U_{j,1,1}+U_{j,2,1}=0\quad\textrm{for any $j$,}\]
	\[U_{0,k,1}+U_{0,k,1}+U_{2,k,1}=0\quad\textrm{for any $k$.}\]
	Since $\mathcal{P}(O_4)$ contains $18$ points, the relations above imply that there are at most $4$ points independent over $\mathbb{Z}$ in $\mathcal{L}(O_4)$.  
	We check by a direct computation that the points indicated in the proposition are linearly independent. The Gram matrix of their height pairing is conjugate to the Gram matrix of the lattice $D_{4}$.
\end{proof}

The orbit $O_5$ has size $18$ and is defined by the equations
\begin{align*}
	a^3&=-A,\  c^3=-4B,\ a_4^2=-3B,\\
	a_2&=-\frac{1}{2B}a_4 a c^2,\ a_1=a_3=b=0.
\end{align*}

Let $V_{j,k,m}$ denote a point

\[V_{j,k,m}=\left(-\zeta _3^j \left(\sqrt[3]{A}\zeta _3^k t^2 + \sqrt[3]{4B}\right),(-1)^{m+1}\sqrt[6]{B} \sqrt{-3} \left(\sqrt[3]{2 A}\zeta _3^k t^2 +\sqrt[3]{B}\right)\right).\]

\begin{proposition}
	Let $A, B$ be non-zero elements from $\mathbb{Q}$. It follows that 
	\[\mathcal{P}(O_5)=\{V_{j,k,m}: j,k \in \{0,1,2\}, m\in\{1,2\} \}.\]
	The linear span $\mathcal{L}(O_4)$ has rank $4$ and is generated by the points $V_{0,1,1},$  $V_{0,1,1},$ $V_{1,0,1},$ $V_{1,1,1}$. The lattice $(\mathcal{L}(O_5),\langle\cdot,\cdot\rangle)$ is isometric to the $D_4$ lattice.
\end{proposition}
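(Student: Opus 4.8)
The plan is to follow the proof of Proposition \ref{prop:Orbit_4_structure} almost verbatim. In fact $O_5$ is obtained from $O_4$ by the automorphism of the situation that interchanges $A$ and $B$ and applies the change of variable $t\mapsto 1/t$ (which on the Weierstrass model $E_{A,B}$ amounts to $A\leftrightarrow B$ together with the swap of the leading and constant coefficients $a\leftrightarrow c$; under this the equations $a^3=-4A,\ c^3=-B$ of $O_4$ become $c^3=-4B,\ a^3=-A$, i.e. the equations of $O_5$). So one could either invoke Proposition \ref{prop:Orbit_4_structure} through this symmetry, or redo the computation directly; I describe the direct route.

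First I would verify that every $V_{j,k,m}$ lies on $E_{A,B}$ and satisfies the defining equations of $O_5$: substituting the displayed coordinates into $y^2-(x^3+At^6+B)$ and expanding gives identically $0$, and one reads off $a^3=-A$, $c^3=-4B$, $a_4^2=-3B$, $a_2=-\tfrac{1}{2B}a_4ac^2$, $a_1=a_3=b=0$. Conversely, by Proposition \ref{proposition:orbits} the scheme $O_5$ has degree $18$, so $\mathcal{P}(O_5)$ has at most $18$ elements; since the $18$ points $V_{j,k,m}$ are pairwise distinct for every nonzero choice of $A,B$ (the $x$-coordinates separate the pairs $(j,k)$ and the sign of the $y$-coordinate separates $m$), the inclusion is an equality $\mathcal{P}(O_5)=\{V_{j,k,m}:j,k\in\{0,1,2\},\ m\in\{1,2\}\}$.

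Next I would record the additive relations in $E_{A,B}(\overline{\mathbb{Q}}(t))$, namely
\[V_{j,k,1}=-V_{j,k,2},\quad V_{j,0,1}+V_{j,1,1}+V_{j,2,1}=0,\quad V_{0,k,1}+V_{1,k,1}+V_{2,k,1}=0,\]
for all relevant $j$ and $k$. The first is immediate, since $V_{j,k,1}$ and $V_{j,k,2}$ share the same $x$-coordinate and have opposite $y$-coordinates; the two triple relations I would check by an explicit group-law computation, carried out once over $\overline{\mathbb{Q}}(A^{1/6},B^{1/6})$. Together these force the $18$ points to span a subgroup of rank at most $4$, with $V_{0,0,1},V_{0,1,1},V_{1,0,1},V_{1,1,1}$ as a candidate basis (this is the set of four points intended in the statement).

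Finally I would compute the Gram matrix of the height pairing on the tuple $(V_{0,0,1},V_{0,1,1},V_{1,0,1},V_{1,1,1})$ using the intersection formula of Section \ref{sec:preliminaries_on_MW_groups}: each $V_{j,k,m}$ has height $2$, and each entry $\langle V_a,V_b\rangle$ with $a\neq b$ is obtained from $\langle V_a+V_b,V_a+V_b\rangle=2+2\,\overline{V_a+V_b}.\overline{O}$ by computing the $x$-coordinate of the sum and the degree of its denominator, exactly as in Proposition \ref{prop:Finite_index_sublattice}. Since the resulting matrix has nonzero determinant, the four points are $\mathbb{Z}$-linearly independent, so $\mathcal{L}(O_5)$ has rank exactly $4$; recognizing this matrix, after an integral change of basis, as the Gram matrix of the $D_4$ root lattice — a norm-$2$ central vector paired by $-1$ with three mutually orthogonal norm-$2$ vectors — gives the isometry $(\mathcal{L}(O_5),\langle\cdot,\cdot\rangle)\cong D_4$. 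The only genuine obstacle is uniformity in $A$ and $B$: a priori some relation or pairing could degenerate at special values. As in the earlier propositions I would handle this by performing the additions and the height computations once over $\overline{\mathbb{Q}}(A^{1/6},B^{1/6})$ and checking that every denominator that occurs is a nonzero constant times a power of $A$ or $B$, hence nonvanishing whenever $AB\neq 0$; this is precisely what the accompanying Magma scripts certify.
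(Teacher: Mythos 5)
Your proposal is correct and follows essentially the same route as the paper: the paper proves this proposition simply by declaring it "completely analogous" to the $O_4$ case, whose proof is exactly your argument — the relations $V_{j,k,1}=-V_{j,k,2}$, $\sum_k V_{j,k,1}=0$, $\sum_j V_{j,k,1}=0$ cut the rank down to at most $4$, and a direct height-pairing (Gram matrix) computation, verified uniformly in $A,B$ over $\overline{\mathbb{Q}}(A^{1/6},B^{1/6})$ via Magma, gives independence and the $D_4$ isometry. Your additional observation that $O_5$ is carried to $O_4$ by the symmetry $A\leftrightarrow B$, $t\mapsto 1/t$ is a nice shortcut the paper leaves implicit, and you correctly read the intended basis as $V_{0,0,1},V_{0,1,1},V_{1,0,1},V_{1,1,1}$ despite the statement's typo.
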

\begin{proof}
	We omit the proof since it is analogous to that of Proposition \ref{prop:Orbit_4_structure}.
\end{proof}

\subsubsection{Orbits of size 36}
The orbit $O_6$ is of size $36$ and is defined by

\begin{align*}
	a_4^2&=- 3B,\  b^6=-108AB,\ c^3=-4B,\\
	a= -\frac{1}{6B}b^2c^2, a_1&=-\frac{1}{6B}a_4b^3,\ a_2=-\frac{1}{3B}a_4b^2c,\ a_3=-\frac{1}{2B}a_4bc^2.
\end{align*}
Let $W_{j,k,m,n}$ denote a point
\[\begin{split}
	\left(
	-2 \sqrt[3]{A} \left(\zeta _3+1\right) \zeta _3^{2 j+2 k}t^2+\left(\zeta _3+2\right) (-1)^m \sqrt[6]{4 A B} \zeta _3^k t+\sqrt[3]{-4 B} \zeta _3^j,\right.\\
	\left.3 \sqrt{A} (-1)^m t^3+2 \sqrt[3]{2 A} \sqrt[6]{B} \left(\zeta _3-1\right)  \zeta _3^{j+2 k}t^2\right.\\ \left.-3 \sqrt[6]{A} \sqrt[3]{4 B} (-1)^m t \zeta _3^{2 j+k+1}+\sqrt{-3B} \right).
\end{split}\]

\begin{proposition}\label{prop:Orbit_6_structure}
	Let $A, B$ be non-zero elements from $\mathbb{Q}$. It follows that 
	\[\mathcal{P}(O_6)=\{W_{j,k,m,n}: j,k \in \{0,1,2\}, m,n\in\{0,1\} \}.\]
	The linear span $\mathcal{L}(O_6)$ has rank $6$ and is generated by the points $W_{0,0,0,0},$ $W_{0,1,0,0},$ $W_{0,0,1,0}$, $W_{0,1,1,0},$ $W_{1,0,0,0},$ $W_{1,1,0,0}$. The lattice $(\mathcal{L}(O_6),\langle\cdot,\cdot\rangle)$ is isometric to the $E_6$ lattice.
\end{proposition}
\begin{proof}
	We have the following relations among the points in the set  $\mathcal{P}(O_6)$:
	\begin{align*}
		W_{j,k,m,0}+W_{j,k,m,1}&=0\quad\textrm{for any $j,k,m$,}\\
		\sum_{j=0}^{2}W_{j,j+s,m,n}&=0\quad\textrm{for any $m,n,s$,}\\
		\sum_{k=0}^{2}W_{j,k,m,n}&=0\quad\textrm{for any $j,m,n$,}\\
		W_{1,1,1,0}&=W_{0,1,1,0}+W_{1,1,0,0}-W_{0,1,0,0},\\
		W_{1,0,1,0}&=W_{0,0,1,0}+W_{1,0,0,0}-W_{0,0,0,0}.
	\end{align*}
	From those relations it follows that there are at most $6$ linearly independent points in $\mathcal{P}(O_6)$ and we check by a direct computation that they span an $E_6$ type lattice.
\end{proof}

The orbit $O_7$ has size $18$ and is defined by
\begin{align*}
	b^6&= -108AB,\ c^3= 8B,\ a_4^2= 9B\\
	a= \frac{1}{12B}b^2c^2,\ a_1&= \frac{1}{18B}a_4b^3,\ a_2= \frac{1}{6B}a_4b^2c,\ a_3= \frac{1}{6B}a_4bc^2
\end{align*}
Let $X_{j,k,m,n}$ denote a point where $j,k\in\{0,1,2\}$ and $m,n\in\{0,1\}$. The $x$-coordinate of $X_{j,k,m,n}$ is
\[\left(
\zeta _3^k \left(t^2 \left(\sqrt[3]{4 A} \left(\zeta _3+1\right) \zeta _3^{2 j}\right)+ \left(\left(\zeta _3+2\right) (-1)^n \sqrt[6]{4 A B}\right) t+2 \sqrt[3]{B} \zeta _3^j\right) \right.\]
and the $y$-coordinate is
\[
\begin{split}
	(-1)^m \left( \left(\sqrt{A} \left(2 \zeta _3+1\right) (-1)^n\right)t^3\right.
	+ \left(3 \sqrt[3]{4 A} \sqrt[6]{B} \left(\zeta _3+1\right) \zeta _3^j\right) t^2 \\
	+\left(2 \sqrt[6]{A} \sqrt[3]{2 B} \left(\zeta _3+2\right) (-1)^n \zeta _3^{2 j}\right) t
	\left.+3 \sqrt{B}\right).
\end{split}
\]
\begin{proposition}\label{prop:Orbit_7_structure}
	Let $A, B$ be non-zero elements from $\mathbb{Q}$. It follows that 
	\[\mathcal{P}(O_7)=\{X_{j,k,m,n}: j,k \in \{0,1,2\}, m,n\in\{0,1\} \}.\]
	The linear span $\mathcal{L}(O_7)$ has rank $6$ and is generated by the points $$X_{0,0,0,0}, X_{0,1,0,0},X_{0,0,0,1}, X_{0,1,0,1},X_{1,0,0,0},X_{1,1,0,0}.$$ The lattice $(\mathcal{L}(O_7),\langle\cdot,\cdot\rangle)$ is isometric to the $E_6$ lattice.
\end{proposition}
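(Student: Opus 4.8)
The proof follows the template of Propositions~\ref{prop:Orbit_4_structure} and~\ref{prop:Orbit_6_structure}, in three stages: identify $\mathcal{P}(O_7)$ explicitly, reduce its $\mathbb{Z}$-span to the six stated generators via a list of linear relations, and finally compute the Gram matrix and recognize it as $E_6$.

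First I would substitute the displayed $x$- and $y$-coordinates of $X_{j,k,m,n}$ into the defining equations of $O_7$ (namely $b^6=-108AB$, $c^3=8B$, $a_4^2=9B$, $a=\tfrac{1}{12B}b^2c^2$, $a_1=\tfrac{1}{18B}a_4b^3$, $a_2=\tfrac{1}{6B}a_4b^2c$, $a_3=\tfrac{1}{6B}a_4bc^2$) and check, for all $j,k\in\{0,1,2\}$ and $m,n\in\{0,1\}$, that these hold; this is a finite verification over $\mathbb{Q}(A,B)$ after adjoining the relevant roots of $A$ and $B$, of the sort recorded in the Magma files \cite{Magma_code}. By Proposition~\ref{proposition:orbits} the scheme $O_7$ is zero-dimensional, reduced and irreducible over $\mathbb{Q}(A,B)$ of degree $36$; since the $36$ points $X_{j,k,m,n}$ are pairwise distinct for generic $A,B$, they must exhaust $\mathcal{P}(O_7)$. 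That they remain distinct, and that all the identities used below remain valid, for every nonzero specialization of $A,B$ is checked by computer exactly as in the proof of Proposition~\ref{prop:Finite_index_sublattice}.

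Next I would record the linear relations among these sections. As for $O_6$, one index --- here $m$ --- plays the role of sign, giving $X_{j,k,0,n}+X_{j,k,1,n}=0$; summing over appropriate ``diagonals'' in $j,k$ yields vanishing sums of three coplanar sections of the shape $\sum_j X_{j,j+s,m,n}=0$ and $\sum_k X_{j,k,m,n}=0$; and two further relations express the remaining points (e.g. $X_{1,1,m,1}$ and $X_{1,0,m,1}$) as integral combinations of the six chosen sections, in complete analogy with the last two identities in the proof of Proposition~\ref{prop:Orbit_6_structure}. Together these show that every $X_{j,k,m,n}$ lies in the $\mathbb{Z}$-span of $X_{0,0,0,0},X_{0,1,0,0},X_{0,0,0,1},X_{0,1,0,1},X_{1,0,0,0},X_{1,1,0,0}$; since the relations are integral, the $\mathbb{Z}$-span of these six is exactly $\mathcal{L}(O_7)$, which therefore has rank at most $6$.

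Finally one computes the height pairing on the six generators. Each $X_{j,k,m,n}$ has height $2$: its $x$-coordinate is an honest quadratic polynomial in $t$, so $\overline{X_{j,k,m,n}}.\overline{O}=0$ at every finite place, and the integral model at $t=\infty$ contributes no further intersection --- this is the computation of Section~\ref{sec:preliminaries_on_MW_groups}. The off-diagonal entries follow from $\langle X_i,X_j\rangle=\tfrac12\bigl(\langle X_i+X_j,X_i+X_j\rangle-\langle X_i,X_i\rangle-\langle X_j,X_j\rangle\bigr)$ and the simplified formula $\langle X_i+X_j,X_i+X_j\rangle=2+2\,\overline{X_i+X_j}.\overline{O}$, so $\langle X_i,X_j\rangle=\overline{X_i+X_j}.\overline{O}-1$; each intersection number $\overline{X_i+X_j}.\overline{O}\in\{0,1\}$ is read off the $x$-coordinate of $X_i+X_j$ and, as in Proposition~\ref{prop:Finite_index_sublattice}, is independent of the choice of nonzero $A,B$. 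The resulting $6\times6$ Gram matrix is even, positive definite, of determinant $3$, and generated by norm-$2$ vectors; by the classification of root lattices this identifies $(\mathcal{L}(O_7),\langle\cdot,\cdot\rangle)$ with $E_6$ --- equivalently, one exhibits a unimodular change of basis bringing it to the standard $E_6$ Gram matrix. I expect the main obstacle to be the bookkeeping in this last step: pushing the sixth roots of $A,B$ and the powers of $\zeta_3$ through the group law to obtain the $x$-coordinates of the sums $X_i+X_j$, and verifying that the resulting intersection numbers are genuinely constant in $A,B$. It is routine but lengthy, which is why it is delegated to the Magma verification.
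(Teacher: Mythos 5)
Your proposal is correct and follows essentially the paper's own route: the paper's proof of this proposition is literally ``analogous to Proposition~\ref{prop:Orbit_6_structure}'', and you have spelled out exactly that analogy (sign relation in the index $m$, three-term relations reducing to six generators, direct height-pairing computation identifying $E_6$, with the specialization-independence delegated to the Magma verification as in Proposition~\ref{prop:Finite_index_sublattice}). The only remark worth making is that you correctly read the size of $O_7$ as $36$ from the degree list in Proposition~\ref{proposition:orbits}, whereas the prose preceding the proposition misstates it as $18$.
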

\begin{proof}
	We omit the proof since it is analogous to that of Proposition \ref{prop:Orbit_6_structure}.
\end{proof}

\subsubsection{Orbit of size 108}
The orbit $O_8$ is the longest one, of size $108$, with the following defining polynomial equations
\begin{align*}
	0&=64 B^3+48 B^2 c^3+228 B c^6+c^9,\\
	b^6&=\frac{4 A \left(7 B^2+5 B c^3+25 c^6\right)}{3 B},\\
	a_4^2&=B+c^3
\end{align*}

and 

\begin{align*}
	a&=-\frac{b^2 c^2 \left(-11504 B^2+1544 B c^3+7 c^6\right)}{5184 B^3},\\
	a_1&=\frac{a_4 b^3 \left(2728 B^2+4340 B c^3+19 c^6\right)}{1296 B^3},\\
	a_2&=\frac{a_4 b^2 \left(4160 B^2 c+2524 B c^4+11 c^7\right)}{2592 B^3},\\
	a_3&=-\frac{a_4 b c^2 \left(-179 B^2+227 B c^3+c^6\right)}{162 B^3}
\end{align*}
Let $p(x)=\left(x^3+6 x^2+4\right) \left(x^6-6 x^5+36 x^4+8 x^3-24 x^2+16\right)$ be a polynomial. We observe that for $c=\alpha B^{1/3}$ the equation $64 B^3+48 B^2 c^3+228 B c^6+c^9=0$ is equivalent to $p(\alpha)=0$. Let $c_1(o)$ denote the $o$-th root of the polynomial $p(x)$ defined by the formula
\[c_{1}(o)=-2^{2/3}\left(\sqrt[3]{2} \zeta _3^{2 \left(\left\lfloor \frac{o-1}{3}\right\rfloor +o-1\right)}+\zeta _3^{\left\lfloor \frac{o-1}{3}\right\rfloor }+2^{2/3} \zeta _3^{o-1}\right)\]
for $o \in \{1,\ldots,9\}$.

Let $Y_{o,j,m,n}$ denote a point
\[Y_{o,j,m,n} =(a t^2+bt+c,a_1 t^3+a_2 t^2+a_3 t+a_4) \]
for $m,n\in\{-1,1\}$, $j\in\{0,1,2\}$ and $o=1,\ldots,9$ and
where $c=c(o)=c_1(o)B^{1/3}$, $a_4=a_4(n,o)=(-1)^{n}B^{1/2}(1+c_1(o))^{1/2}$ and $b=b(j,m,o)=(-1)^m(AB)^{1/6}(4/3 (7 + 5 c_1(o)^3 + 25 c_1(o)^6))^{1/6}\zeta_3^j$.
It is easy to check that the suitable roots of degree $2$ and $6$ in the expressions above belong to the field $\mathbb{Q}(\zeta_3,2^{1/3})$.
\begin{proposition}\label{prop:Orbit_8_structure}
	Let $A, B$ be non-zero elements from $\mathbb{Q}$. It follows that 
	\[\mathcal{P}(O_8)=\{Y_{o,j,m,n}: j \in \{0,1,2\}, m,n\in\{0,1\},o\in \{1,\ldots,9\} \}.\]
	The linear span $\mathcal{L}(O_8)$ has rank $8$ and is generated by the points
	$Y_{1, 0, 0, 0},$ $Y_{1, 0, 1, 0},$ $Y_{1, 1, 0, 0}, Y_{1, 1, 1, 0},$ $Y_{1, 2, 0, 0}, Y_{3,0,0,0},Y_{3, 0, 1, 0}, Y_{4, 0, 0, 0}$. The lattice $\mathcal{L}(O_8)$ is isometric to the $E_8$ lattice.
\end{proposition}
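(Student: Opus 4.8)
The plan is to follow the three-step template already used for $O_4,\dots,O_7$: first exhibit the parametrized family $\{Y_{o,j,m,n}\}$ as points of $O_8$; then count to see that it exhausts $\mathcal P(O_8)$; and finally identify the lattice $\mathcal L(O_8)$ through an explicit Gram-matrix computation. For membership, substitute $c=c_1(o)B^{1/3}$, $a_4=(-1)^nB^{1/2}(1+c_1(o))^{1/2}$, $b=(-1)^m(AB)^{1/6}\bigl(\tfrac43(7+5c_1(o)^3+25c_1(o)^6)\bigr)^{1/6}\zeta_3^j$, together with the displayed expressions for $a,a_1,a_2,a_3$ in terms of $b,c,a_4$, into the seven generators $c_0,\dots,c_6$ of the ideal $I$ from Proposition~\ref{proposition:orbits}, and check that all seven vanish identically; the only nontrivial input is $64B^3+48B^2c^3+228Bc^6+c^9=0$, which is exactly $p(c_1(o))=0$ after writing $c=\alpha B^{1/3}$. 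For the count, the index set has $9\cdot 3\cdot 2\cdot 2=108$ elements, so $\#\{Y_{o,j,m,n}\}\le 108$, with equality once one checks that the nine roots $c_1(o)$ are pairwise distinct and that distinct tuples give distinct pairs $(x,y)$. Since Proposition~\ref{proposition:orbits} says $O_8$ is zero-dimensional, reduced and of degree $108$, and every $Y_{o,j,m,n}$ is an $\overline{\mathbb{Q}}(t)$-point lying on it, we conclude $\mathcal P(O_8)=\{Y_{o,j,m,n}: j\in\{0,1,2\},\ m,n\in\{0,1\},\ o\in\{1,\dots,9\}\}$.

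For the lattice structure, note that, unlike the smaller orbits, no relations among the $108$ points are needed to bound the rank: since $\mathcal L(O_8)\subseteq E_{A,B}(\overline{\mathbb{Q}}(t))$ and the latter has rank $8$, automatically $\rank\mathcal L(O_8)\le 8$. It therefore suffices to show that the eight points $Y_{1,0,0,0}$, $Y_{1,0,1,0}$, $Y_{1,1,0,0}$, $Y_{1,1,1,0}$, $Y_{1,2,0,0}$, $Y_{3,0,0,0}$, $Y_{3,0,1,0}$, $Y_{4,0,0,0}$ are linearly independent and to compute their height-pairing Gram matrix. By the identity $\langle P,P\rangle=2+2\,\overline P.\overline O$ together with bilinearity, every entry reduces to an intersection number $\overline P.\overline O=\deg q(t)+\delta$ as recalled in Section~\ref{sec:preliminaries_on_MW_groups}; after reducing coordinates modulo the field relations among $\zeta_3,2^{1/3},A^{1/6},B^{1/6}$ one finds that the $x$-coordinate of each of these points (and of each pairwise sum) is a genuine quadratic polynomial, so $q=1$, and $\delta=0$ is checked in the model $y^2=x^3+A+Bs^6$ at $s=0$, exactly as in the proof of Proposition~\ref{prop:Finite_index_sublattice}. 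One then verifies that the resulting symmetric $8\times 8$ matrix is conjugate to the $E_8$ Gram matrix; in particular it has determinant $1$. Thus these eight points span a unimodular rank-$8$ sublattice of the rank-$8$ lattice $E_{A,B}(\overline{\mathbb{Q}}(t))\cong E_8$, which forces the index to be $1$: $\mathcal L(O_8)=E_{A,B}(\overline{\mathbb{Q}}(t))\cong E_8$, and the eight listed points form a basis.

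I expect the main obstacle to be computational rather than conceptual. The defining data of $O_8$ involves the degree-$9$ factor $p(x)$ and coefficients that are bulky rational functions of $B$ and $c$, so substituting the explicit roots $c_1(o)$, simplifying the seven ideal generators, reducing the eight $x$-coordinates and all $\binom{8}{2}$ pairwise sums to polynomials of degree exactly $2$, and evaluating the intersection numbers are only realistically done with a computer algebra system, which is why (as for the other orbits) the verification is carried out in Magma. The one genuine subtlety is uniformity in $A,B$: following the Remark preceding the orbit computations, one works over $\overline{\mathbb{Q}}(A^{1/6},B^{1/6})$ and checks that the Gram matrix obtained is the same for every nonzero choice of $A$ and $B$, so that the conclusion $\mathcal L(O_8)\cong E_8$ holds for all such $A,B$ simultaneously.
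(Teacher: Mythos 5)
Your overall strategy is sound and close in spirit to the paper's: both proofs rest on a computer-verified height-pairing computation and both conclude via the discriminant: a rank-$8$ sublattice of $E_{A,B}(\overline{\mathbb{Q}}(t))\cong E_8$ with discriminant $1$ must be the whole lattice. Where you genuinely diverge is in how the rank and generation are handled. The paper works through the explicit relations among the $108$ points ($Y_{o,j,m,0}+Y_{o,j,m,1}=0$, the three-term relations linking $o\in\{1,2,3\}$, $\{4,5,6\}$, $\{8,9,7\}$, and $\sum_{j,m}Y_{o,j,m,0}=0$), reduces to $30$ and then $9$ points, and extracts the last relation from the kernel of the height matrix, before checking discriminant $1$. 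You instead observe that $\rank\mathcal{L}(O_8)\le 8$ is automatic from the ambient Mordell--Weil lattice and go straight to the Gram matrix of the eight listed points; unimodularity then gives both generation and the $E_8$ identification at once. That is a legitimate shortcut for this proposition (the paper's relations are not logically needed here, though they carry extra structural information the authors record), and your treatment of $\mathcal{P}(O_8)$ via membership in the ideal plus the degree count $108$ from Proposition \ref{proposition:orbits} matches the paper's setup.

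One concrete slip in your description of the verification: it is not true that the $x$-coordinate of \emph{each pairwise sum} of the eight chosen points is a quadratic polynomial. If that held, every sum $e_i+e_j$ would have height $2$, hence $\langle e_i,e_j\rangle=-1$ for all $i\neq j$; the resulting matrix $3I_8-J_8$ is not positive definite and certainly not conjugate to the $E_8$ Gram matrix, contradicting your own conclusion. In any basis realizing $E_8$ by norm-$2$ vectors, most off-diagonal pairings are $0$, and for such pairs the sum has height $4$, so its $x$-coordinate acquires a nontrivial denominator ($\deg q+\delta=1$), exactly as in the proof of Proposition \ref{prop:Finite_index_sublattice} where $\overline{B_i+B_j}.\overline{O}=1$ is the relevant computation. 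This does not invalidate your method — the Magma computation of the intersection numbers simply returns the correct mixture of $0$'s and $-1$'s — but as written the claim would make the verification fail, so it should be corrected to: compute $\overline{e_i}.\overline{e_j}$ (equivalently the heights of the sums, which take values $2$ or $4$) and check that the resulting Gram matrix has determinant $1$ and is equivalent to $E_8$, uniformly in $A,B$ over $\overline{\mathbb{Q}}(A^{1/6},B^{1/6})$ as you note.
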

\begin{proof}
	We have the following equalities
	$$Y_{o,j,m,0}+Y_{o,j,m,1}=0,$$
	$$Y_{1,j,m,0}+Y_{3,j,m,0}+Y_{2,j,m+1,0}=0,$$
	$$Y_{4,j,m,0}+Y_{5,j,m,0}+Y_{6,j,m+1,0}=0,$$
	$$Y_{8,j,m,0}+Y_{9,j,m,0}+Y_{7,j,m+1,0}=0,$$	
	for any choice of $o,j,m$. We also have the following relation
	$$\sum_{j,m}Y_{o,j,m,0}=0$$ for any $o\in \{1,3,4,5,8,9\}$.
	Relations above allow us to reduce the spanning set for $\mathcal{L}(O_{8})$ to $30 $ points among which we find pairs which add up to the same points. 
	This generates another $21$ relations and we compute the height pairing matrix for the remaining $9$ points. 
	The height matrix has rank $8$ and its kernel provides the final relation. 
	Finally we check that the lattice of rank $8$ that we have obtained has discriminant $1$, thus it must be isomorphic to $E_{8}$ since the whole Mordell-Weil lattice $E_{A,B}(\overline{\mathbb{Q}}(t))$ is isomorphic to the latter by \cite{Oguiso_Shioda}.
\end{proof}
\begin{remark}
	We computed the relation between the roots of the Mordell-Weil lattice in terms of orbits.
	Let $R(G)$ denote the subset of elements in $G\subset E_{A,B}(\overline{\mathbb{Q}}(t))$ of height 2. The following equalities hold
	\begin{align*}
		\RLL[1]&=\PO[1]\\
		\RLL[2]&=\PO[2]\\
		\RLL[3]&=\PO[3]\\
		\RLL[4]&=\PO[1]\cup\PO[4]\\
		\RLL[5]&=\PO[2]\cup\PO[5]\\
		\RLL[6]&=\PO[2]\cup\PO[3]\cup\PO[5]\cup\PO[6]\\
		\RLL[7]&=\PO[1]\cup\PO[3]\cup\PO[4]\cup\PO[7]\\
		\RLL[8]&=\bigcup_{i=1}^{8}\PO[i]\\
	\end{align*}
\end{remark}

\section{Proof of the main theorem}\label{sec:proof_main_theorem}
\subsection{Structure of the Galois modules}\label{sec:Galois_module_structure}
Let $A, B$ be non-zero rational numbers. In this section we compute in detail the Galois action of the group $G=\Gal(\overline{\mathbb{Q}}/\mathbb{Q})$ on the module $M=E_{A,B}(\overline{\mathbb{Q}}(t))$ and the $\mathbb{Q}$-vector space $V=M\otimes\mathbb{Q}$. It follows from the discussion in Section \ref{sec:preliminaries_on_MW_groups} that $V$ is a vector space of dimension $8$ over $\mathbb{Q}$. We define four subspaces $V_i\subset V$, $i=1,2,3,4$ by their $\mathbb{Q}$-generators

\begin{align*}
	V_{1} &=\myspan_{\mathbb{Q}}\langle P,P^{\sigma_{K}}\rangle,\\
	V_{2} &=\myspan_{\mathbb{Q}}\langle Q, Q^{\sigma_{K'}}\rangle,\\
	V_{3} &=\myspan_{\mathbb{Q}}\langle R+S,R^{\sigma_{L}}+S^{\sigma_{L}}\rangle,\\
	V_{4} &=\myspan_{\mathbb{Q}}\langle R-S,R^{\sigma_{L}}-S^{\sigma_{L}}\rangle.\\
\end{align*}

\begin{proposition}
	Let $A, B$ be non-zero rational numbers. The subspaces $V_i$ are $\mathbb{Q}[G]$-sub-modules  of $V=E_{A,B}(\overline{\mathbb{Q}}(t))\otimes\mathbb{Q}$. The $\mathbb{Q}[G]$-module $V$ is a direct sum $V_{1}\oplus V_{2}\oplus V_{3}\oplus V_{4}$ as $\mathbb{Q}[G]$-modules.
	and each subspace $V_{i}$ is of dimension $2$ over $\mathbb{Q}$.
\end{proposition}

\begin{proof}
	Proposition \ref{prop:Point_P_properties} implies that the submodule $V_1$ is stable under the action of the group $G$. Proposition \ref{prop:Point_Q_properties} implies that the submodule $V_{2}$ is $G$-stable. Proposition \ref{prop:Point_RS_properties} implies that $V_3$ and $V_4$ are $G$-stable.
	
	To prove that each $V_i$ has dimension $2$ over $\mathbb{Q}$ we observe that the chosen spanning sets have the Gram matrix $M$ with respect to the height pairing  (defined in Proposition \ref{prop:Point_P_properties}). Hence, the assumption that there exists two non-zero $x,y\in\mathbb{Q}$ such that $x g_1+y g_2=0$ for $g_1,g_2$ - spanning elements of $V_i$, would imply that $M\cdot(x,y)^{T} = (0,0)^{T}$ and since $\det M=3$, $(x,y) = (0,0)$.
	
	Next, we show that $V$ is a direct sum of $V_i$ submodules.
	Equivalently, we show that the intersections $V_{i}\cap V_{j}$ are zero. The equality $V_{i}\cap V_{j}=0$ for $i=1,2$ and $j\neq i$ follows directly from the structure of the Gram matrix for the set $\{P,P^{\sigma_{K}}, Q, Q^{\sigma_{K'}}, R, R^{\sigma_{L},} S, S^{\sigma_{L}}\}$ described in Proposition \ref{prop:Finite_index_sublattice}. To verify $V_{3}\cap V_{4}=0$ we check that $\langle R+S,R-S\rangle =0$ and $\langle R+S, R^{\sigma_{L}}-S^{\sigma_{L}}\rangle = 0$ and we do a similar calculation for $R-S$. 
\end{proof}

Since the elliptic surface $\mathcal{E}_{A,B}$ has no torsion sections (Proposition \ref{prop:pt_structure_on_ell_surf}) it follows that $V^{G} = E_{A,B}(\mathbb{Q}(t))\otimes\mathbb{Q}$ and also $E_{A,B}(\mathbb{Q}(t))=E_{A,B}(\overline{\mathbb{Q}}(t))\cap V^{G}$. We denote by $V_{i}^{G}$ the $G$-invariants of a submodule $V_i$.
\begin{proposition}\label{prop:suff_condition_Vi}
	Let $A,B$ be non-zero rational numbers and $E=E_{A,B}$. The following conditions hold:
	\begin{itemize}
		\item If $V_{1}^{G}\neq \{0\}$, then $B$ is a cube.
		\item If $V_{2}^{G}\neq \{0\}$, then $A$ is a cube.
		\item If $V_{3}^{G}\neq \{0\}$, then $4AB$ is a cube.
		\item If $V_{4}^{G}\neq \{0\}$, then $4AB$ is a cube.
	\end{itemize}
\end{proposition}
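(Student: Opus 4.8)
The plan is to prove the contrapositive of each of the four implications by a Galois descent argument. Since $\overline{\mathbb{Q}}(t)/\mathbb{Q}(t)$ is Galois with group $G=\Gal(\overline{\mathbb{Q}}/\mathbb{Q})$ acting on coefficients and fixing $t$, one has $E(\mathbb{Q}(t))=M^{G}$ for $M=E(\overline{\mathbb{Q}}(t))$, so a nonzero element of $V_{i}\cap E(\mathbb{Q}(t))$ would be a nonzero $G$-invariant vector of $V_{i}$. Hence it is enough to show that, if $B$ (resp. $A$, resp. $4AB$, resp. $4AB$) is not a cube in $\mathbb{Q}$, then $V_{1}^{G}=0$ (resp. $V_{2}^{G}=0$, $V_{3}^{G}=0$, $V_{4}^{G}=0$). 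For each $i$ I would exhibit one explicit element $\sigma\in G$ that acts on $V_{i}$ with no nonzero fixed vector; since $V_{i}^{G}\subseteq V_{i}^{\langle\sigma\rangle}$, this finishes the argument.

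The elements $\sigma$ are meant to be the maps $\sigma_{K}$, $\sigma_{K'}$, $\sigma_{L}$ introduced in Section~\ref{sec:structure_of_the_orbits}, and the first thing to verify is that these are genuine field automorphisms under the stated hypotheses. The lemma I would isolate is: if $c\in\mathbb{Q}^{\times}$ is not a cube in $\mathbb{Q}$ and $F/\mathbb{Q}$ is finite with $3\nmid[F:\mathbb{Q}]$, then $x^{3}-c$ is irreducible over $F$ (otherwise $\mathbb{Q}(\sqrt[3]{c})\subseteq F$ forces $3\mid[F:\mathbb{Q}]$). The fields $\mathbb{Q}(\sqrt{A},\zeta_{3})$, $\mathbb{Q}(\sqrt{B},\zeta_{3})$ and $\mathbb{Q}(\sqrt{A},\sqrt{B},\zeta_{3})$ all have $2$-power degree over $\mathbb{Q}$ (this also absorbs the degenerate cases in which $A$ or $B$ is a square or $-3$ times a square, which only make $F$ smaller), so whenever $B$ is not a cube in $\mathbb{Q}$, $x^{3}-B$ stays irreducible over $\mathbb{Q}(\sqrt{A},\zeta_{3})$ and $\sigma_{K}$ extends to an order-$3$ automorphism of $\overline{\mathbb{Q}}$ fixing $\mathbb{Q}(\sqrt{A},\zeta_{3})$ with $\sqrt[3]{B}\mapsto\zeta_{3}\sqrt[3]{B}$; likewise $\sigma_{K'}$ exists when $A$ is not a cube, and $\sigma_{L}$ exists when $4AB$ is not a cube.

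Next I would compute the matrix of $\sigma$ acting on $V_{i}$ in the basis that defines it. For $V_{1}$, by construction $\sigma_{K}(P)=P^{\sigma_{K}}$ and $\sigma_{K}(P^{\sigma_{K}})=P^{\sigma_{K}^{2}}$, and Proposition~\ref{prop:Point_P_properties} gives $P+P^{\sigma_{K}}+P^{\sigma_{K}^{2}}=0$; hence in the basis $(P,P^{\sigma_{K}})$ the map $\sigma_{K}$ is the companion matrix $\matr{0}{-1}{1}{-1}$ of $\lambda^{2}+\lambda+1$, whose eigenvalues are the primitive cube roots of unity, so $V_{1}^{\sigma_{K}}=0$. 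The identical computation applies to $V_{2}$ (basis $Q,Q^{\sigma_{K'}}$, relation $Q+Q^{\sigma_{K'}}+Q^{\sigma_{K'}^{2}}=0$), to $V_{3}$ (basis $R+S,\,R^{\sigma_{L}}+S^{\sigma_{L}}$, where $R+R^{\sigma_{L}}+R^{\sigma_{L}^{2}}=0$ and $S+S^{\sigma_{L}}+S^{\sigma_{L}^{2}}=0$ combine, using that $\sigma_{L}$ commutes with the group law, to $(R+S)+(R+S)^{\sigma_{L}}+(R+S)^{\sigma_{L}^{2}}=0$), and to $V_{4}$ (basis $R-S,\,R^{\sigma_{L}}-S^{\sigma_{L}}$). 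In every case $V_{i}^{G}\subseteq V_{i}^{\sigma}=0$, which is exactly the contrapositive we want.

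The one step I expect to be delicate is the existence of the automorphisms $\sigma_{K},\sigma_{K'},\sigma_{L}$ — that is, promoting the purely formal substitutions in the coordinates of $P$, $Q$, $R$, $S$ to honest elements of $G$, together with the bookkeeping of the degenerate values of $A$ and $B$. Once the coprimality-to-$3$ degree argument is in place, the remainder is the elementary observation that an order-$3$ automorphism of a $2$-dimensional $\mathbb{Q}$-vector space has trivial fixed subspace.
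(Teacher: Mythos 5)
Your proposal is correct and follows essentially the same route as the paper: argue the contrapositive, note that when the relevant quantity ($B$, $A$, or $4AB$) is not a cube the substitution $\sigma_{K}$, $\sigma_{K'}$, or $\sigma_{L}$ extends to an element of $\Gal(\overline{\mathbb{Q}}/\mathbb{Q})$, and observe that this element acts on the two-dimensional space $V_{i}$ by the companion matrix of $x^{2}+x+1$, which has no nonzero fixed vector. The only difference is that you spell out the degree-coprime-to-$3$ argument for extending the substitutions to genuine automorphisms, a step the paper asserts without detail.
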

\begin{proof}
	Let $X$ denote $B$, $A$ or $4AB$. If $X$ is not a cube in $\mathbb{Q}$, then there exists an automorphism in $\sigma\in\Gal(\overline{\mathbb{Q}}/\mathbb{Q})$ which restricts to respectively $\sigma_{K}$, $\sigma_{K'}$ or $\sigma_{L}$. In the $\mathbb{Q}$-basis of $V_{i}$ the matrix of $\sigma$ is
	\[M:=\left(\begin{array}{cc}
		0 & -1 \\
		1 & -1 \\
		
	\end{array}\right)\]
	which has characteristic polynomial $1+x+x^2$, hence there are no fixed vectors in the representation attached to $V_{i}$, so $V_{i}\cap E(\mathbb{Q}(t))=\{0\}$.
\end{proof}

Let $r_i$ denote the dimension of the $\mathbb{Q}$-space $V_{i}^{G}$. Note that $r_i$ is also the $\mathbb{Z}$-rank of the module $E_{A,B}(\overline{\mathbb{Q}}(t))\cap V_{i}^{G}$. Let $x=x'\cdot\square$ denote a number $x$ which is a product of $x'$ with a square in~$\mathbb{Q}^{\times}$.
\begin{proposition}[Analyse representation $V_{1}$]\label{prop:V1_special}
	Assume $B$ is a cube. Then one of the following conditions is true.
	\begin{itemize}
		\item[(i)] If $A\neq-3\times \square$, then $r_1=0$.
		\item[(ii)] If $A=\square$, then $r_1=1$ and $V_{1}^{G}=\langle P\rangle.$
		\item[(iii)] If $A=-3\times \square$, then $r_{1}=1$ and $V_{1}^{G}=\langle P+2P^{\sigma_{K}}\rangle.$
	\end{itemize}
\end{proposition}
\begin{proof}
	(i): the assumptions imply that there exists an automorphism $\theta$ in $\Gal(\mathbb{Q}(\sqrt{A},\zeta_{3})/\mathbb{Q})$ which satisfies $\theta(\sqrt{A})=\sqrt{A}$ and $\theta(\zeta_{3})=-\zeta_{3}$. Hence, in the given basis of $V_{1}$ it has matrix $\matr{1}{-1}{0}{-1}$ and an eigenvector $P\otimes 1$ with eigenvalue $1$. This vector is not fixed under the automorphism that sends $\sqrt{A}$ to $-\sqrt{A}$ and fixes $\zeta_{3}$. Hence $r_1=0$.
	
	(ii): In this case the automorphism $\theta$ again fixes only a $1$-dimensional subspace spanned by $P\otimes 1$ which belongs to $E(\mathbb{Q}(t))\otimes\mathbb{Q}$, hence $r_1=1$.
	
	(iii): In this case the action of $\theta$ in the basis of $V_1$ provides a matrix $\matr{-1}{1}{0}{1}$, hence it fixes the space spanned by the vector $(P+2P^{\sigma_{K}})\otimes 1$ which is rational, hence $r_1=1$.
\end{proof}
Propositions \ref{prop:suff_condition_Vi} and \ref{prop:V1_special} completely characterize the structure of the subpace $V_i^{G}$.

\begin{table}[htb]
	\begin{center}
		\begin{tabular}{  l | l | l |l}
			Point & Formula & Height & Orbit\\
			\hline
			$P$ & $(-\sqrt[3]{B},\sqrt{A} t^3)$ & 2& $O_1$\\
			$Q$ & $(-\sqrt[3]{A}t^2,\sqrt{B})$ & 2& $O_2$\\
			$R$ & $(\frac{2\sqrt{A}\sqrt{B}}{s}t,\sqrt{A}t^3+\sqrt{B})$ & 2& $O_3$\\
			$S$ & $(-\frac{2\sqrt{A}\sqrt{B}}{s}t,\sqrt{A}t^3-\sqrt{B})$ & 2& $O_3$\\
		\end{tabular}
		\caption{Points on the elliptic surface $\E:y^2=x^3+At^6+B$.}
		\label{tab:generators}
	\end{center}
\end{table}
\begin{proposition}[(Analyse representation $V_{2}$)]
	Assume $A$ is a cube. 
	\begin{itemize}
		\item[(i)] If $B\neq-3\times \square$, then $r_2=0$.
		\item[(i)] If $B= \square$, then $r_2=1$ and $V_{2}^{G}=\langle Q \rangle$.
		\item[(ii)] If $B=-3\times \square$, then $r_2=1$ and $V_{2}^{G}=\langle Q+2Q^{\sigma_{K'}} \rangle$.
	\end{itemize}
\end{proposition}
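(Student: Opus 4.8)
The plan is to carry over, with the roles of $A$ and $B$ interchanged, the argument of Proposition~\ref{prop:V1_special} to the two-dimensional $\mathbb{Q}[G]$-module $V_{2}=\myspan_{\mathbb{Q}[G]}\langle Q,Q^{\sigma_{K'}}\rangle$, where $Q=(-\sqrt[3]{A}\,t^{2},\sqrt{B})$. Since $A$ is a cube we have $\sqrt[3]{A}\in\mathbb{Q}$, so $K'=\mathbb{Q}(\sqrt{B},\zeta_{3})$, the symbol $Q^{\sigma_{K'}}$ is the point $(-\zeta_{3}\sqrt[3]{A}\,t^{2},\sqrt{B})$ (in the sense of the remark preceding $Q$), and $Q+Q^{\sigma_{K'}}+Q^{\sigma_{K'}^{2}}=0$. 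Thus $\{Q,Q^{\sigma_{K'}}\}$ is a $\mathbb{Q}$-basis of $V_{2}$ (its Gram matrix is the matrix $M$ of Proposition~\ref{prop:Point_P_properties}, which is nondegenerate), the whole Galois orbit of $Q$ lies in $E(K'(t))$, and the $G$-action on $V_{2}$ factors through $\Gal(K'/\mathbb{Q})$. The proof is then a representation-theoretic computation over this last group, carried out by substituting $-\sqrt{B}$ or $\zeta_{3}^{2}$ into the explicit coordinates of $Q$.

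For (i), when $B$ is neither a square nor $-3$ times a square, $[K':\mathbb{Q}]=4$; I would take the automorphism $\theta\in\Gal(K'/\mathbb{Q})$ with $\theta(\sqrt{B})=-\sqrt{B}$ and $\theta(\zeta_{3})=\zeta_{3}$. It fixes the $x$-coordinate of every point of the orbit and negates the constant $y$-coordinate, hence sends each orbit point $T$ to $-T$; so $\theta$ acts on $V_{2}$ as $-\mathrm{Id}$, no nonzero vector of $V_{2}$ is $G$-fixed, and $r_{2}=0$.

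For (ii), if $B=\square$ then $\sqrt{B}\in\mathbb{Q}$, so $Q$ is already defined over $\mathbb{Q}(t)$ and $r_{2}\geq 1$; to see $r_{2}=1$ I would use $\theta\in\Gal(K'/\mathbb{Q})$ with $\theta(\zeta_{3})=\zeta_{3}^{2}$, which fixes $Q$ but maps $Q^{\sigma_{K'}}\mapsto Q^{\sigma_{K'}^{2}}=-Q-Q^{\sigma_{K'}}$, i.e.\ acts by $\left(\begin{smallmatrix}1 & -1\\ 0 & -1\end{smallmatrix}\right)$ in the basis $\{Q,Q^{\sigma_{K'}}\}$; its fixed space is exactly $\langle Q\rangle$. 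For (iii), if $B=-3\times\square$ then $\sqrt{B}\in\mathbb{Q}(\zeta_{3})\setminus\mathbb{Q}$ because $\sqrt{-3}=1+2\zeta_{3}$, so $[K':\mathbb{Q}]=2$ and the nontrivial automorphism $\theta$ satisfies $\theta(\zeta_{3})=\zeta_{3}^{2}$ and $\theta(\sqrt{B})=-\sqrt{B}$ simultaneously; a short substitution gives $\theta(Q)=-Q$ and $\theta(Q^{\sigma_{K'}})=-Q^{\sigma_{K'}^{2}}=Q+Q^{\sigma_{K'}}$, so $\theta$ acts by $\left(\begin{smallmatrix}-1 & 1\\ 0 & 1\end{smallmatrix}\right)$, whose $(+1)$-eigenspace is spanned by $Q+2Q^{\sigma_{K'}}$; this is the rational generator, so $r_{2}=1$ and $V_{2}(\mathbb{Q}(t))=\langle Q+2Q^{\sigma_{K'}}\rangle$.

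There is essentially no obstacle here: the argument is literally that of Proposition~\ref{prop:V1_special} read through the symmetry $A\leftrightarrow B$, $P\leftrightarrow Q$, $K\leftrightarrow K'$, $O_{1}\leftrightarrow O_{2}$. The only point that needs a moment's care is distinguishing $r_{2}=1$ from $r_{2}=2$ in cases (ii) and (iii): one must exhibit an automorphism whose matrix on $V_{2}$ genuinely has an eigenvalue different from $1$ (here $-1$), rather than merely producing a single rational point. Since all the needed $G$-actions come from plugging $-\sqrt{B}$ or $\zeta_{3}^{2}$ into the coordinates of $Q$, the verification is mechanical, and (as in the neighbouring propositions) it can be cross-checked in the accompanying Magma code.
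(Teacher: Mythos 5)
Your proof is correct and is exactly what the paper intends: its own proof of this proposition is just ``analogous to Proposition \ref{prop:V1_special}'', and you carry out that analogy explicitly, computing the fixed subspaces of explicit elements of $\Gal(K'/\mathbb{Q})$ acting on the basis $\{Q,Q^{\sigma_{K'}}\}$. The only (harmless) variation is in case (i), where you use the automorphism $\sqrt{B}\mapsto-\sqrt{B}$, $\zeta_3\mapsto\zeta_3$ acting as $-\mathrm{Id}$ in one step, instead of the paper's two-step argument via the $\zeta_3$-conjugation followed by the quadratic twist.
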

\begin{proof}
	We omit the proof since it is analogous to that of Proposition \ref{prop:V1_special}.
\end{proof}

\begin{proposition}[(Analyse representation $V_{3}$)]
	Assume $4AB$ is a cube. 
	\begin{itemize}
		\item[(i)] If $A\neq-3\times \square$, then $r_3=0$.
		\item[(ii)] If $A=\square$, then $r_3=1$ and $V_{1}^{G}=\langle R+S\rangle.$
		\item[(iii)] If $A=-3\times \square$, then $r_{3}=1$ and $$V_{3}^{G}=\langle R+S+2(R^{\sigma_{L}}+S^{\sigma_{L}})\rangle.$$
	\end{itemize}
\end{proposition}
\begin{proof}
	We omit the proof since it is analogous to that of Proposition \ref{prop:V1_special}.
\end{proof}

\begin{proposition}[(Analyse representation $V_{4}$)]
	Assume $4AB$ is a cube. 
	\begin{itemize}
		\item[(i)] If $B\neq-3\times \square$, then $r_4=0$.
		\item[(ii)] If $B=\square$, then $r_4=1$ and $V_{4}^{G}=\langle R-S\rangle.$
		\item[(iii)] If $B=-3\times \square$, then $r_{4}=1$ and $V_{4}^{G}=\langle R-S+2(R^{\sigma_{L}}-S^{\sigma_{L}})\rangle.$
	\end{itemize}
\end{proposition}
\begin{proof}
	We omit the proof since it is analogous to that of Proposition \ref{prop:V1_special}.
\end{proof}

\subsection{Decision diagrams}\label{sec:Decision_diagram}
The results of Section \ref{sec:Galois_module_structure} are sufficient to conclude the value of the rank $r_{\mathcal{E}}$ of $E_{A,B}(\mathbb{Q}(t))$ based on the execution of the procedure which on input takes a pair of non-zero rational numbers $A,B$ and prints on the output the value $r=r_{\mathcal{E}}$ (indicated by a red rectangular box in the diagrams).

The starting point for the procedure is the diagram on Figure \ref{fig:Diagram_0}. Each diamond box is a query with possible yes or no answer. Blue circles with numbers $1,2,3$ denote the starting point of a subroutine explained on Figures \ref{fig:Subroutine_1}, \ref{fig:Subroutine_2} and \ref{fig:Subroutine_3}.

\begin{figure}
	\centering
	\begin{tikzpicture}[sibling distance=1em,level distance=2.5cm]
		\Tree [.\node[start]{START};
		[.\node[test]{$[\mathbb{Q}(\sqrt{A},\zeta_3):\mathbb{Q}]=4$}; 
		\edge node[ auto=right]{YES};
		[.\node[test]{$[\mathbb{Q}(\sqrt{B},\zeta_3):\mathbb{Q}]=4$};
		\edge node[auto=right]{YES};  
		\node[proc]{$r=0$}; 
		\edge node[auto=left]{NO};  
		\node[nmark]{1};
		]
		\edge node[ auto=left]{NO}; 
		[.\node[test]{$[\mathbb{Q}(\sqrt{B},\zeta_3):\mathbb{Q}]=4$}; 
		\edge node[auto=right]{YES};  
		\node[nmark]{2};
		\edge node[auto=left]{NO};  
		\node[nmark]{3};
		] 
		]
		]
		]
		
	\end{tikzpicture}
	\caption{Initial point of the procedure}\label{fig:Diagram_0}
\end{figure}
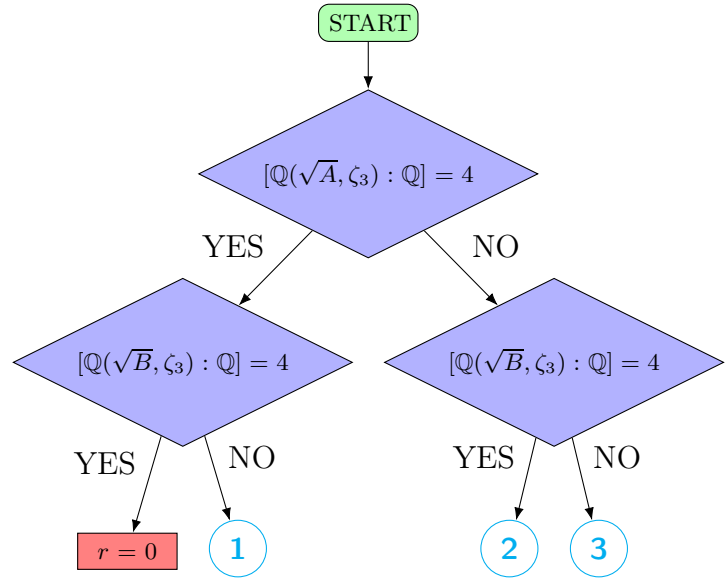

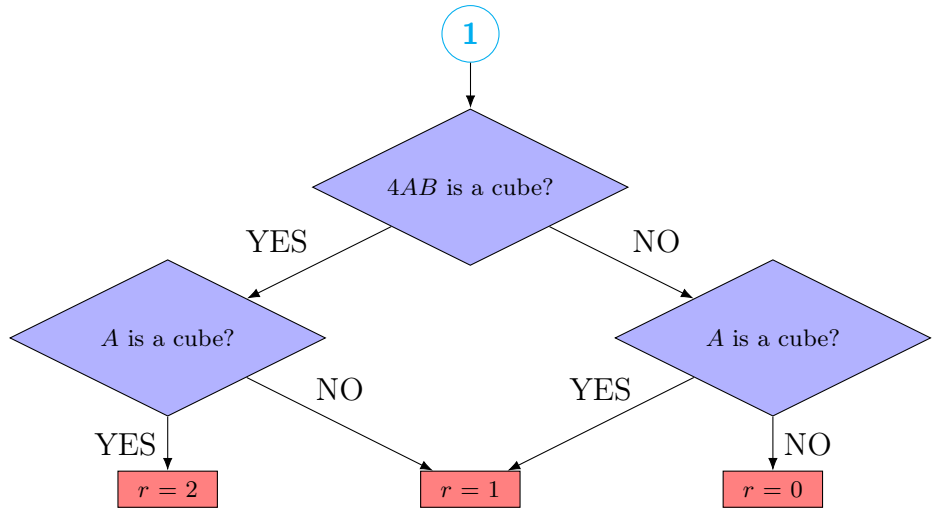
\begin{figure}
	\centering
	\begin{tikzpicture}[sibling distance=3em]
		\Tree 
		[.\node[nmark]{1};
		[.\node[test]{$4AB$ is a cube?}; 
		\edge node[ auto=right]{YES};
		[.\node(Ac)[test]{$A$ is a cube?};
		\edge node[auto=right]{YES};  
		\node[proc]{$r=2$}; 
		] 
		\edge[draw=none];
		[.{} 
		\edge[draw=none];	
		[.\node(a)[proc]{$r=1$}; ]
		]		
		\edge node[ auto=left]{NO}; 
		[.\node(Ac2)[test]{$A$ is a cube?}; 
		\edge node[auto=left]{NO};  
		\node[proc]{$r=0$}; 
		] 
		]
		]
		\draw[->,-Latex]  (Ac2)--(a) node [midway, above=1ex] {YES}; 
		\draw[->,-Latex]  (Ac)--(a) node [midway, above=1ex] {NO}; 
	\end{tikzpicture}
	\caption{Subroutine 1}\label{fig:Subroutine_1}
\end{figure}

\begin{figure}
	\centering
	\begin{tikzpicture}[sibling distance=3em]
		\Tree 
		[.\node[nmark]{2};
		[.\node[test]{$4AB$ is a cube?}; 
		\edge node[ auto=right]{YES};
		[.\node(Ac)[test]{$B$ is a cube?};
		\edge node[auto=right]{YES};  
		\node[proc]{$r=2$}; 
		] 
		\edge[draw=none];
		[.{} 
		\edge[draw=none];	
		[.\node(a)[proc]{$r=1$}; ]
		]		
		\edge node[ auto=left]{NO}; 
		[.\node(Ac2)[test]{$B$ is a cube?}; 
		\edge node[auto=left]{NO};  
		\node[proc]{$r=0$}; 
		] 
		]
		]
		\draw[->,-Latex]  (Ac2)--(a) node [midway, above=1ex] {YES}; 
		\draw[->,-Latex]  (Ac)--(a) node [midway, above=1ex] {NO}; 
	\end{tikzpicture}
	\caption{Subroutine 2}\label{fig:Subroutine_2}
\end{figure}
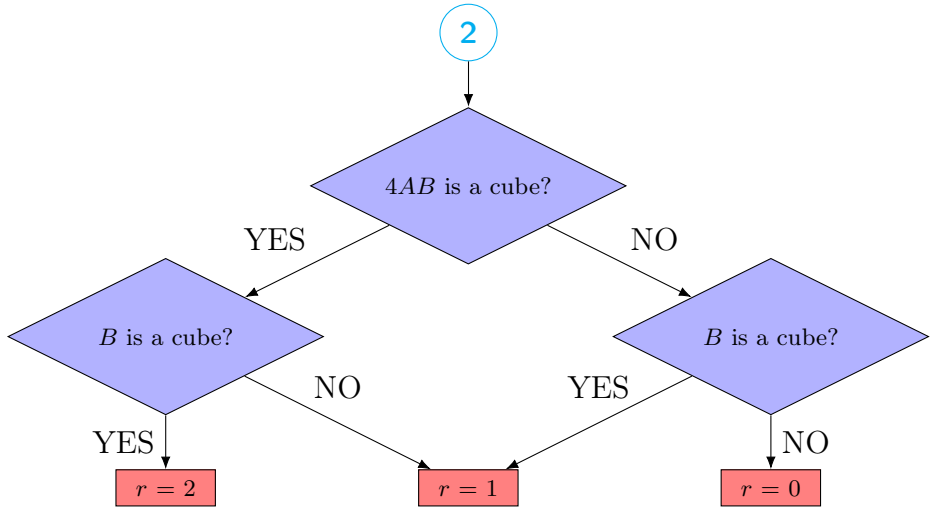

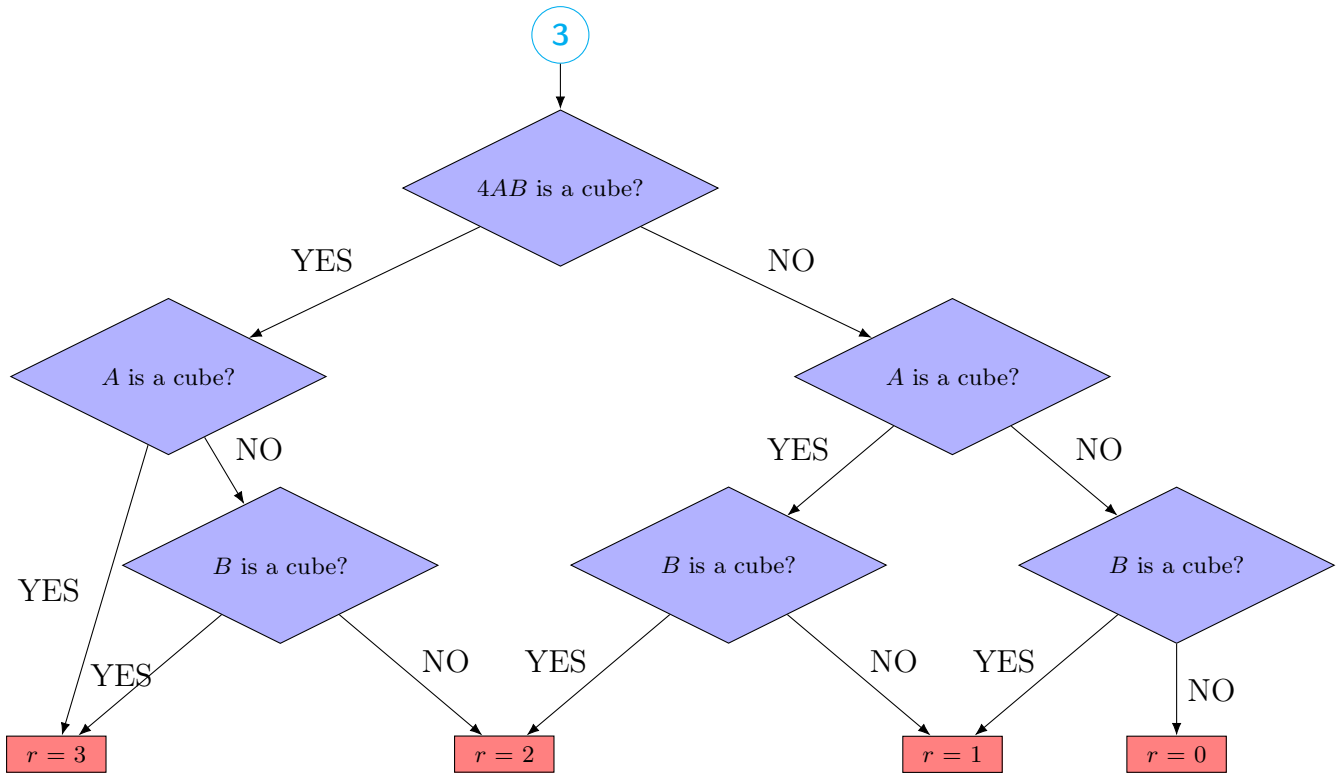
\begin{figure}
	\centering
	\begin{tikzpicture}[sibling distance=0.5em,level distance=2.5cm]
		\Tree [.\node[nmark]{3};
		[.\node[test]{$4AB$ is a cube?}; 
		\edge node[ auto=right]{YES};
		[.\node(Ac1)[test]{$A$ is a cube?};
		\edge[draw=none];
		[.{} 
		\edge[draw=none];
		[.\node(r3)[proc]{$r=3$}; ]
		]
		\edge node[auto=left]{NO};
		[.\node(Bc1)[test]{$B$ is a cube?}; 
		]
		]  
		\edge[draw=none];
		[.{}
		\edge[draw=none];
		[.{}
		\edge[draw=none]; 
		[.\node(r2)[proc]{$r=2$}; ]
		] 
		]
		\edge node[ auto=left]{NO}; 
		[.\node[test]{$A$ is a cube?}; 
		\edge node[auto=right]{YES};  
		[.\node(Bc2)[test]{$B$ is a cube?}; 
		]
		\edge[draw=none];
		[.{}
		\edge[draw=none]; 
		[.\node(r1)[proc]{$r=1$}; ] 
		]
		\edge node[auto=left]{NO};
		[.\node(Bc3)[test]{$B$ is a cube?}; 
		\edge node[auto=left]{NO};
		[.\node[proc]{$r=0$};] 
		] 
		] 
		]
		]
		\draw[->,-Latex]  (Bc2)--(r1) node [midway, above=1ex,right] {NO}; 
		\draw[->,-Latex]  (Bc3)--(r1) node [midway, above=1ex,left] {YES}; 
		\draw[->,-Latex]  (Bc2)--(r2) node [midway, above=1ex,left] {YES}; 
		\draw[->,-Latex]  (Bc1)--(r2) node [midway, above=1ex,right] {NO};
		\draw[->,-Latex]  (Ac1)--(r3) node [midway, left=1ex] {YES}; 
		\draw[->,-Latex]  (Bc1)--(r3) node [midway, above=1ex,left=-1ex] {YES}; 	
	\end{tikzpicture}
	\caption{Subroutine 3}\label{fig:Subroutine_3}
\end{figure}

\subsection{Rational basis}\label{sec:rational_basis}
We compute below the set of generators of the group $E_{A,B}(\mathbb{Q}(t))$ for each choice of non-zero rational numbers $A,B$. We follow the structure of the diagrams from Section \ref{sec:Decision_diagram}. We denote each paragraph with the label that corresponds to yes/no decisions made in the decision diagram (abbreviated as Y/N respectively) in order to reach the specific basis.

We use the following facts for the proofs below:
\begin{itemize}
	\item[$(\dagger)$] Height function is quadratic and the minimal height of the non-zero point in $E_{A,B}(\overline{\mathbb{Q}}(t))$ is $2$. There are no points of height $3$ in the group $E_{A,B}(\overline{\mathbb{Q}}(t))$. In particular there are no such points in the group $E_{A,B}(\mathbb{Q}(t))$.
	\item The span of points $\{e_{i}\}$ is not $m$-saturated in $E_{A,B}(\overline{\mathbb{Q}}(t))$ if and only if one can find a linear combination $\sum_{i} m_i e_i$ with $m_i\in\{0,\ldots,m-1\}$ which is $m$-divisible and non-zero.
	\item Let $G$ denote the Gram matrix of the span $S$ of points $\{e_{i}\}_{i=1}^{m}$ for some positive integer $m$. If $\det G\neq 0$, then it is an integer. If $S$ is not $m$-saturated, then $m^2$ divides $\det G$.
\end{itemize}

\begin{table}[htb]
	\begin{center}
		\begin{tabular}{  l | l | l }
			\hline
			Point & Formula & Height\\
			\hline
			$R+S$ & $\left(\frac{1}{t^2}\frac{B}{\sqrt[3]{4AB}},\frac{1}{t^3}\left(-\sqrt{A}t^6-\frac{B}{2\sqrt{A}}\right) \right)$ & 4 \\
			$R-S$ & $\left(t^4\frac{A}{\sqrt[3]{4AB}},-\frac{A}{2\sqrt{B}}t^6-\sqrt{B}\right)$ & 4\\
			$P+2P^\sigma$ & $(\frac{4A}{-3\sqrt[3]{B}}t^6-\sqrt[3]{B},\frac{8A\sqrt{-3A}}{-3^2B}t^9-3\sqrt{-3A}t^3)$ & 6\\
			$Q+2Q^\sigma$ & $\left(\frac{4B}{-3\sqrt[3]{A}t^4}-\sqrt[3]{A}t^2, \frac{8B\sqrt{-3B}}{-3^2At^6}-3\sqrt{-3B}\right)$ & 6\\
			$R+S+2(R^{\sigma}+S^{\sigma})$ & $\left(-\frac{16 A^2 t^{12}+16 A B t^6+B^2}{3 B  \sqrt[3]{4 A B}t^2},-\frac{\left(2 A t^6+B\right) \left(-32 A^2 t^{12}-32 A B t^6+B^2\right)}{6 B^2 \sqrt{-3 A} t^3}\right)$& 12 \\
			$R-S+2(R^{\sigma}-S^{\sigma})$ & $\left(-\frac{A^2 t^{12}+16 A B t^6+16 B^2}{3 A  \sqrt[3]{4 A B} t^8},-\frac{\left(A t^6+2 B\right) \left(A^2 t^{12}-32 A B t^6-32 B^2\right)}{6 A^2 \sqrt{-3 B} t^{12}}\right)$ & 12 \\
			\hline
		\end{tabular}
		\caption{\label{tab:further_points}Points on the elliptic curve $E_{A,B}:y^2=x^3+At^6+B$.
		}
	\end{center}
\end{table}

\newcommand{\Fext}[2]{[\mathbb{Q}(\sqrt{#1},\zeta_3):\mathbb{Q}]=#2}

\newcommand{\IsCube}[2]{#1 #2 \eta^3}
\renewcommand{\and}{\ \wedge\ }
\newcommand{\condbox}[2]{\fbox{\parbox{#1}{\centering #2}}}
\newcommand{\ratbasis}[4]{\begin{itemize}
		\item $r_{1} = #1$, $E_{A,B}(\overline{\mathbb{Q}}(t))\cap V_1^{G}$ basis: \ifnum#1=1 {$P$ or $P+2P^{\sigma}$} \else {empty}\fi
		\item $r_{2} = #2$, $E_{A,B}(\overline{\mathbb{Q}}(t))\cap V_2^{G}$ basis: \ifnum#2=1 {$Q$ or $Q+2Q^{\sigma}$} \else {empty}\fi
		\item $r_{3} = #3$, $E_{A,B}(\overline{\mathbb{Q}}(t))\cap V_3^{G}$ basis: \ifnum#3=1 {$R+S$ or $R+S+2(R+S)^{\sigma}$} \else {empty}\fi
		\item $r_{4} = #4$, $E_{A,B}(\overline{\mathbb{Q}}(t))\cap V_4^{G}$ basis: \ifnum#4=1 {$R-S$ or $R-S+2(R-S)^{\sigma}$} \else {empty}\fi
	\end{itemize}
}
\newcommand{\mydiag}[2]{
	\left(\begin{array}{cc} %
		#1 & 0\\ %
		0 & #2 %
	\end{array}\right)
}

\newcommand{\threediag}[3]{
	\left(\begin{array}{ccc} %
		#1 & 0 & 0\\ %
		0 & #2 & 0\\%
		0 & 0 & #3 %
	\end{array}\right)
}

\subsubsection{When $r_\E=1$}

In the following cases, the rank of $E_{A,B}(\mathbb{Q}(t))$ is $1$ and the union of the bases of the submodules $E_{A,B}(\overline{\mathbb{Q}}(t))\cap V_i^{G}$ gives the full basis of the Mordell-Weil lattice simply by the property $(\dagger)$. We give two choices for the basis, but in each case it can easily be determined: it is $P$ or $R+S$ (resp. $Q$ or $R-S$) if $A$ (resp. $B$) is a square, and $P+2P^\sigma$ or $R+S+2(R+S)^\sigma$ (resp. $Q+2Q^\sigma$ or $R-S+2(R-S)^\sigma$) if $A$ (resp. $B$) is $-3$ times a square.

\subsubsection*{YNYN}
\condbox{0.8\textwidth}{$\Fext{A}{4}\and\Fext{B}{2}\and\IsCube{4AB}{=}\and\IsCube{A}{\neq}$}
\ratbasis{0}{0}{0}{1}

\subsubsection*{YNNY}
\condbox{0.8\textwidth}{$\Fext{A}{4}\and\Fext{B}{2}\and\IsCube{4AB}{\neq}\and\IsCube{A}{=}$}
\ratbasis{0}{1}{0}{0}

\subsubsection*{NYNY}
\condbox{0.8\textwidth}{$\Fext{A}{2}\and\Fext{B}{4}\and\IsCube{4AB}{\neq}\and\IsCube{B}{=}$}
\ratbasis{1}{0}{0}{0}

\subsubsection*{NYYN}
\condbox{0.8\textwidth}{$\Fext{A}{2}\and\Fext{B}{4}\and\IsCube{4AB}{=}\and\IsCube{B}{\neq}$}
\ratbasis{0}{0}{1}{0}

\subsubsection*{NNNYN}
\condbox{0.8\textwidth}{$\Fext{A}{2}\and\Fext{B}{2}\and\IsCube{4AB}{\neq}\and\IsCube{A}{=}\and\IsCube{B}{\neq}$}
\ratbasis{0}{1}{0}{0}

\subsubsection*{NNNNY}
\condbox{0.8\textwidth}{$\Fext{A}{2}\and\Fext{B}{2}\and\IsCube{4AB}{\neq}\and\IsCube{A}{\neq}\and\IsCube{B}{=}$}
\ratbasis{1}{0}{0}{0}

\subsubsection{When $r_\E=2$}
In each case we have to verify whether the union of the bases of the submodules $E_{A,B}(\overline{\mathbb{Q}}(t))\cap V_{i}^{G}$ are saturated in $E_{A,B}(\mathbb{Q}(t))$. 
We compute for every such choice a basis of points with least heights and a lattice type corresponding to that basis. We denote by $\langle n\rangle$ a rank $1$ lattice with the height pairing $b(\cdot,\cdot)$ such that the generator $e$ of the lattice satisfies $b(e,e)=n$.  We denote by $A_2(2)$ a lattice of rank $2$ such that the Gram matrix of the height pairing has the form 
\[\left(\begin{array}{cc}
	4 & -2\\
	-2 & 4
\end{array}\right).\]
We denote by $\Lambda_1\oplus\Lambda_2$ the orthogonal sum of lattices $\Lambda_1$, $\Lambda_2$. We denote by $\diag (a_1,\ldots,a_n)$ an $n$ by $n$ diagonal matrix with entries $a_i$ on the diagonal, ordered from the top-left to the bottom-right.
\subsubsection*{YNYY}
\condbox{0.8\textwidth}{$\Fext{A}{4}\and\Fext{B}{2}\and\IsCube{4AB}{=}\and\IsCube{A}{=}$}
\ratbasis{0}{1}{0}{1}

--- $B=\square$: Points $Q$ and $R-S$ form a basis of the full Mordell-Weil group, since the lattice has discriminant $8$ and each non-zero point $\alpha Q+\beta (R-S)$ for $\alpha,\beta\in\{0,1\}$ has height in the set $\{2,4,6\}$.

\medskip
\noindent
\textbf{Minimal height basis:} $Q,R-S$

\noindent
\textbf{Lattice type:} $\langle 2\rangle\oplus \langle 4\rangle$

\medskip
--- $B=-3\times\square$: Point $Q+2Q^{\sigma}+(R-S)+2(R-S)^{\sigma}$ 
has height $18$ and is divisible by $3$, namely

$$Q+2Q^{\sigma}+(R-S)+2(R-S)^{\sigma} = -3V_{0,0,1}.$$
The Gram matrix of the pair $Q+2Q^{\sigma}, V_{0,0,1}$ is 

$$\left(\begin{array}{cc}
	6 & -2 \\
	-2 & 2
\end{array}\right)$$

Let $e_1=Q+2Q^{\sigma}$ and  $e_2=V_{0,0,1}$. The height of the non-zero $\alpha_1 e_1+\alpha_2 e_2$ where $\alpha_i\in \{0,1\}$ belongs to the set $\{2, 4, 6 \}$. Hence, no non-zero point of this form  is $2$-divisible. Because the Gram matrix has determinant $8$ it follows that $\{e_1,e_2\}$ is a basis of rational points $E_{A,B}(\mathbb{Q}(t))$.

\medskip
\noindent
\textbf{Minimal height basis:} $V_{0,0,1},Q+2Q^{\sigma}+V_{0,0,1}$

\noindent
\textbf{Lattice type:} $\langle 2\rangle\oplus \langle 4\rangle$

\subsubsection*{NYYY}
\condbox{0.8\textwidth}{$\Fext{A}{2}\and\Fext{B}{4}\and\IsCube{4AB}{=}\and\IsCube{B}{=}$}
\ratbasis{1}{0}{1}{0}

\noindent
--- $A=\square$: the basis of rational points consists of $P$ and $R+S$ since the basis is $2$ saturated and the Gram matrix is the diagonal matrix $\textrm{diag}(2,4)$.

\medskip
\noindent
\textbf{Minimal height basis:} $P,R+S$

\noindent
\textbf{Lattice type:} $\langle 2\rangle\oplus \langle 4\rangle$

\medskip
\noindent
--- $A=-3\times\square$: the points $P+2P^{\sigma}$ and $R+S+2(R+S)^{\sigma}$ form a lattice of index dividing $9$ in the full Mordell-Weil lattice $E_{A,B}(\mathbb{Q}(t))$. We have

$$-3U_{0,0,1} = P+2P^{\sigma}+R+S+2(R+S)^{\sigma}$$

\noindent
and the basis $e_1=P+2P^{\sigma}, e_2=U_{0,0,1}$ has the Gram matrix 

$$\left(\begin{array}{cc}
	6 & -2 \\
	-2 & 2
\end{array}\right).$$
The lattice spanned by $e_1$ and $e_2$ is $2$-saturated.

\medskip
\noindent
\textbf{Minimal height basis:} $U_{0,0,1},P+2P^{\sigma}+U_{0,0,1}$

\noindent
\textbf{Lattice type:} $\langle 2\rangle\oplus \langle 4\rangle$

\subsubsection*{NNYNN}
\condbox{0.8\textwidth}{$\Fext{A}{2}\and\Fext{B}{2}\and\IsCube{4AB}{=}\and\IsCube{A}{\neq}\and\IsCube{B}{\neq}$}
\ratbasis{0}{0}{1}{1}

--- $A=\square$, $B=\square$: the lattice formed by $R+S$ and $R-S$ is not $2$-saturated. Points $R, S$ form the basis of $E_{A,B}(\mathbb{Q}(t))$.

\medskip
\noindent
\textbf{Minimal height basis:} $R,S$

\noindent
\textbf{Lattice type:} $\langle 2\rangle\oplus \langle 2\rangle$

\medskip
--- $A=-3\times\square$, $B=\square$: the basis is $R+R^{\sigma}+S^{\sigma}, R-S$ with the Gram matrix 
$\left(\begin{array}{cc}
	4 & 2 \\
	2 & 4
\end{array}\right)$
of determinant $12$. To show this is a basis it is enough to check that the lattice is $2$-saturated.

\medskip
\noindent
\textbf{Minimal height basis:} $R+R^{\sigma}+S^{\sigma},-(R-S)$

\noindent
\textbf{Lattice type:} $A_2(2)$

\medskip
--- $A=\square$, $B=-3\times\square$:
the basis is $R+R^{\sigma}-S^{\sigma}, R+S$ with the Gram matrix 
$\left(\begin{array}{cc}
	4 & 2 \\
	2 & 4
\end{array}\right).$

\medskip
\noindent
\textbf{Minimal height basis:} $R+R^{\sigma}-S^{\sigma},-(R+S)$

\noindent
\textbf{Lattice type:} $A_2(2)$

\medskip
--- $A=-3\times\square$, $B=-3\times\square$:
sublattice of discriminant $144$ spanned by the points $R-S+2(R-S)^{\sigma},R+S+2(R+S)^{\sigma}$ has the Gram matrix $\diag(12,12)$.
Points $R+2R^{\sigma}, S+2S^{\sigma}$ with the Gram matrix $\diag(6,6)$ are linearly independent and they form a $2$ and $3$ saturated lattice, so they form a basis of the group $E_{A,B}(\mathbb{Q}(t))$.

\medskip
\noindent
\textbf{Minimal height basis:} $R+2R^{\sigma},S+2S^{\sigma}$

\noindent
\textbf{Lattice type:} $\langle 6\rangle \oplus \langle 6\rangle$

\subsubsection*{NNNYY}
\condbox{0.8\textwidth}{$\Fext{A}{2}\and\Fext{B}{2}\and\IsCube{4AB}{\neq}\and\IsCube{A}{=}\and\IsCube{B}{=}$}
\ratbasis{1}{1}{0}{0}

--- $A=\square$, $B=\square$: $P,Q$ form a lattice with the Gram matrix $\diag(2,2)$,
hence the lattice is saturated in the Mordell-Weil lattice $E_{A,B}(\mathbb{Q}(t))$.

\medskip
\noindent
\textbf{Minimal height basis:} $P,Q$

\noindent
\textbf{Lattice type:} $\langle 2\rangle \oplus \langle 2\rangle$

\medskip
--- $A=\square$, $B=-3\times\square$: $P$, $Q+2Q^{\sigma}$ form a lattice with the Gram matrix $\diag(2,6)$,
and this could a priori be not $2$-saturated. That would be equivalent to $P+Q+2Q^{\sigma}$ being $2$-divisible, or equivalently $P+Q$ being two divisible, but this point has height $4$, so cannot be $2$-divisible.

\medskip
\noindent
\textbf{Minimal height basis:} $P,Q+2Q^{\sigma}$

\noindent
\textbf{Lattice type:} $\langle 2\rangle \oplus \langle 6\rangle$

\medskip
--- $A=-3\times \square$, $B=\square$: the points $P+2P^{\sigma}$, $Q$ form a basis of the full Mordell-Weil lattice by the previous argument.

\medskip
\noindent
\textbf{Minimal height basis:} $Q,P+2P^{\sigma}$

\noindent
\textbf{Lattice type:} $\langle 2\rangle \oplus \langle 6\rangle$

\medskip
--- $A=-3\times\square$, $B=-3\times\square$: the points $P+2P^{\sigma}$ and $Q+2Q^{\sigma}$ form a lattice with the Gram matrix $\diag(6,6)$,
which is both $2$ and $3$ saturated.

\medskip
\noindent
\textbf{Minimal height basis:} $P+2P^{\sigma},Q+2Q^{\sigma}$

\noindent
\textbf{Lattice type:} $\langle 6\rangle \oplus \langle 6\rangle$
\subsubsection{When $r_\E=3$}

\subsubsection*{NNYY}
\condbox{0.8\textwidth}{$\Fext{A}{2}\and\Fext{B}{2}\and\IsCube{4AB}{=}\and\IsCube{A}{=}$}
\ratbasis{0}{1}{1}{1}

\medskip
---  $A=\square$, $B=\square$: the Gram matrix for the triple $Q$, $R-S$, $R+S$ is $\diag(2,4,4)$ and the points form a finite index subgroup in $E_{A,B}(\mathbb{Q}(t))$.
The points $Q,R,S$ have the Gram matrix $2I_{3}$ for the identity matrix $I_{3}$ of dimension $3$. Hence the lattice they span is $2$-saturated.

\medskip
\noindent
\textbf{Minimal height basis:} $Q, R, S$.

\noindent
\textbf{Lattice type:} $\langle 2\rangle\oplus\langle 2\rangle\oplus\langle 2\rangle$

\medskip
--- $A=-3\times\square$, $B=\square$: the triple $Q$, $R+S+2(R+S)^{\sigma}$, $R-S$ forms a finite index subgroup in $E_{A,B}(\mathbb{Q}(t))$. The Gram matrix of the triple equals $\diag(2,12,4)$.
We consider a $2$-saturation of this lattice. The new lattice has basis $e_1=Q$, $e_2=R+(R+S)^{\sigma}$, $e_3=-(R-S)$ with the Gram matrix
$$\left(\begin{array}{ccc}
	2  & 0 &0\\
	0  & 4 & -2\\
	0 & -2 &  4\\
\end{array}\right)$$
with determinant $24$. Any non-zero point of the form $\sum_{i=1}^{3}\alpha_i e_i$ for $\alpha_i\in\{0,1\}$ has a height in the set $\{2, 4, 6, 12, 14\}$, hence no non-zero point of this form is further $2$-divisible in this lattice. 

\medskip
\noindent
\textbf{Minimal height basis:} $Q, R+(R+S)^{\sigma}, -(R-S)$.

\noindent
\textbf{Lattice type:} $\langle 2\rangle\oplus A_2(2)$.

\medskip
--- $A=\square$, $B=-3\times\square$:
points $Q+2Q^{\sigma}, R+S,R-S+2(R-S)^{\sigma}$ span a finite index sublattice in $E_{A,B}(\mathbb{Q}(t))$ with the Gram matrix $\diag(6,4,12)$.
We have a relation
$$-3V_{0,0,1}=Q+2Q^{\sigma}+R-S+2(R-S)^{\sigma}$$

\noindent
and the points $Q+2Q^{\sigma}, R+S, V_{0,0,1}$ span a lattice of discriminant $32$. This lattice is generated by $e_1=V_{0,0,1},e_2=Q+2Q^{\sigma}+V_{0,0,1}$ and $e_3=R+S$
with the Gram matrix $\diag(2,4,4)$.
Finally, we check that $e_2+e_3=2W_{0,2,0,1}$ and $Q+2Q^{\sigma}-W_{0,2,0,1}+V_{0,0,1}=W_{0,2,1,1}$ and the points $W_{0,2,1,1},W_{0,2,0,1}$ and $V_{0,0,1}$ form a lattice with the Gram matrix $2 I_{3}$ where $I_3$ is the 3-dimensional identity matrix. Further $2$-divisibility is not possible.

\medskip
\noindent
\textbf{Minimal height basis:} $W_{0,2,1,1},W_{0,2,0,1}, V_{0,0,1}$

\noindent
\textbf{Lattice type:} $\langle 2\rangle\oplus\langle 2\rangle\oplus\langle 2\rangle$

\medskip
--- $A=-3\times\square$, $B=-3\times\square$:
the points $Q+2Q^{\sigma}, R+S+2(R+S)^{\sigma},R-S+2(R-S)^{\sigma}$ span a finite index sublattice with the Gram matrix $\diag(6,12,12)$. 
We find the following linear relations:
$$Q+2Q^{\sigma}+(R-S+2(R-S)^{\sigma})= -3V_{0,0,1}$$
and
$$Q+2Q^{\sigma}+R+S+2(R+S)^{\sigma}+V_{0,0,1} = 2(W_{0,2,0,1}+W_{2,1,0,1}+W_{2,1,1,0}).$$

We check that the Gram matrix of the basis $e_1=Q+2Q^{\sigma}, e_2=W_{0,2,0,1}+W_{2,1,0,1}+W_{2,1,1,0}, e_3=V_{0,0,1}$ equals

$$\left(\begin{array}{ccc}
	6  & 2 &-2\\
	2  & 4 & 0\\
	-2 & 0 &  2\\
\end{array}\right).$$

The matrix defined above has determinant $24$ and we check that non-zero points of the form $\sum_{i=1}^{3}\alpha_i e_i$ such that $\alpha_{i}\in\{0,1\}$ have heights in the set $\{ 2, 4, 6, 12, 14\}$. There are no points of height $3$ in $E_{A,B}(\overline{\mathbb{Q}}(t))$, hence it follows that the basis $\{e_1,e_2,e_3\}$ is $2$-saturated and thus spans the group $E_{A,B}(\mathbb{Q}(t))$.

Let $e_1'=e_3$,$e_2'=e_1+e_3$, $e_3'=-e_2$. This basis has the Gram matrix
$$\left(\begin{array}{ccc}
	2  & 0 &0\\
	0 & 4 & -2\\
	0 & -2 &  4\\
\end{array}\right).$$

\medskip
\noindent 
\textbf{Minimal heights basis:} $V_{0,0,1}, Q+2Q^{\sigma}+V_{0,0,1}, -(W_{0,2,0,1}+W_{2,1,0,1}+W_{2,1,1,0})$

\noindent
\textbf{Lattice type:} $\langle 2\rangle\oplus A_2(2)$.
\subsubsection*{NNYNY}
\condbox{0.8\textwidth}{$\Fext{A}{2}\and\Fext{B}{2}\and\IsCube{4AB}{=}\and\IsCube{A}{\neq}\and\IsCube{B}{=}$}
\ratbasis{1}{0}{1}{1}

--- $A=\square$, $B=\square$:
finite index subgroup spanned by: $P$, $R+S, R-S$

\medskip
\noindent
\textbf{Minimal height basis:} $P, R, S$.

\noindent
\textbf{Lattice type:} $\langle 2\rangle\oplus\langle 2\rangle\oplus\langle 2\rangle$

\medskip

--- $A=\square$, $B=-3\times \square$:
finite index subgroup spanned by: $P$, $R+S$, $R-S+2(R-S)^{\sigma}$

\medskip
\noindent
\textbf{Minimal height basis:} $P,R+S,-(R+(R-S)^{\sigma})$

\noindent
\textbf{Lattice type:} $\langle 2\rangle\oplus A_2(2)$.

\medskip
--- $A=-3\times\square$, $B=\square$:
finite index subgroup spanned by: $P+2P^{\sigma}$, $R+S+2(R+S)^{\sigma}$, $R-S$ with diagonal the Gram matrix $\diag(6,12,4)$.
After $2$ and $3$-saturation we obtain a basis  $P+2P^{\sigma}, X_{1,2,0,1}, R-S$ with the Gram matrix
$$\left(\begin{array}{ccc}
	6 &  2 &  0\\
	2 &  2 &-2\\
	0 & -2 & 4	
\end{array}\right)$$

Equivalent basis (points of height $2$ only): $e_1=P+2P^{\sigma}-2X_{1,2,0,1}-(R-S)$, $e_2=X_{1,2,0,1}$, $e_3=X_{1,2,0,1}+R-S$ with the Gram matrix $\diag(2,2,2)$.
In fact $e_1=U_{0,0,2}$ and $e_3=X_{1, 2, 1, 0}$.

\medskip
\noindent
\textbf{Minimal height basis:} $U_{0,0,2}$, $X_{1,2,0,1}$, $X_{1, 2, 1, 0}$

\noindent
\textbf{Lattice type:} $\langle 2\rangle\oplus\langle 2\rangle\oplus\langle 2\rangle$

\medskip
--- $A=-3\times\square$, $B=-3\times \square$:
finite index subgroup spanned by: $P+2P^{\sigma}, R+S+2(R+S)^{\sigma}, R-S+2(R-S)^{\sigma}$ which has the Gram matrix $\diag(6,12,12)$.  Points $P+2P^{\sigma}$, $R+2R^{\sigma}$ and $S+2S^{\sigma}$ form an overlattice of the previous one with the Gram matrix $\diag(6,6,6)$. Next we observe that
\[-3U_{0,0,1}=P+2P^{\sigma}+R+2R^{\sigma}+S+2S^{\sigma}.\]
So, we find the following basis for the Mordell-Weil subgroup over $\mathbb{Q}(t)$:
\[U_{0,0,1},-U_{0,0,1}-(S+2S^{\sigma}),2U_{0,0,1}+(R+2R^{\sigma})+(S+2S^{\sigma})\]
with the Gram matrix 
$$\left(\begin{array}{ccc}
	2  & 0 &0\\
	0 & 4 & -2\\
	0 & -2 &  4\\
\end{array}\right).$$

\medskip
\noindent
\textbf{Minimal height basis:} $U_{0,0,1},-U_{0,0,1}-(S+2S^{\sigma}),2U_{0,0,1}+(R+2R^{\sigma})+(S+2S^{\sigma})$

\noindent
\textbf{Lattice type:} $\langle 2\rangle\oplus A_2(2)$.

\section{Density on $\mathcal{E}_{A,B}$ with generic rank 0}\label{sec:density_gen_rank_0}

As we have seen in Section \ref{section:previousapproaches}, there are many elliptic surfaces $\mathcal{E}_{A,B}$ with the generic fibre of the form $E_{t}:y^2=x^3+At^6+B$ such that the generic rank is $r_\E=0$:
for those there exists $a,b,c\in\Z$ such that $a,b$ are coprime and such that $A=3a^2c$, $B=cb^2$ and we do NOT have one of the following:
\begin{itemize}
	\item  if $c$ or $-3c$ (resp. $3c$ or $-c$) is a square and $12(abc)^2$ and $3a^2c$ (resp. $12(abc)^2$ and $b^2c$) are cubes (else the generic rank $r_{\E}$ is $2$);
	\item  if $c$ or $-3c$ (resp. $3c$ or $-c$) is a square and either $4AB$ or $A$ (resp. $4AB$ or $B$) are cubes(else $r_{\E}$ is $1$).
\end{itemize}

In the cases where we moreover have $W(E_t)=+1$ on every fibre, then the parity conjecture implies that the rank of the fibres are all even - possibly zero. Using \cite[Lemma A.1. and A.2.]{Desjardins2} and the corresponding tables of values, one can easily determine if a particular elliptic surface $\E_{3ca^2,cb^2}$ has this property. Listing all the cases where we have $r_\E=0$ and $W(\E_t)=+1$ is a tedious task, and we decided to omit it in this paper. This section will focus on giving an example of an alternate proof of the Zariski-density of the rational points that does not involve the generic rank nor the root number.

The following example could already be found in \cite{VA}, based on \cite[Sections 5,6,7]{VA2}: the generic rank is $r_\E=0$ and the root number is constant and equal to $+1$:
\begin{example}
	The elliptic surface $\E_{6\cdot27,6}$ given by the equation $E_{t}:y^2=x^3+6(27t^6+1)$ has as well the property that $W(E_t)=+1$ for all $t\in\P^1(\Q)$. It follows from Theorem \ref{thmdescription} that there are no non-zero $\mathbb{Q}(t)$-rational points in $E_{6\cdot27,6}(\mathbb{Q}(t))$. In this case, our theorem is not sufficient to prove the Zariski density of the rational points on $\E_{6\cdot27,6}$.
	
	It is possible to prove it in a totally different way. Note that the construction of the multisection in \cite{BulthuisVanLuijk} fails to work, because of the difficulty of finding a torsion point on the fibres of the surface. However, Rosa Winter and the first author \cite{DW} construct the following multisection. 
	
	On the surface $\E_{6\cdot27,6}$ we find the following algebraic curve:
	\[\begin{split}
		C: x^3 - \frac{131769}{2704}x^2t^2 + \frac{936903}{1352}xt^4 - \frac{1089}{1352}xt -
		\frac{6223513}{2704}t^6\\
		+ \frac{7743}{1352}t^3 + \frac{16215}{2704}=0.\end{split}\]
	This is a singular curve of genus 1, with a double singularity at the point $[x_0,y_0,t_0]=[22,104,1]$. The desingularisation $\tilde{C}$ of the curve $C$ has a non-singular point $[\frac{12793}{2704},-\frac{2327053}{140608},1]$, thus $\tilde{C}$ is an elliptic curve.
	We observe that $C$ is a multisection (passing through each fibre exactly 3 times), and that $\tilde{C}$ has rank 3 (infinitely many rational points), and moreover that $\E_{6\cdot27,6}$ is the blow-up of a del Pezzo surface of degree 1 (no torsion section). So we can apply \cite[Thm 6.4]{SVL} to conclude the Zariski density of the rational points on $\E_{6\cdot27,6}$.
\end{example} 

\subsection*{Acknowledgments}
We thank Marc Hindry for his suggestion of the proof of Proposition \ref{prop:poly_embed} and for his valuable comments on the preliminary version of this paper. We thank the anonymous referee for valuable suggestions which improved the quality of the paper. We also thank Dino Festi, Ronald van Luijk and Rosa Winter for their helpful comments and suggestions which improved the exposition of this paper. We are grateful to the Hausdorff Center Mathematics in Bonn for the excellent working conditions in April 2018 and to the organizers of the workshop ''Arithmetic of Hyperelliptic Curves'' in September 2017 during which we have initiated our project. The second author is grateful to the University of Bristol for providing us with
the access to Magma cluster CREAM.

BN acknowledges the support by Dioscuri program initiated by the Max Planck Society, jointly managed with the National Science Centre (Poland), and mutually funded by the Polish Ministry of Science and Higher Education and the German Federal Ministry of Education and Research.

JD is partially supported by an NSERC discovery grant.

\end{document}